\def\be#1{\begin{equation}\label{#1}}
\def\bas{\begin{align*}}
\def\eas{\end{align*}}
\def\bi{\begin{itemize}}
\def\ei{\end{itemize}}
\DeclareFontFamily{OT1}{pzc}{}
\DeclareFontShape{OT1}{pzc}{m}{it}{<-> s * [1.10] pzcmi7t}{}
\DeclareMathAlphabet{\mathpzc}{OT1}{pzc}{m}{it}
\theoremstyle{plain}
   \newtheorem{theorem}[subsection]{Theorem}
   \newtheorem*{theorem*}{Theorem}
   \newtheorem{definition}[subsection]{Definition}   
   \newtheorem{remark}[subsection]{Remark}
   \newtheorem{proposition}[subsection]{Proposition}
   \newtheorem*{proposition*}{Proposition}
   \newtheorem{lemma}[subsection]{Lemma}
\newcommand{\RR}{\mathbb{R}}
\newcommand{\CC}{\mathbb{C}}
\newcommand{\ZZ}{\mathbb{Z}}
\renewcommand{\vec}[1]{\underline{#1}}
\DeclareMathOperator{\Span}{span}
\author{Reuben Wheeler}
\address{Maxwell Institute of Mathematical Sciences and the School of Mathematics, University of Edinburgh, JCMB, The King's Buildings, Peter Guthrie Tait Road, Edinburgh, EH9 3FD, Scotland}
\email{reuben.wheeler@ed.ac.uk}
\title{Root structures of polynomials with sparse exponents}
\begin{document}
\maketitle
\begin{abstract}For real polynomials with (sparse) exponents in some fixed set, \[ \Psi(t)=x+y_1t^{k_1}+\ldots +y_L t^{k_L}, \] we analyse the types of root structures that might occur as the coefficients vary. We first establish a stratification of roots into tiers, each containing roots of comparable sizes. We then show that there exists a suitable small parameter $\epsilon>0$ such that, for any root $w\in \mathbb{C}$, $B(w,\epsilon|w|)$ contains \emph{at most} $L$ roots, counted with multiplicity. Our analysis suggests the consideration of a rough factorisation of the original polynomial and we establish the closeness of the corresponding root structures: there exists a covering of the roots by balls wherein a) each ball contains the same number of roots of the original polynomial and of its rough factorisation and b) the balls are strongly separated.\end{abstract}

\section{Introduction}
Methods for finding solutions to polynomial equations have a storied history in mathematics, leading to the extension of number systems, the development of algebra, and being of particular importance in the development of numerical analysis \cite{goldstineHist}. Before taking stock of more contemporary developments, let us recall some points in that history most relevant to our analysis. On the theoretical side, we have the fundamental theorem of algebra, which tells us that every degree $n$ complex polynomial has $n$ roots. Nevertheless, Abel and Ruffini established that there exist polynomials of degree $5$ whose solution could not directly expressed in terms of radicals. The work of Galois later established under what conditions all the solutions to polynomial equations could be finitely expressed in terms of radicals \cite{rigGalois}. Despite this, numerical tools still allow us to approximate the solutions to polynomial equations to arbitrary degrees of accuracy.

In harmonic analysis, it is useful even to have rough picture of root structure. We may be interested in the structure of the roots of real polynomials or, indeed, the roots of polynomials over non-Archimedean fields (in a suitable field extension). Such characterisations of root structure can be used to estimate the associated sublevel sets or to bound corresponding oscillatory integrals via the bounds of Phong and Stein \cite{stePho}. 

The analysis contained herein is inspired by work of Kowalski and Wright, \cite{kowWri}, and an unpublished oscillatory integral estimate of Hickman and Wright \cite{wrightHickCom}, Theorem \ref{thm:oscIntEst}. It takes the perspective of root clusters, in the spirit of the famous oscillatory integral estimates of Phong and Stein. We find this to be a timely moment to make these results, which appeared as part of the author's thesis \cite{mythesis}, public, as the authors Hickman and Wright have recently shared work utilising a similar approach \cite{hickWriLP}. Their presentation is focused on the non-Archimedean context, but the broader applicability of their arguments is indicated. Though our analysis is carried out over $\RR$, it is likely that the proofs contained herein are valid over other fields. Indeed, the proofs are likely cleaner in the non-Archimedean setting as is the case in \cite{kowWri} and the recent \cite{hickWriLP}.

 Before preparing to state our main results, let us note of the important connections that have been found with the polynomial root finding literature. Perhaps the most significant are to be found in works by Sch\"onhage \cite{schonhage} and Bini \cite{bini96}, which we discuss momentarily. Also of note is the fact that root finding algorithms must carefully account for the ways in which roots can come close or overlap, which, for example, can cause a break-down in a first order Newton-Raphson iteration. Much of this literature is concerned with the real or complex case and makes use of tools fitted to this context such as Rouch\'e's theorem. Our work makes use of basic metric properties of $\CC$ and its valuation and is thus likely applicable in other fields. Not all complex root-finding techniques make use of tools specialised to that context: there is, for example, an early work by van Vleck \cite{vanVleck} which considers polynomials with sparse exponents and works via the consideration of the vanishing symmetric functions of roots, as we do. Readers interested in a more comprehensive perspective on related ideas in polynomial root finding are directed to, for example, \cite{panHistory} and \cite{mardenGeom}. Following this history, there are now highly efficient procedures for complex polynomial root finding \cite{panNearOptimal}.
 
The approximate factorisation method of Sch\"onhage \cite{schonhage} is an effectively parallelisable algorithm, which can be useful for a localised root finding. Nevertheless, it involves the calculation of complex residues via a costly integration procedure. Our later rough factorisation theorem, which is directly expressed in terms of the polynomial coefficients, may provide a useful initialisation for this method (or, indeed, other root finding algorithms). 

The root finding approach of Bini \cite{bini96} makes use of suitably separated root collections determined via the use of height estimates. Our more explicit tools for determining the root structure of for polynomials with sparse exponents, which appear later in our analysis, makes use of these same height estimates. These height estimates can help us to determine in which annular regions our different collections of roots lie. 
 
In the case of sparse polynomials, our results may provide some useful information for root finding algorithms narrowing the scope of the pathological cases wherein roots coalesce. Indeed, algorithms to identify appropriate root clusters are the subject of contemporary research \cite{imbachPan} \cite{beckerEtAl}. On the one hand, the results of this paper bound the size of root clusters that might need to be considered. On the other hand, the root cell covering appearing arising in our rough factorisation analysis may be viewed as the construction of appropriate root clusters.
 
To prepare the statement of our main theorem, let us now consider a couple of representative examples. Consider the polynomial \[\Psi_1(t)=x+y_1t+y_2t^2+\ldots y_Lt^L,\]
with $x,y_L\neq 0$. We know, by the fundamental theorem of algebra, that $\Psi_1$ has $L$ roots, counted with multiplicity. In particular, for some small $\epsilon>0$, we know that, for any root $w$, at most $L$ roots are contained in $B(w,\epsilon |w|)$, which is, of course, trivial. Now consider the polynomial
\[\Psi_k(t)=\Psi_1(t^k).\]
Corresponding to each root of $w$ of $\Psi_1$, we see that there are $k$ roots of $\Psi_k$, these are the $k$th roots of $w$. As before, we also have that, for some small $\epsilon>0$, and for any root $w$, at most $L$ roots are contained in $B(w,\epsilon |w|)$. See Figures \ref{fig:psi1Roots} and \ref{fig:psikRoots} for a sketch of the roots of $\Psi_1$ and $\Psi_k$, respectively, in the case that $L=4$, $k=5$, and \[\Psi_1(t)=y_L(t-h_1)(t-h_2)(t-h_3)(t+h_4).\]
More specifically, these figures are sketches of root structure in the case where $h_1\gg h_2\approx h_3\gg h_4>0$. The dotted black circle corresponds to roots with modulus $h_1$. The solid blue lines correspond to roots with modulus close to $h_2$. The dashed red line corresponds with roots of modulus $h_4$. However,  for any choice $h_j$ one can see we have the following. For some suitable small $\epsilon>0$ and for any root $w'$ of $\Psi_k$, there are at most $L=4$ roots contained in $B(w',\epsilon |w'|)$.\footnote{In fact, for the sketched case, due to the indicated strong separation of root heights $h_1\gg h_2$ and $h_3\gg h_4$, we have at most $L=2$ roots contained in such a ball. This is suggestive of refinements that can be made to our structural statements under additional restrictions on the polynomial coefficients.} This particular example reflects polynomial root structure more generally, as we outline in Theorem \ref{thm:tierStruc}. 
\begin{figure}[!htb]
   \begin{minipage}{0.49\textwidth}
     \centering
     \includegraphics[width=.95\linewidth]{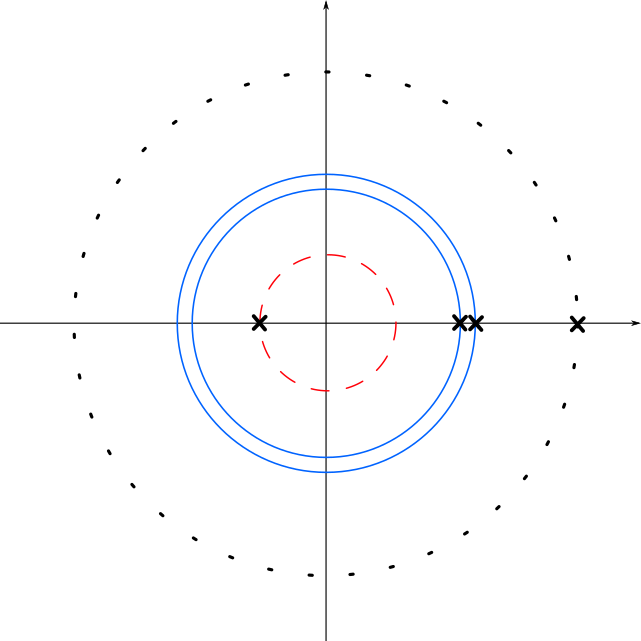}
     \caption{The roots of $\Psi_1$.}
     \label{fig:psi1Roots}
   \end{minipage}\hfill
   \begin{minipage}{0.49\textwidth}
     \centering
     \includegraphics[width=.95\linewidth]{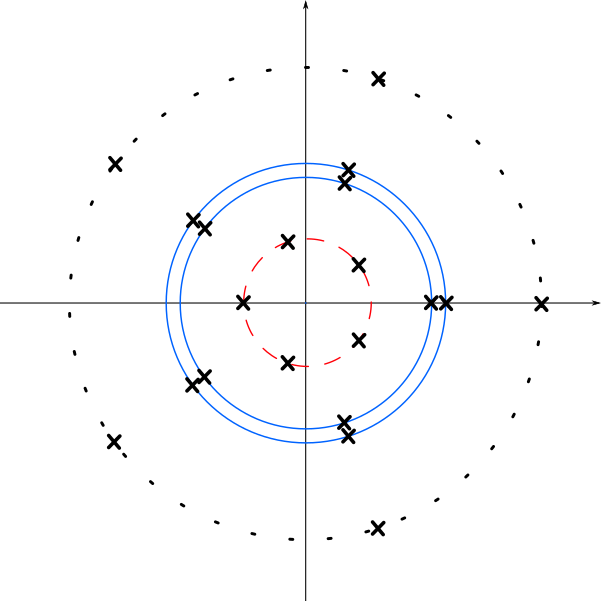}
     \caption{The roots of $\Psi_k$.}
     \label{fig:psikRoots}
   \end{minipage}
\end{figure}

\begin{theorem}\label{thm:tierStruc}
We fix a set of exponents $0=k_0< k_1<k_2<\ldots<k_L$ and consider real polynomials whose exponents are drawn from this set. For any real polynomial $\Psi(t)=x+y_1t^{k_1}+y_2t^{k_2}+\ldots+y_Lt^{k_L}$ with $x,y_L\neq 0$, we have the following structure.
 
The roots of $\Psi$ are stratified into $s$ tiers of roots $\mathcal{T}_1,\mathcal{T}_2,\ldots ,\mathcal{T}_s$, for some suitable $1\leq s \leq L$. The tiers are separated in the sense that, if we take $w_{i}\in \mathcal{T}_i$, then 
\[|w_{1}|\gg |w_{2}|\gg\ldots \gg |w_{s}|.\] 
 
For a suitable $1\leq L(\mathcal{T}_r)\leq L$, at most $L(\mathcal{T}_r)$ roots can cluster about a root: there is some suitable small parameter $\epsilon$ such that, for all $w\in\mathcal{T}_{r}$, there are at most $L(\mathcal{T}_r)$ roots of $\Psi$ in $B(w,\epsilon|w|)$, counted with multiplicity.
\end{theorem}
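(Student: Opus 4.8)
The plan is to organise both conclusions around the Newton polygon of $\Psi$, to reduce the clustering bound to a single-scale sub-problem, and to settle that sub-problem by a compactness argument whose only genuinely analytic ingredient is an Euler-operator bound on root multiplicities. First I would construct the tiers. Plot the points $(k_j,-\log|y_j|)$ for $j=0,\dots,L$ (with $y_0=x$) and form the lower convex hull; with $L+1$ data points it has at most $L$ edges, a root of $\Psi$ has modulus $e^{\sigma}$ up to a constant depending only on $\{k_j\}$ as $\sigma$ ranges over the edge slopes, and the horizontal extent of an edge counts the roots of that modulus. Fixing a threshold $B=B(\{k_j\})$ and amalgamating consecutive edges whose slopes differ by at most $B$ produces $s\le L$ blocks of hull vertices $k_{a_r}<\dots<k_{b_r}$; these index the tiers $\mathcal T_1,\dots,\mathcal T_s$, ordered so that successive scales differ by a factor $\ge e^{B}$ — this is the meaning of the $\gg$'s. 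To $\mathcal T_r$ I attach the block polynomial $\Psi_r(t)=\sum_{j=a_r}^{b_r}y_jt^{k_j}=t^{k_{a_r}}\widehat\Psi_r(t)$, where $\widehat\Psi_r$ has nonzero constant and leading coefficients $y_{a_r},y_{b_r}$, degree $k_{b_r}-k_{a_r}$, and $m_r:=b_r-a_r$ non-constant exponents; the blocks partition $\{0,1,\dots,L\}$ along the hull, so $\sum_r m_r=L$, hence every $m_r\le L$, and I set $L(\mathcal T_r)=m_r$. After the substitution $t\mapsto\rho_r t$ calibrated to the tier's scale and an overall rescaling, $\widehat\Psi_r$ is normalised so that its constant and leading coefficients have modulus $1$, all its coefficients are bounded by $C=C(\{k_j\},B)$, and all its roots lie in a fixed annulus; those roots are exactly the (rescaled) tier-$\mathcal T_r$ roots of $\Psi$.

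The reduction is then: on circles $|t|\asymp\rho_r$ the off-block part $\Psi-\Psi_r$ is smaller than $\Psi_r$ by a fixed negative power of $e^{B}$, so a Rouch\'e comparison on a suitably chosen circle $|t-w'|\asymp\epsilon|w'|$ — radius picked to stay away from the roots of $\Psi_r$ — shows $\Psi$ and $\Psi_r$ have the same number of roots there, counted with multiplicity. Hence membership in a tier and the count inside $B(w',\epsilon|w'|)$ both transfer to the normalised block polynomial, and it remains to prove: there is $\epsilon_0>0$ so that a normalised block polynomial $\Phi(t)=\sum_j c_jt^{e_j}$ (with $e_0=0$, $|c_0|=|c_m|=1$, $|c_j|\le C$) has at most $m$ roots, counted with multiplicity, in any disk $B(\zeta,\epsilon_0)$ about a root $\zeta$.

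For this I would argue by compactness. The admissible coefficient vectors form a compact set and the roots stay in a fixed annulus, so a failure would give normalised block polynomials $\Phi^{(\ell)}\to\Phi^{(\infty)}$ and roots $w^{(\ell)}\to w^{(\infty)}\ne 0$ with $B(w^{(\ell)},1/\ell)$ containing at least $m+1$ roots; by continuity of roots $\Phi^{(\infty)}$ then has a zero of order $\ge m+1$ at $w^{(\infty)}\ne 0$. But $\Phi^{(\infty)}$ has nonzero constant and leading terms and at most $m+1$ nonzero terms, and such a polynomial has no nonzero zero of order exceeding its number of non-constant terms: applying the Euler operator $t\,d/dt$, the polynomials $\Phi,(t\,d/dt)\Phi,\dots,(t\,d/dt)^{m}\Phi$ are obtained from the monomials $c_jt^{e_j}$ by an invertible Vandermonde matrix in the distinct exponents $e_j$, so a common zero of all of them at a point $\ne 0$ forces each $c_jt^{e_j}$, in particular the constant term, to vanish there — a contradiction. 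The same Euler observation, applied once to $\Psi$ itself, already gives the $\epsilon$-free skeleton of the theorem: $\Psi$ has no nonzero root of multiplicity exceeding $L$. (An alternative to the compactness step, probably cleaner quantitatively, runs through the vanishing of the elementary symmetric functions of the roots forced by the sparsity of $\Psi_r$, in the spirit of van Vleck.)

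The step I expect to be the real obstacle is making the Rouch\'e reduction and the tier construction fit together quantitatively. The radius $\epsilon$ of the clustering disks must be small compared with both the intra-tier root separation and the tier gaps, while the Rouch\'e estimate needs a lower bound for $|\Psi_r|$ on the comparison circle, and that lower bound is itself supplied by the block-level clustering bound (at most $m_r$ roots of $\Psi_r$ near $w'$). So there is a mild bootstrap, and the hierarchy of parameters — the amalgamation threshold $B$, the coefficient bound $C$, the block-level radius $\epsilon_0$, and the final $\epsilon$ — must be chosen in a consistent order for the estimates to close; this is where Bini-type height estimates and the bookkeeping of scales do the work. By comparison, the combinatorics of the Newton polygon and the two applications of the Euler operator are routine.
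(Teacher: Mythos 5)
Your proposal is essentially correct but follows a genuinely different route from the paper. The paper stratifies the roots by observing that the elementary symmetric functions $S_j(\mathcal{R})$ vanish for $j\notin\mathcal{D}$, which forces $|z_{j+1}|\sim|z_j|$ at every non-critical index (Lemma \ref{lem:cancComp}); your Newton-polygon construction is the coefficient-side picture of the same phenomenon (the paper only makes the coefficient side explicit later, in Lemma \ref{lem:heightEstProcedure}). The real divergence is in the intra-tier clustering bound. The paper never passes to a block polynomial or invokes Rouch\'e: it works directly with the symmetric functions of the roots in a tier (Lemma \ref{lem:symEqWithinTier}), expands $S_{D-m}(\mathcal{T})$ around a distinguished root for $m=1,\dots,\tilde L+1$ (Lemma \ref{lem:serDistinguished}), and derives a contradiction from the invertibility of a matrix whose columns lie on a moment curve evaluated at the sparse exponents. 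Your compactness-plus-Euler-operator argument reaches the same conclusion, and it is no accident that both proofs bottom out in a Vandermonde determinant in the exponents: that is the common algebraic reason a polynomial with $m+1$ terms cannot have a nonzero root of multiplicity exceeding $m$. What you lose by taking the soft route is effectivity: the compactness step gives no control on $\epsilon_0$ in terms of the data, whereas the paper's matrix inversion does, and that quantitative control is reused downstream (Theorem \ref{thm:singleTierStruc} is applied again with the refined index sets $\widetilde{\mathcal{D}}$ to get Theorem \ref{thm:tierStrucFine}, and the rough factorisation Theorem \ref{thm:approxFact} needs explicit cell radii). What you gain is brevity: you avoid the combinatorial series expansion of Section \ref{sec:seriesExp} entirely.

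One step as written would fail and needs a concrete fix: the parameter hierarchy $B\to C\to\epsilon_0\to\epsilon$ is circular. Amalgamating edges whose slopes differ by at most $B$ makes the normalised block polynomial's coefficient bound $C$ and root annulus grow like $e^{O(LB)}$, so $\epsilon_0=\epsilon_0(C)$ shrinks with $B$; but the Rouch\'e inequality on the circle of radius $\asymp\epsilon|w'|$ requires the off-block terms, which are only smaller by $e^{-cB}$, to be dominated by $\inf|\Psi_r|\gtrsim(\epsilon_0/C)^{k_L}\cdot\max_j|y_j\rho_r^{k_j}|$, i.e.\ it requires $e^{-cB}\ll(\epsilon_0(B)/e^{O(LB)})^{k_L}$, which fails for every fixed $B$ once the right-hand side decays faster than $e^{-cB}$ --- and it does, even granting a polynomial lower bound for $\epsilon_0$ in $C$. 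The standard repair is a tower of thresholds $B_1<B_2<\dots<B_{L+1}$ with $B_{i+1}$ chosen large depending on the block-level constants at spread $B_i$, followed by pigeonhole over the $L$ consecutive slope gaps to find a level $i$ such that no gap lands in $(B_i,B_{i+1})$; one then amalgamates at threshold $B_i$ and enjoys separation $B_{i+1}$ between blocks. You should note that the paper quietly relies on the same device --- its Theorem \ref{thm:tierStrucGivenRegime} is conditional on being in a ``suitably separated'' regime in the sense of Definition \ref{def:tierDef}, and the passage from that conditional statement to the unconditional Theorem \ref{thm:tierStruc} requires exactly this pigeonhole --- so this is a gap you share with the source rather than one introduced by your method, but in your write-up the single-threshold amalgamation should be replaced before the estimates will close.
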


\begin{remark}Our main structural results are sharp in the sense that, we can construct a polynomial with sparse exponents for which the tiers have repeated roots of the largest degree permitted by our theorems. In particular, the polynomial
\[\Psi(t)=\prod_{r=1}^s(t^k-h_r^{k})^{l_r},\]
with $h_1\gg h_2\ldots \gg h_s$, is such that $L(\mathcal{T}_r)=l_r$ (repeated) roots are contained in $B(w,\epsilon|w|)$ for any choice of $w\in\mathcal{T}_r$, with $\mathcal{T}_r$ consisting of the $k$th roots of $h_r^k$ of which appears with multiplicity $l_r$.
\end{remark}

Let us recall the following structural result of Kowalski and Wright, Theorem 1.6 of \cite{kowWri}. The authors found application of this theorem to bound oscillatory integrals as well as for estimating the measure of sublevel sets of polynomials.
\begin{theorem}\label{thm:kowWri}
Let \[\Psi(t)=a_{k_L}t^{k_L}+a_{k_L-1}t^{k_L-1}+\ldots+a_0=a_d\prod_j(t-z_j)\] be a complex polynomial with $\max_l|a_l|=1$. Suppose that the coefficients satisfy $0<\gamma \leq |a_{k_L-k}|$ and $|a_{k_L-j}|\leq \delta_j(\gamma)$, $0\leq j\leq k-1$ for some $0\leq k \leq k_L$, where $\delta_j$ is some suitably small constant for each $0\leq j\leq k-1$. Then there are exactly $k$ large roots $z_1,z_2,\ldots,z_k$ and the remaining roots are bounded. In particular, with ordered roots $|z_1|\geq|z_2|\geq\ldots\geq|z_{k_L}|$, we have that \[\left(\frac{\gamma}{\max_j\delta_j}\right)^\frac{1}{k_L}\lesssim |z_1|,\ldots,|z_k|\text{ and } |z_{k+1}|,\ldots,|z_{k_L}|\lesssim 1.\]
\end{theorem}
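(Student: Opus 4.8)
The plan is to prove this by Rouch\'e's theorem, applied after splitting $\Psi$ into a ``low'' part controlled by the coefficient $a_{k_L-k}$ and a ``high'' part consisting only of the tiny top $k$ coefficients. I would write $\Psi=P+Q$ with
\[ P(t)=\sum_{l=0}^{k_L-k}a_lt^l,\qquad Q(t)=\sum_{j=0}^{k-1}a_{k_L-j}t^{k_L-j}, \]
so that $P$ has degree $k_L-k$ with $|a_{k_L-k}|\ge\gamma$, while every coefficient of $Q$ is at most $\max_j\delta_j$. On a circle $|t|=R$ with $R$ in a suitable intermediate range, $P$ should dominate $Q$, and then Rouch\'e's theorem forces $\Psi$ and $P$ to have the same number of zeros inside $|t|<R$, namely the $k_L-k$ zeros of $P$, leaving exactly $k$ zeros of $\Psi$ outside.

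First I would fix the admissible range for $R$. Since $|a_l|\le 1$ for every $l$ and $|a_{k_L-k}|\ge\gamma$, the reverse triangle inequality gives, on $|t|=R$ with $R\ge C_0:=\max(2,4/\gamma)$,
\[ |P(t)|\;\ge\;\gamma R^{k_L-k}-\sum_{l=0}^{k_L-k-1}R^l\;\ge\;\tfrac12\gamma R^{k_L-k}; \]
in particular all $k_L-k$ roots of $P$ lie in $|t|<C_0$. On the same circle $|Q(t)|\le k(\max_j\delta_j)R^{k_L}$ since $R\ge1$, so $|Q(t)|<|P(t)|$ there provided
\[ C_0\;\le\;R\;<\;R^{\ast}:=\Bigl(\frac{\gamma}{2k\max_j\delta_j}\Bigr)^{1/k}. \]
This window is non-empty exactly when $\max_j\delta_j<\gamma/(2kC_0^{k})$, which is the quantitative content of ``$\delta_j$ suitably small'' (a smallness depending on $\gamma$ and on the exponent set). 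For any $R$ in this window, Rouch\'e's theorem then yields that $\Psi$ has precisely $k_L-k$ zeros, counted with multiplicity, in $|t|<R$.

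Taking $R=C_0$ shows the bounded roots $z_{k+1},\ldots,z_{k_L}$ satisfy $|z_i|<C_0\lesssim1$ (with the implied constant depending on $\gamma$ and the exponents), and letting $R\uparrow R^{\ast}$ — the count being constant, hence the annulus $C_0\le|t|<R^{\ast}$ root-free — shows the remaining $k$ roots satisfy $|z_1|,\ldots,|z_k|\ge R^{\ast}$. Because $\gamma/(2k\max_j\delta_j)>C_0^{k}\ge1$ and $k\le k_L$, one has $R^{\ast}\ge(2k_L)^{-1}(\gamma/\max_j\delta_j)^{1/k_L}$, which is the stated lower bound. (If $a_{k_L}=0$ the polynomial has degree below $k_L$ and the same argument produces correspondingly fewer large roots, or one perturbs $a_{k_L}$ and passes to the limit.)

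The main obstacle I anticipate is purely quantitative: to make the Rouch\'e window $[C_0,R^{\ast})$ non-empty one must first control $C_0$, the radius of a disk containing every root of the truncation $P$, and it is precisely here that the factor $1/\gamma$ enters — hence the $\gamma$-dependence of the ``$\lesssim1$'' bound on the bounded roots. A secondary point is the bookkeeping of multiplicities and of roots lying on a comparison circle; the former is automatic from the zero-counting form of Rouch\'e's theorem, and the latter is handled by keeping $R$ strictly inside the open window and then passing to the limit. One could instead argue via the vanishing elementary symmetric functions of the roots, in the spirit of van Vleck, but the tiny leading coefficient $a_{k_L}$ makes those estimates delicate, so the Rouch\'e argument above appears to be the cleanest route.
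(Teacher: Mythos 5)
Your argument is correct, but note that the paper itself offers no proof of this statement: Theorem \ref{thm:kowWri} is quoted verbatim from Kowalski and Wright (Theorem 1.6 of \cite{kowWri}) as background, so there is nothing internal to compare against line by line. Your Rouch\'e decomposition $\Psi=P+Q$ checks out quantitatively: the bound $|P(t)|\geq\tfrac12\gamma R^{k_L-k}$ for $R\geq\max(2,4/\gamma)$ follows from the geometric-sum estimate, the window $[C_0,R^{\ast})$ is non-empty exactly under the stated smallness of $\max_j\delta_j$, and the passage from $\left(\gamma/(2k\max_j\delta_j)\right)^{1/k}$ to the exponent $1/k_L$ is legitimate because the base exceeds $1$. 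The only point worth flagging is stylistic rather than logical: the paper's own machinery for results of this type --- Lemma \ref{lem:cancComp} and the height-estimation procedure of Lemma \ref{lem:heightEstProcedure} --- works entirely through elementary symmetric functions $S_j(\mathcal{R})$ and crude comparisons of products of ordered roots, precisely because the author wants arguments that transfer to non-Archimedean fields (a point made explicitly in the introduction); Rouch\'e's theorem is singled out there as one of the tools ``specialised'' to $\CC$. So your proof buys a short, self-contained complex-analytic derivation with explicit constants, whereas the symmetric-function route you dismiss as ``delicate'' is in fact the one the paper builds on, and it is not obstructed by the small leading coefficient: one compares $|S_{k_L-k}|=|a_{k_L-k}/a_{k_L}|$ against $|S_j|$ for $j<k_L-k$ after ordering the roots, exactly as in the proof of Lemma \ref{lem:heightEstProcedure}, and the small $a_{k_L}$ cancels in the relevant ratios. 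Both routes are valid; yours is tied to $\CC$.
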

 This result is notably similar to that of van Vleck \cite{vanVleck} (following Montel \cite{montel}), which provides an explicit upper bound for the smallest $k_L-k$ roots, if we follow the above statement.

Further to Theorem \ref{thm:tierStruc}, and analogous to Theorem \ref{thm:kowWri}, we are able to obtain the following refined structural result. 
\begin{theorem}\label{thm:tierStrucFine}We fix a set of exponents $0=k_0< k_1<k_2<\ldots<k_L$ and consider certain polynomials whose exponents are drawn from this set. We consider real polynomials \[\Psi(t)=x+y_1t^{k_1}+y_2t^{k_2}+\ldots+y_Lt^{k_L}\] with $x,y_L\neq 0$ and $\max_{1\leq j \leq L}|y_j|=1$. Let $\gamma\in (0,1]$. We suppose, additionally, that there exists $m$ such that
\begin{equation}\label{eq:refinedCoefControl}\begin{aligned}&|y_m|^\frac{1}{k_m}\geq\gamma ,\\
&\text{and, for }n>m,\; |y_n|^\frac{1}{k_n}\leq \delta,
\end{aligned}\end{equation} where $\delta=\delta(\gamma)>0$ is some suitably small constant. We have the following refined structure for the roots of $\Psi$.

The roots of $\Psi$ may be stratified into $s=s(1)+s(2)$ tiers of roots $\mathcal{T}_1,\mathcal{T}_2,\ldots ,\mathcal{T}_s$, where $1\leq s \leq L$, which are ordered and separated in the sense that, if we take $w_{i}\in \mathcal{T}_i$, then \[|w_{1}|\gg |w_{2}|\gg\ldots \gg |w_{s}|.\]
Furthermore, the tiers have the following additional structure.

 We refer to each tier $\mathcal{T}_r$ with $1\leq r\leq s(1)$ as a large tier. Likewise, we refer to any $\mathcal{T}_r$ with $s(1)< r \leq s(1)+s(2)$ as a small tier. We have that
 \[1\lesssim_{\gamma} |w|\text{ for }w\in \mathcal{T}_r\text{ with }1\leq r\leq s(1)\]
 \[\text{ and } |w|\lesssim_{\gamma} 1\text{ for }w\in \mathcal{T}_r\text{ with }s(1)< r \leq s(1)+s(2).\]
 
At most $L-m+1$ large roots can cluster about a point: there is some suitable small parameter $\epsilon$ such that, for all $w\in\mathcal{T}_{r}$ with $1\leq r\leq s(1)$, there are at most $L-m+1$ roots of $\Psi$ in $B(w,\epsilon|w|)$. Furthermore, in the case that $s(2)\geq 1$, we have the following improvement. For any small root, $w\in \mathcal{T}_r$ with $s(1)< r \leq s(1)+s(2)$,  $B(w,\epsilon |w|)$ can contain at most $m$ roots. For any root $w$ in a large tier, $w\in \mathcal{T}_r$ with $1\leq  r \leq s(1)$, $B(w,\epsilon |w|)$ can contain at most $L-m$ roots.
 \end{theorem}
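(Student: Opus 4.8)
The plan is to derive Theorem~\ref{thm:tierStrucFine} from Theorem~\ref{thm:tierStruc} together with the rough-factorisation and closeness-of-root-structures machinery, using the coefficient control \eqref{eq:refinedCoefControl} (with $\delta=\delta(\gamma)$ chosen genuinely small, in particular $\delta\ll\gamma$) to locate a clean break in the Newton data of $\Psi$ at the exponent $k_m$.

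First I would record the factorisation suggested by \eqref{eq:refinedCoefControl}. Write
\begin{equation*}
\Psi(t)=\underbrace{\bigl(x+y_1 t^{k_1}+\cdots+y_{m-1}t^{k_{m-1}}+y_m t^{k_m}\bigr)}_{P(t)}\;+\;\underbrace{\bigl(y_{m+1}t^{k_{m+1}}+\cdots+y_L t^{k_L}\bigr)}_{E(t)},
\end{equation*}
and set $\hat R(t):=1+(y_{m+1}/y_m)t^{k_{m+1}-k_m}+\cdots+(y_L/y_m)t^{k_L-k_m}$, so that $y_m t^{k_m}\hat R(t)=y_m t^{k_m}+E(t)$. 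Then $P\cdot\hat R$ has constant term $x$, degree $k_L$, leading term $y_L t^{k_L}$, and the factorisation error is exactly $P\hat R-\Psi=(P-y_m t^{k_m})(\hat R-1)$, a product of the low part $x+y_1 t^{k_1}+\cdots+y_{m-1}t^{k_{m-1}}$ (degree $<k_m$) and the small perturbation $\hat R-1$. The estimates \eqref{eq:refinedCoefControl}, together with $\max_{1\le j\le L}|y_j|=1$, are precisely what place the hypotheses of the rough-factorisation theorem for this split: on a suitable annulus $c_\gamma\le|t|\le C_\gamma$ one has $|\hat R-1|\ll 1$ (by the per-degree smallness $|y_n|^{1/k_n}\le\delta$ for $n>m$) while $y_m t^{k_m}$ dominates $P$ there (using $|y_m|^{1/k_m}\ge\gamma$, $|y_j|\le 1$, and the normalisation to bound the low part), so $\Psi$, $P\hat R$ and $y_m t^{k_m}\hat R$ are mutually comparable across this annulus; a Rouch\'e comparison on circles through it then shows that $P$ contributes exactly $k_m$ roots and $\hat R$ exactly $k_L-k_m$ roots, and the closeness theorem upgrades this to a covering of the roots of $\Psi$ by strongly separated balls, each carrying equally many roots of $\Psi$ and of $P\hat R$. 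When $m=L$ this degenerates harmlessly to $\hat R\equiv 1$, $P=\Psi$, and no large roots.

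The refined tier structure now follows by applying Theorem~\ref{thm:tierStruc} to $P$ and to $\hat R$ separately and reading off the covering. The $k_m$ roots of $P$ split, by Theorem~\ref{thm:tierStruc} applied to $P$ (which has $m$ positive-exponent terms), into separated tiers with cluster parameter at most $m$, and the normalisation forces every root of $P$ to have modulus $\lesssim_\gamma 1$, so these become the small tiers; likewise the $k_L-k_m$ roots of $\hat R$ split into separated tiers with cluster parameter at most $L-m$ (as $\hat R$ has $L-m$ positive-exponent terms), and since $|y_m|^{1/k_m}\ge\gamma$ while $|y_n|^{1/k_n}\le\delta$ for $n>m$ these roots are all $\gtrsim_\gamma 1$ — in fact $\gtrsim\delta^{-c}$ for some $c>0$ — so they become the large tiers; concatenating and using the separation of the covering gives the ordering $|w_1|\gg\cdots\gg|w_s|$ and the size dichotomy. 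For the cluster bounds, fix a root $w$ of $\Psi$: the covering ball containing it is, up to constants, a ball $B(w',\epsilon'|w'|)$ about a root $w'$ of $P$ or of $\hat R$, and $B(w,\epsilon|w|)$ sits inside it, so the number of roots of $\Psi$ it meets is at most the number of roots of $P$ plus of $\hat R$ in that covering ball. If $w$ lies in a small tier, strong separation forces no $\hat R$-root into the ball, so the count is $\le m$; if $w$ lies in a large tier the symmetric argument gives $\le L-m$ whenever $s(2)\ge 1$, and since $P$ always contributes at least one bounded root when $m<L$ this holds in all cases with $m<L$. The bound $L-m+1$ is the weaker statement obtained before invoking this: a priori a large-root ball could also reach the topmost small tier, costing one extra root, and that is what \eqref{eq:refinedCoefControl} plus the normalisation removes.

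I expect the main obstacle to be the quantitative bookkeeping of the rough-factorisation step. One must choose $\delta=\delta(\gamma)$ and the comparison radii $c_\gamma,C_\gamma$ so that $\hat R-1$ and the low part of $P$ are simultaneously dominated by $y_m t^{k_m}$ across a full annulus, \emph{and} so that the resulting ball covering comes out strongly separated; this cannot be read off from Theorem~\ref{thm:tierStruc} directly but requires feeding \eqref{eq:refinedCoefControl} into the rough-factorisation estimates and tracking how the separation constants degrade under the factorisation. A secondary delicate point is the verification that the normalisation $\max_{1\le j\le L}|y_j|=1$ really does confine the roots of $P$ to scale $\lesssim_\gamma 1$ — via the vanishing and boundedness of the symmetric functions of its roots dictated by the exponent gaps — since this is what pins down which tiers are small and supplies the one-root improvement from $L-m+1$ to $L-m$.
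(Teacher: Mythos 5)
Your route via the split $\Psi=P\hat R-(P-y_mt^{k_m})(\hat R-1)$ is genuinely different from the paper's proof, which never factorises at this stage: the paper runs the height-estimation procedure of Lemma \ref{lem:heightEstProcedure} through Proposition \ref{prop:tierRefined} and then feeds the resulting information about which symmetric functions are near-vanishing into Theorem \ref{thm:singleTierStruc}. Unfortunately your argument has a genuine gap at precisely the step you flag as ``secondary'': the normalisation $\max_{1\leq j\leq L}|y_j|=1$ does \emph{not} confine the roots of $P=x+y_1t^{k_1}+\cdots+y_mt^{k_m}$ to scale $\lesssim_\gamma 1$, because the hypotheses place no upper bound on $|x|$. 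The product of the $k_m$ roots of $P$ has modulus $|x/y_m|$, so for $|x|$ large $P$ has roots of modulus roughly $|x/y_m|^{1/k_m}$, which can be comparable to, or exceed, the scale of the roots of $\hat R$. In that regime the identification ``small tiers $=$ roots of $P$, large tiers $=$ roots of $\hat R$'' collapses, there is no annulus separating the two root collections on which to run the Rouch\'e comparison (on $|t|\sim 1$ both $\Psi$ and $P\hat R$ are dominated by $x$ and have no roots there, so the comparison is vacuous), and the error $(P-y_mt^{k_m})(\hat R-1)$ would have to be controlled at scales $|t|\gg 1$ where $\hat R-1$ is no longer small. This is exactly the theorem's $s(2)=0$ case (all tiers large), and it is not a degenerate corner: it is the reason the theorem asserts only $L-m+1$ in general and improves to $L-m$ and $m$ only when $s(2)\geq 1$.

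Relatedly, your derivation of the fallback bound $L-m+1$ does not close. When the roots of $P$ and of $\hat R$ genuinely mix, counting ``roots of $P$ plus roots of $\hat R$ in the covering ball'' gives $m+(L-m)=L$, not $L-m+1$; the heuristic that only ``the topmost small tier'' intrudes, costing one extra root, presupposes separation properties of the $P$-roots that you have not established. The paper's mechanism here is of a different nature: Proposition \ref{prop:tierRefined} shows that in the merged final tier the symmetric functions $S_{D_j(\mathcal{T}_s)}(\mathcal{T}_s)$ with $L-m<L(s-1)+j<L$ are near-vanishing, so at most $L-m+1$ of them are substantial, and Theorem \ref{thm:singleTierStruc} applied with the reduced index set $\widetilde{\mathcal{D}}(\mathcal{T}_s)\subsetneq\mathcal{D}(\mathcal{T}_s)$ then yields the cluster bound. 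Some substitute for this within-tier cancellation argument is needed; the factorisation alone cannot produce it. A further, more minor, obstruction: Theorem \ref{thm:approxFact} is proved for monic polynomials in a tier regime specified with \emph{sufficiently strong} separation of the reference heights, which the hypothesis \eqref{eq:refinedCoefControl} does not by itself guarantee, so it cannot be invoked as a black box for your split.
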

 
 \begin{remark}
Other explicit refinements of the structure theorem, which are amenable to direct calculation, are possible. These refinements can be achieved according with our tier height estimation procedure, Lemma \ref{lem:heightEstProcedure}. 
\end{remark}

As mentioned above, we use these structural results to obtain bounds on oscillatory integrals with polynomial phases. We consider polynomial phases \begin{equation}\label{eq:polyPhaseDef}\begin{split}\Phi(t)=xt+\frac{y_1}{k_1+1}t^{k_1+1}+\frac{y_2}{k_2+1} t^{k_2+1}+\ldots+\frac{y_L}{k_L+1}t^{k_L+1},\\\text{ with } 1<k_1<k_2<\ldots<k_L
,\end{split}\end{equation}
with $y_L\neq 0$.
Given our main structure result, Theorem \ref{thm:tierStrucFine}, one can make use of an oscillatory integral bound for oscillatory integrals due to Phong and Stein, Theorem \ref{thm:stePho}.  From this, it is possible to obtain an alternative proof of the following unpublished oscillatory integral estimate due to Hickman and Wright, \cite{wrightHickCom}; see \cite{mythesis} for a presentation of their original proof. \begin{theorem}\label{thm:oscIntEst}For oscillatory integrals
\[I(x,y)=\int_{\RR} e^{i\Phi(t)}dt\] with phases $\Phi$ given by \eqref{eq:polyPhaseDef}, we have that \[\left|I(x,y)\right|\lesssim\min_{j=1,2,\ldots,L}|y_j|^{-\frac{1}{k_j+1}},\]
provided $k_1\geq L$.
\end{theorem}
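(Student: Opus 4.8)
The plan is to reduce to a scale‑normalised estimate and then cut $\RR$ into boundedly many pieces, on each of which the root structure of $\Phi'=\Psi$ supplied by Theorem~\ref{thm:tierStruc} makes the oscillatory integral bound of Theorem~\ref{thm:stePho} directly applicable.

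Since $\min_{j}|y_j|^{-1/(k_j+1)}\le|y_j|^{-1/(k_j+1)}$ for every $j$, it suffices to fix $j\in\{1,\dots,L\}$ and prove $|I(x,y)|\lesssim_{k_1,\dots,k_L}|y_j|^{-1/(k_j+1)}$. Substituting $t=|y_j|^{-1/(k_j+1)}s$ reduces this to $\bigl|\int_{\RR}e^{i\widetilde\Phi(s)}\,ds\bigr|\lesssim 1$, where $\widetilde\Phi(s)=\widetilde x\,s+\sum_{l=1}^{L}\frac{\widetilde y_l}{k_l+1}s^{k_l+1}$ is again of the form \eqref{eq:polyPhaseDef} but with $|\widetilde y_j|=1$, and $\widetilde\Phi'=\widetilde\Psi=\widetilde x+\sum_l\widetilde y_l s^{k_l}$ is a sparse polynomial with the same exponent set (if $\widetilde x=0$, first factor the appropriate power of $s$ out of $\widetilde\Psi$ and run the argument below on the remaining factor). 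Applying Theorem~\ref{thm:tierStruc}, or its refinement Theorem~\ref{thm:tierStrucFine}, to $\widetilde\Psi$ yields tiers $\mathcal T_1,\dots,\mathcal T_s$ ($s\le L$) of heights $R_1\gg\cdots\gg R_s$ with at most $L$ roots in any ball $B(w,\epsilon|w|)$. Using these heights (estimated via Lemma~\ref{lem:heightEstProcedure}) together with the at most $L$ scales at which the dominant monomial $M(\rho):=\max\bigl(|\widetilde x|,\max_l|\widetilde y_l|\rho^{k_l}\bigr)$ of $\widetilde\Psi$ changes, I would cover $\RR$ by $O_{k_1,\dots,k_L}(1)$ intervals of two kinds: \emph{gap} intervals, disjoint from every root cluster, on which $|\widetilde\Psi|$ is comparable to $M(\cdot)$ (no cancellation occurs off the clusters); and \emph{cluster} intervals, each isolating a single cluster of some size $N\le L$ at some scale $R$.

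On a gap interval $J$: if $J\subset\{|s|\le1\}$ the trivial bound $|\int_Je^{i\widetilde\Phi}|\le|J|$ suffices, and these intervals have total length $O(1)$; otherwise, after cutting at $|s|=\pm1$, we have $|\widetilde\Phi'|=|\widetilde\Psi|\sim M(s)\ge|\widetilde y_j||s|^{k_j}=|s|^{k_j}\ge1$ on $J$, so splitting $J$ into at most $k_L$ intervals of monotonicity of $\widetilde\Phi'$ and applying Theorem~\ref{thm:stePho} in its first–derivative form gives $|\int_Je^{i\widetilde\Phi}|\lesssim(\inf_J|\widetilde\Phi'|)^{-1}\lesssim1$. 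On a cluster interval $J$ at scale $R$: if $R<1$ then $|J|\lesssim\epsilon R<1$ and the trivial bound suffices; if $R\ge1$, then applying Theorem~\ref{thm:stePho} on $J$ — with the $N$ roots in the cluster and the remaining (far) roots, whose distances to $J$ multiply to a quantity comparable to $M(R)/R^{N}$, kept separated — gives $|\int_Je^{i\widetilde\Phi}|\lesssim\bigl(R^{N}/M(R)\bigr)^{1/(N+1)}$; since $M(R)\ge|\widetilde y_j|R^{k_j}=R^{k_j}$ and $k_j\ge k_1\ge L\ge N$, this is $\lesssim1$. This last inequality is the one point at which the hypothesis $k_1\ge L$ enters: it forces the cluster multiplicity, which the structure theorem bounds by $L$, never to exceed $k_j$. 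Summing the $O_{k_1,\dots,k_L}(1)$ contributions gives $\bigl|\int_{\RR}e^{i\widetilde\Phi}\bigr|\lesssim_{k_1,\dots,k_L}1$, hence the theorem.

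The step I expect to be most delicate is the construction of this cover and the attendant bookkeeping: verifying that $\widetilde\Psi$ really is comparable to its dominant monomial off the clusters, that each cluster can be isolated in an interval containing exactly its $N\le L$ roots, and that the product of the distances from that interval to the remaining roots is comparable to $M(R)/R^{N}$. All of this is precisely the quantitative content of the tier structure (Theorem~\ref{thm:tierStruc}) and of the height estimation procedure (Lemma~\ref{lem:heightEstProcedure}), but marrying it to the exact hypotheses of Theorem~\ref{thm:stePho}, so that the correct exponent $1/(N+1)$ appears, is where care is needed.
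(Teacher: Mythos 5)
Your reduction to a normalised integral is fine, but the core of your argument misuses the one tool the paper hands you. Theorem~\ref{thm:stePho} is a single \emph{global} bound on $\int_{\RR}e^{i\Phi}$ in terms of a max over roots of a min over clusters; it says nothing about $\int_J$ for a subinterval $J$, and it has no ``first-derivative form.'' Your plan applies it locally on each gap and cluster interval, which amounts to assuming a localised Phong--Stein/van der Corput estimate that is not among the stated tools --- indeed your gap/cluster decomposition, together with the claims that $|\widetilde\Psi|\sim M$ off the clusters and that the far-root distances multiply to $M(R)/R^{N}$, is essentially a re-derivation of the Phong--Stein estimate itself. Those comparability claims are exactly the delicate content (cf.\ the root-cell covering and the lower bounds on $|\Psi|$ off $N(\mathcal{R})$ in Section~\ref{sec:nearFact}), and you defer them, so the proposal does not close as written.

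The intended argument is much shorter and needs no decomposition of $\RR$: normalise so that $\max_j|y_j|=1$, let $m$ attain $\max_j|y_j|^{1/k_j}$, and for \emph{each} root $z$ of $\Phi'$ exhibit one $k_m$-cluster $\mathcal{C}\ni z$ with $\prod_{l\notin\mathcal{C}}|z-z_l|\gtrsim|z_1\cdots z_{k_L-k_m}|\gtrsim|S_{k_L-k_m}(\mathcal{R})|=|y_m/y_L|$. Concretely: if the tiers at and below that of $z$ contain at most $k_m$ roots, take $\mathcal{C}$ to be the $k_m$ smallest roots; otherwise take $\mathcal{C}\supset B(z,\epsilon|z|)\cap\mathcal{T}_r$ padded with small roots. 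The hypothesis $k_1\geq L$ enters only to guarantee $|\mathcal{C}|=k_m\geq L$, so the cluster can absorb the at most $L$ roots that Theorem~\ref{thm:tierStruc} allows in $B(z,\epsilon|z|)$. One application of Theorem~\ref{thm:stePho} then gives $|J(x,y)|\lesssim\bigl(|y_L|\,|z_1\cdots z_{k_L-k_m}|\bigr)^{-1/(k_m+1)}\lesssim 1$. Your observation that the cluster size must not exceed $k_j$ is the right instinct, but the mechanism by which $k_1\geq L$ is used is the cluster cardinality in the global estimate, not a pointwise derivative bound on intervals.
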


A cluster, $\mathcal{C}$, is a non-empty subcollection of roots of $\Phi'$. We make use of the following result of Phong and Stein, \cite{stePho}.\begin{theorem}\label{thm:stePho}
Suppose that $\Phi$, \eqref{eq:polyPhaseDef}, is a real polynomial such that $y_L\neq 0$ and $\Phi'$ has roots $z_1,z_2,\ldots,z_k$, counted with multiplicity. Then we have the oscillatory integral estimate \[\left|I(x,y)\right|=\left|\int_{\RR} e^{i\Phi(t)}dt\right|\leq C_{k} \max_j\min_{\mathcal{C}\ni z_j}\frac{1}{\left(|y_L|\prod_{l\notin \mathcal{C}}|z_j-z_l|\right)^\frac{1}{|\mathcal{C}|+1}},\]
where the maximum is taken over roots $z_j$ and the minimum over clusters of roots, $\mathcal{C}\subset \left\lbrace z_1,z_2,\ldots,z_k\right\rbrace$, such that  $z_j\in\mathcal{C}$.
\end{theorem}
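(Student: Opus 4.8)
The plan is to combine van der Corput's lemma with a decomposition of $\RR$ into $O_k(1)$ intervals adapted to the configuration of the roots of $\Phi'(t)=y_L\prod_{l=1}^{k}(t-z_l)$. On a single interval $J$ one has available the higher-derivative test, $|\int_J e^{i\Phi}|\le c_N\lambda^{-1/N}$ with $c_N$ absolute whenever $|\Phi^{(N)}|\ge\lambda$ on $J$ and $N\ge 2$; the first-derivative test, $|\int_J e^{i\Phi}|\le 3\lambda^{-1}$, when $\Phi'$ is monotone on $J$ with $|\Phi'|\ge\lambda$; and the trivial bound $|\int_J e^{i\Phi}|\le|J|$. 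The bridge between the derivatives of $\Phi$ and the local root geometry is the identity
\[\Phi^{(c+1)}(t)=c!\,y_L\sum_{\substack{S\subseteq\{1,\dots,k\}\\ |S|=k-c}}\prod_{l\in S}(t-z_l),\qquad 0\le c\le k,\]
obtained by differentiating the product form of $\Phi'$.

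For a cluster $\mathcal{C}$ containing a root $z_j$, write $\lambda_{\mathcal{C},j}:=|y_L|\prod_{l\notin\mathcal{C}}|z_j-z_l|$; the target is $|I(x,y)|\lesssim_k\max_j\min_{\mathcal{C}\ni z_j}\lambda_{\mathcal{C},j}^{-1/(|\mathcal{C}|+1)}$. The decomposition I would build is dyadic in distance about each root: for a fixed $z_j$, the $O(k)$ distinct values among $\{\max_{l\in\mathcal{C}}|z_j-z_l|,\ \min_{l\notin\mathcal{C}}|z_j-z_l|:\ \mathcal{C}\ni z_j\}$ cut a neighbourhood of $z_j$ into annular arcs; assigning each $t\in\RR$ to its nearest root and adjoining the far region $\{|t|\gtrsim\max_l|z_l|\}$ yields $O_k(1)$ intervals partitioning $\RR$. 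The crucial — and delicate — claim is that on each such interval $J$, sitting near a root $z_j$, there is a cluster $\mathcal{C}\ni z_j$ with $c:=|\mathcal{C}|\ge 1$ for which the single term $S=\{l:l\notin\mathcal{C}\}$ dominates the sum above, so that $|\Phi^{(c+1)}(t)|\approx c!\,\lambda_{\mathcal{C},j}$ throughout $J$; moreover one wants $\mathcal{C}$ to be, or to be controlled by, the minimiser of $\mathcal{C}\mapsto\lambda_{\mathcal{C},j}^{-1/(|\mathcal{C}|+1)}$ over the clusters through $z_j$. Establishing this is where one must fix the cluster-defining constants with enough separation and track how the ``active'' cluster nests as the scale shrinks.

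Granting the claim, the per-interval estimate is immediate: $c+1\ge 2$, so the higher-derivative test gives $|\int_J e^{i\Phi}|\lesssim_k\lambda_{\mathcal{C},j}^{-1/(|\mathcal{C}|+1)}$. On the far region one argues adaptively — using the higher-derivative test for $\Phi''$ on dyadic shells where $\Phi'$ is large, the test for $\Phi^{(k+1)}\equiv k!\,y_L$ near the transition, and the trivial bound on short shells — and a short comparison of $\max_l|z_l|$ with $|y_L|^{-1/(k+1)}$ shows the result is $\lesssim$ the target. Summing over the $O_k(1)$ intervals,
\[|I(x,y)|\ \le\ \sum_J\Big|\int_J e^{i\Phi}\,dt\Big|\ \lesssim_k\ \max_j\ \min_{\mathcal{C}\ni z_j}\ \big(|y_L|\textstyle\prod_{l\notin\mathcal{C}}|z_j-z_l|\big)^{-1/(|\mathcal{C}|+1)}.\]

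The main obstacle is exactly the decomposition claim: proving that a single term genuinely dominates the symmetric-function sum on each arc (which dictates the choice of constants and the nesting bookkeeping), and arranging that the binding estimate near each root is attached to its minimising cluster, so that the final bound carries the stated $\max_j\min_{\mathcal{C}}$ form rather than a cruder maximum over all clusters. An essentially equivalent route avoids van der Corput on most of the line, using instead $|I(x,y)|\lesssim_k\alpha^{-1}+|\{t:|\Phi'(t)|\le\alpha\}|$ for every $\alpha>0$ (the first term from the first-derivative test on the $O_k(1)$ monotonicity arcs where $|\Phi'|>\alpha$), together with the same cluster decomposition of the sublevel set, the elementary estimate $|\{t\in\RR:|p(t)|\le\beta\}|\lesssim_c\beta^{1/c}$ for monic $p$ of degree $c$, and an optimisation in $\alpha$; the combinatorial core is identical.
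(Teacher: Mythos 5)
The paper does not prove this statement: Theorem \ref{thm:stePho} is quoted verbatim as a known result of Phong and Stein (the text reads ``We make use of the following result of Phong and Stein, \cite{stePho}'') and is used as a black box in Part \ref{part:oscInt}. So there is no in-paper proof to compare against, and your attempt must be judged on its own terms. Your overall strategy is the standard one for cluster estimates of this type --- decompose $\RR$ into $O_k(1)$ intervals adapted to the root configuration, use the identity $\Phi^{(c+1)}(t)=c!\,y_L\sum_{|S|=k-c}\prod_{l\in S}(t-z_l)$ to convert local root geometry into derivative lower bounds, and apply van der Corput on each piece --- and that identity and the general architecture are correct.

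However, as a proof the write-up has a genuine gap, one you yourself flag: the ``crucial and delicate claim'' is asserted, not established, and it is the entire content of the theorem. Two distinct issues are buried in it. First, single-term dominance of the symmetric sum on an arc at distance $\rho$ from $z_j$ requires that the roots split cleanly into those at distance $\ll\rho$ and those at distance $\gg\rho$; since the $O(k)$ distances $|z_j-z_l|$ need not be separated, you must insert a pigeonhole to choose scales with large multiplicative gaps, and you never do this. Second, and more seriously, the cluster $\mathcal{C}_J$ that is \emph{active} on an interval $J$ (the one for which the derivative bound holds) is determined by the geometry, not by optimality, and the resulting per-interval bound $\lambda_{\mathcal{C}_J,j}^{-1/(|\mathcal{C}_J|+1)}$ satisfies $\lambda_{\mathcal{C}_J,j}^{-1/(|\mathcal{C}_J|+1)}\geq\min_{\mathcal{C}\ni z_j}\lambda_{\mathcal{C},j}^{-1/(|\mathcal{C}|+1)}$ --- the inequality points the wrong way. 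Summing your per-interval bounds therefore yields a $\max_j\max_{\mathcal{C}}$-type bound over the nested chain of active clusters at each root, not the stated $\max_j\min_{\mathcal{C}}$. Closing this requires an additional balancing argument for each fixed $j$: on each annular regime one must take the minimum of the trivial bound $|J|$ and the derivative bound, and show that the resulting sum over the nested chain of clusters telescopes (geometrically) to the single best cluster's contribution. Your final paragraph gestures at exactly this via the $\alpha^{-1}+|\{|\Phi'|\le\alpha\}|$ route with an optimisation in $\alpha$, but the optimisation and the verification that it produces the $\min$ over clusters are precisely what is missing.
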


In fact, our refined structural statement, Theorem \ref{thm:tierStrucFine} can be applied to obtain a refined oscillatory integral estimate. Hickman and Wright established the previous estimate, Theorem \ref{thm:oscIntEst} and, as shown in \cite{mythesis}, their method of proof is capable of establishing the below result.

\begin{theorem}\label{thm:oscIntEstFine} For oscillatory integrals\[I(x,y)=\int_{\RR} e^{i\Phi(t)}dt\] with phases $\Phi$ given by \eqref{eq:polyPhaseDef}, we have that \begin{equation}\label{eq:oscIntEstI}\left|I(x,y)\right|\lesssim\min_{j=1,2,\ldots,L}|y_j|^{-\frac{1}{k_j+1}},\end{equation}
provided $k_1\geq L$.
More generally, if $\max_j|y_j|^{\frac{1}{k_j}}=|y_m|^\frac{1}{k_m}$, then \[\left|I(x,y)\right|\lesssim\min_{j=1,2,\ldots,L}|y_j|^{-\frac{1}{k_j+1}},\]
provided $k_m \geq L-m+1$.
\end{theorem}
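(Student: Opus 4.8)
The plan is to feed the root structure of Theorems~\ref{thm:tierStruc} and~\ref{thm:tierStrucFine} into the Phong--Stein oscillatory integral estimate, Theorem~\ref{thm:stePho}, applied to $\Phi$ as in \eqref{eq:polyPhaseDef}. The starting point is that $\Phi'=\Psi$ is exactly the sparse polynomial $x+y_1t^{k_1}+\cdots+y_Lt^{k_L}$ analysed in those theorems; write $z_1,\dots,z_{k_L}$ for its roots with multiplicity (we may assume $x\neq0$, the case $x=0$ following by a limiting argument or by peeling off the factor $t^{k_1}$). By Theorem~\ref{thm:stePho} it suffices to produce, for each root $z_j$, a cluster $\mathcal{C}_j\ni z_j$ with
\[
|y_L|\prod_{l\notin\mathcal{C}_j}|z_j-z_l|\ \gtrsim\ \Bigl(\max_{1\le n\le L}|y_n|^{1/(k_n+1)}\Bigr)^{|\mathcal{C}_j|+1}.
\]
A preliminary remark is that both sides of the desired estimate are invariant under the rescaling $t\mapsto\lambda t$, under which $x\mapsto\lambda x$, $y_j\mapsto\lambda^{k_j+1}y_j$ and $|I|\mapsto\lambda^{-1}|I|$ --- the exponent $\tfrac1{k_j+1}$ matching the power $\lambda^{k_j+1}$ exactly --- so we are free to normalise the coefficients. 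For assertion \eqref{eq:oscIntEstI} (the case $m=1$, where $k_1\ge L$) no normalisation beyond $x,y_L\neq0$ is needed and the decomposition of Theorem~\ref{thm:tierStruc} will do; for the general statement one normalises so as to bring $\Psi$ into the hypotheses of Theorem~\ref{thm:tierStrucFine} with the given $m$, the assumption $\max_j|y_j|^{1/k_j}=|y_m|^{1/k_m}$ supplying the comparison between $|y_m|$ and the higher coefficients that \eqref{eq:refinedCoefControl} requires (if that reduction is awkward, one instead reruns the tier-height estimation procedure of Lemma~\ref{lem:heightEstProcedure} under the hypothesis in force here).

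With this in hand, I would fix a root $z_j$ in a tier $\mathcal{T}_r$ and take $\mathcal{C}_j$ to be the set of roots of $\Psi$ in $B(z_j,\epsilon|z_j|)$, with $\epsilon$ the small parameter of the structure theorem. Then $|\mathcal{C}_j|\le L$ (via Theorem~\ref{thm:tierStruc}), and $|\mathcal{C}_j|\le L-m+1$, with the sharper bounds $|\mathcal{C}_j|\le m$ resp.\ $|\mathcal{C}_j|\le L-m$ for $z_j$ in a small resp.\ large tier, via Theorem~\ref{thm:tierStrucFine}; note $\mathcal{C}_j\subseteq\mathcal{T}_r$, since $\epsilon$-close roots share a tier. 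To bound $\prod_{l\notin\mathcal{C}_j}|z_j-z_l|$ below I would split the roots $z_l\notin\mathcal{C}_j$ by tier, using the strong separation $|w_1|\gg\cdots\gg|w_s|$: $|z_j-z_l|\approx|z_l|$ if $z_l$ lies above $\mathcal{T}_r$, $|z_j-z_l|\approx|z_j|$ if it lies below, and $\epsilon|z_j|\le|z_j-z_l|\approx|z_j|$ if $z_l\in\mathcal{T}_r\setminus\mathcal{C}_j$ (the factor $\epsilon$ being absorbed into the implied constant, which is anyway allowed to depend on the exponent set). Hence, with $A_j$ the number of roots in tiers strictly above $\mathcal{T}_r$,
\[
|y_L|\prod_{l\notin\mathcal{C}_j}|z_j-z_l|\ \approx\ |y_L|\,\Bigl(\prod_{l\ \text{above}\ \mathcal{T}_r}|z_l|\Bigr)\,|z_j|^{\,k_L-A_j-|\mathcal{C}_j|}.
\]

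The next step is to identify the right-hand side in terms of the coefficients. Because $\Psi$ has sparse exponents, the cumulative tier counts $A_j=|\mathcal{T}_1|+\cdots+|\mathcal{T}_{r-1}|$ are precisely the positions of the Newton-polygon vertices of $\Psi$, so $k_L-A_j=k_{i}$ for an admissible index $i=i(r)$; and because the tier heights are strongly separated, $e_{A_j}$ of the roots is dominated by the product of its $A_j$ largest entries, so Vieta's formulas give $|y_L|\prod_{l\ \text{above}\ \mathcal{T}_r}|z_l|\approx|y_{i(r)}|$. Substituting this, together with the tier-height estimate for $|z_j|$ (which Lemma~\ref{lem:heightEstProcedure} expresses as a ratio of coefficients at consecutive Newton vertices), turns the displayed inequality into a finite family of inequalities among the exponents $0=k_0<k_1<\cdots<k_L$, the tier multiplicities, and the cluster size $|\mathcal{C}_j|$. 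The hypothesis $k_m\ge L-m+1$ enters precisely in checking these: it forces $|\mathcal{C}_j|\le L-m+1\le k_m\le k_n$ for $n\ge m$, so that the exponents $\tfrac{|\mathcal{C}_j|+1}{k_n+1}$ do not exceed $1$; in the setting of Theorem~\ref{thm:tierStruc} one has instead $|\mathcal{C}_j|\le L\le k_n$ for all $n$, which is exactly the hypothesis $k_1\ge L$ behind \eqref{eq:oscIntEstI}. The "large versus small tier" dichotomy of Theorem~\ref{thm:tierStrucFine} (whether $|z_j|\gtrsim_\gamma1$ or $|z_j|\lesssim_\gamma1$) is what keeps the remaining coefficient contributions, and the normalisation constants, under control.

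I expect the crux --- and the main obstacle --- to be precisely this final bookkeeping: reconciling the \emph{combinatorial} cluster-size bound $L-m+1$ with the \emph{arithmetic} of the exponents $k_n+1$ appearing in the Phong--Stein exponents, all while carrying the multiplicative error terms, which over $\RR$ are genuine (exponent-set-dependent) constants rather than the exact identities available non-Archimedeanly, and while respecting the sparsity constraint that only cumulative tier sizes, not arbitrary truncations, correspond to admissible exponents. A secondary difficulty is the normalisation: one must either verify that a single rescaling $t\mapsto\lambda t$ genuinely places $\Psi$ in the hypotheses \eqref{eq:refinedCoefControl} of Theorem~\ref{thm:tierStrucFine}, or extract the cluster bound $L-m+1$ directly from Lemma~\ref{lem:heightEstProcedure} under the weaker hypothesis $\max_j|y_j|^{1/k_j}=|y_m|^{1/k_m}$ actually assumed here.
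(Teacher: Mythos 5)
Your overall strategy (feed the tier structure into the Phong--Stein cluster bound, Theorem \ref{thm:stePho}) is the paper's strategy, but your choice of cluster is wrong, and the inequality you need fails for it. You take $\mathcal{C}_j=\mathcal{R}\cap B(z_j,\epsilon|z_j|)$, which leads (as you correctly compute) to $|y_L|\prod_{l\notin\mathcal{C}_j}|z_j-z_l|\approx |y_{i(r)}|\,|z_j|^{\,k_{i(r)}-|\mathcal{C}_j|}$. For roots in a \emph{low} tier this quantity is not bounded below by the required power of $\max_n|y_n|^{1/(k_n+1)}$. Already for $\Psi(t)=x+y_1t^{k_1}$ with $|y_1|=1$ and $|x|=\delta$ small, every root is isolated, your cluster is a singleton, and $|y_1|\prod_{l\neq j}|z_j-z_l|\sim \delta^{(k_1-1)/k_1}\to 0$, so the Phong--Stein bound with your cluster blows up while the claimed estimate is $O(1)$. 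The exponent condition $k_m\geq L-m+1$ does not rescue this: the obstruction is not that $\tfrac{|\mathcal{C}_j|+1}{k_n+1}$ exceeds $1$, but that $|z_j|^{k_{i(r)}-|\mathcal{C}_j|}$ is genuinely small when $|z_j|$ is small and $|\mathcal{C}_j|<k_{i(r)}$. The missing idea is to exploit the minimum over clusters in Theorem \ref{thm:stePho} by taking $|\mathcal{C}|$ of size \emph{exactly} $k_m$ (resp.\ $k_{m'}$): one includes all roots of $B(z_j,\epsilon|z_j|)\cap\mathcal{T}_r$ and then pads with arbitrary roots from the lower tiers (or simply takes the $k_m$ smallest roots when fewer than $k_m$ remain below). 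With that choice the complementary product is comparable to $|z_1\cdots z_{k_L-k_m}|\gtrsim |S_{k_L-k_m}(\mathcal{R})|=|y_m/y_L|$, so $|y_L|\prod_{l\notin\mathcal{C}}|z_j-z_l|\gtrsim|y_m|$, which is exactly the coefficient normalised to $1$; this is where the hypothesis $k_m\geq L$ (resp.\ $k_{m'}\geq L-m'+1$) is actually used --- it guarantees the size-$k_m$ cluster has room for every root of the ball.

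A secondary gap you flag but do not close: Theorem \ref{thm:tierStrucFine} requires $|y_n|^{1/k_n}\leq\delta(\gamma)$ for all $n>m$ with $\delta$ \emph{suitably small}, and this is not implied by $\max_j|y_j|^{1/k_j}=|y_m|^{1/k_m}$. The paper resolves this with an explicit inductive selection of thresholds $\delta_0,\delta_1,\dots$ and indices $m=m(0)<m(1)<\cdots$, terminating at some $m'\geq m$ for which the hypotheses of Theorem \ref{thm:tierStrucFine} do hold; one then builds $k_{m'}$-clusters and uses $|y_{m'}|\gtrsim_{\delta_1,\dots}1$. Without this (or an equivalent reduction), the refined structure theorem cannot be invoked at all in the generality your argument needs.
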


The structural analysis of polynomial roots in this part is expected to hold with respect to polynomials over fields other than $\RR$. This corresponds with the arguments in \cite{kowWri}, which are presented for non-Archimedean fields but also hold over $\RR$. Wright has developed a framework for the study of oscillatory integrals over fields other than $\RR$ and proved analogues of the Phong-Stein cluster bound for such oscillatory integrals. Using bounds for oscillatory integrals over $\CC$ from \cite{wrightCompOscInt}, one could obtain a complex analogue of Theorem \ref{thm:oscIntEstFine}; the proof would be the same as the proof of Theorem \ref{thm:oscIntEstFine}, as the root structure we make use of depends only on the size of the coefficients.

\begin{theorem}We consider complex oscillatory integrals
\[I(x,y)=\int_{\CC} e(\Phi(t))\phi(t)dt,\]
with $\phi\in C_c^{\infty}(\CC)$ and $e(z)=e^{i(\Re(z)+\Im(z))}$. The phases $\Phi$ we consider are given by \[\Phi(z)=xz+\frac{y_1}{k_1+1}z^{k_1+1}+\ldots+\frac{y_L}{k_L+1}z^{k_L+1},\]
with $x,y_1,\ldots,y_L\in\CC$ and $y_L\neq 0$. We have that \begin{equation}\label{eq:compOscIntBound}\left|I(x,y)\right|\lesssim\min_{j=1,2,\ldots,L}|y_j|^{-\frac{2}{k_j+1}},\end{equation}
provided $k_1\geq L$.
More generally, if $\max_j|y_j|^{\frac{1}{k_j}}=|y_m|^\frac{1}{k_m}$, then \[\left|I(x,y)\right|\lesssim\min_{j=1,2,\ldots,L}|y_j|^{-\frac{2}{k_j+1}},\]
provided $k_m \geq L-m+1$.
\end{theorem}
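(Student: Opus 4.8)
The plan is to follow the proof of Theorem~\ref{thm:oscIntEstFine} essentially verbatim, the single modification being that the Phong--Stein cluster bound, Theorem~\ref{thm:stePho}, is replaced by its complex counterpart from \cite{wrightCompOscInt}. As indicated in the discussion preceding the statement, Wright's framework provides a complex analogue of the cluster bound of the same shape,
\[ \left|I(x,y)\right| \lesssim \max_j \min_{\mathcal{C}\ni z_j} \frac{1}{\bigl(|y_L|\prod_{l\notin\mathcal{C}}|z_j-z_l|\bigr)^{2/(|\mathcal{C}|+1)}}, \]
where $z_1,\ldots,z_{k_L}$ are the roots of $\Phi'(z)=x+y_1z^{k_1}+\ldots+y_Lz^{k_L}$, the maximum is over roots and the minimum over clusters containing the given root, and the cutoff $\phi\in C_c^\infty(\CC)$ is absorbed into the implied constant. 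The only difference from the real statement is the exponent $2/(|\mathcal{C}|+1)$ in place of $1/(|\mathcal{C}|+1)$, reflecting the real dimension of $\CC$. Note also that it suffices to prove the second (``more generally'') assertion, since $k_1\geq L$ forces $k_m>k_1\geq L\geq L-m+1$ for any $m$, whence the first assertion follows.

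Next I would apply the structure theorem to $\Phi'$, which has exactly the form $x+y_1z^{k_1}+\ldots+y_Lz^{k_L}$ with $y_L\neq 0$ treated in Theorems~\ref{thm:tierStruc} and~\ref{thm:tierStrucFine}. Although those theorems are phrased for real polynomials, their proofs use only the metric and valuation of $\CC$, and, crucially, the tier stratification, the separation $|w_1|\gg\cdots\gg|w_s|$, and the clustering bounds all depend on the coefficients only through the moduli $|y_j|$. Passing from real to complex coefficients leaves the $|y_j|$ unchanged, and hence leaves untouched the hypotheses $\max_j|y_j|^{1/k_j}=|y_m|^{1/k_m}$ and $k_m\geq L-m+1$; therefore the complex analogue of Theorem~\ref{thm:tierStrucFine} applies to $\Phi'$. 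In particular its roots split into large and small tiers, and any ball $B(w,\epsilon|w|)$ contains at most $L-m+1$ roots, with the refinement that at most $m$ roots lie in such a ball about a small-tier root and at most $L-m$ about a large-tier root.

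With the root structure in hand, the assembly step is identical to the real case. For a root $z_j$ I would take $\mathcal{C}$ to be the collection of roots of $\Phi'$ lying in $B(z_j,\epsilon|z_j|)$, so that $|\mathcal{C}|\leq L-m+1$; for $l\notin\mathcal{C}$ one has $|z_j-z_l|\gtrsim\epsilon|z_j|$ when $z_l$ lies in the same tier and $|z_j-z_l|\approx\max(|z_j|,|z_l|)$ when it lies in a different tier (using the tier separation). Combining these lower bounds with the tier-height estimates of Lemma~\ref{lem:heightEstProcedure} --- that is, the Vieta relations expressing the $y_j$ through the elementary symmetric functions of the $z_l$ --- gives a lower bound for $|y_L|\prod_{l\notin\mathcal{C}}|z_j-z_l|$ which, after raising to the power $2/(|\mathcal{C}|+1)$ and invoking $k_m\geq L-m+1$ to control the cluster size against the exponent gap, is $\gtrsim\min_j|y_j|^{2/(k_j+1)}$. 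Inserting this into the complex cluster bound yields~\eqref{eq:compOscIntBound}.

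The main obstacle is not a new one: it is precisely the bookkeeping, already carried out in the proof of Theorem~\ref{thm:oscIntEstFine}, of balancing the size of the chosen cluster against the spacing of the remaining roots and the exponent $k_m$, so that the exponent $2/(|\mathcal{C}|+1)$ together with the height estimates reproduces $\min_j|y_j|^{-2/(k_j+1)}$ and no worse. The only points genuinely particular to the complex setting --- that the cluster bound of \cite{wrightCompOscInt} holds with the stated cutoff and with the doubled exponent, and that the structure theorems transfer verbatim to complex coefficients --- are both supplied by the cited results and introduce no additional analytic difficulty, so the proof reduces, up to the replacement of exponents, to that of Theorem~\ref{thm:oscIntEstFine}.
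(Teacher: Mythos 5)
Your proposal matches the paper's own treatment of this theorem, which consists precisely of the remark that the proof of Theorem \ref{thm:oscIntEstFine} carries over verbatim once the Phong--Stein cluster bound is replaced by its complex analogue from \cite{wrightCompOscInt} (with the doubled exponent $2/(|\mathcal{C}|+1)$), the point being that the root-structure results depend on the coefficients only through the moduli $|y_j|$. One small caution on your sketch of the assembly step: the paper does not take $\mathcal{C}$ to be merely the roots in $B(z_j,\epsilon|z_j|)$, but rather a cluster of size exactly $k_m$ (respectively $k_{m'}$) containing those roots and padded out with small roots --- this is what makes the exponent come out as $2/(k_m+1)$ rather than $2/(|\mathcal{C}|+1)$ for a smaller $|\mathcal{C}|$ --- but since you explicitly defer this bookkeeping to the existing proof of Theorem \ref{thm:oscIntEstFine}, where it is done correctly, this is a presentational slip rather than a gap.
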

\begin{remark}
Note that we have the exponents $\frac{2}{k_j+1}$ appearing on the right hand side of \eqref{eq:compOscIntBound}, in distinction to the real case. This is a natural feature of complex oscillatory integrals, as examples in \cite{wrightCompOscInt} show.
\end{remark}

The primary structural result, Theorem \ref{thm:tierStruc}, is not framed for explicit computation. For example, it makes reference to the \emph{relative} size of roots rather than their \emph{actual} size. For applications, explicit size estimates are desirable. Indeed, we outline a procedure for estimating the size of roots in Lemma \ref{lem:heightEstProcedure} and it is this procedure which allows us to strengthen the statement of Theorem \ref{thm:tierStruc} to the refined structural result, Theorem \ref{thm:tierStrucFine}. 

There is one final component of our structural investigations. Our analysis for tier stratification suggests the consideration of a factorised polynomial expression, with distinct factors corresponding to distinct root tiers. This is because of the way in which different coefficients are substantial in our analysis of different root tiers. For monic polynomials, $\Psi$, we analyse such a factorisation in Section \ref{sec:nearFact}. We find that the factorisation is quantifiably close to the original polynomial $\Psi$. Furthermore, the root structure of $\Psi$ is essentially preserved by its  rough factorisation.
\begin{theorem*}
We consider monic polynomials 
\[x+y_1t^{k_1}+\ldots +y_{L-1}t^{k_{L-1}}+t^{k_L}.\]
There exists a polynomial $\widetilde{\Psi}=\prod_{l=1}^s\widetilde{\Psi}_l(t)$ which  roughly factorises $\Psi$ in the following sense.

The polynomial $\widetilde{\Psi}_l$ has roots, $\widetilde{\mathcal{T}}_l$, which are all of mutually comparable magnitude. Furthermore, for any choice of roots $w_j\in \widetilde{\mathcal{T}}_j$, we have that 
\[|w_1|\gg|w_2|\gg\ldots\gg|w_s|.\] 

There exists a covering, $N(\mathcal{R})$, of the roots, $\mathcal{R}\subset \CC$, of $\Psi$ which satisfies the following. Each connected component of $N(\mathcal{R})$, which we call a cell, is given by a ball. Each root $w$ of $\Psi$ belongs to a unique cell, which has radius $\ll \epsilon |w|$, with $\epsilon$ as in Theorem \ref{thm:tierStruc}. Cells are strongly separated in the sense that, for distinct cells $B$ and $B'$, $d(B,B')\gg \max\lbrace\operatorname{diam}B, \operatorname{diam}B'\rbrace$.

Roots of $\widetilde{\Psi}$ are close to the roots of $\Psi$ in the following sense. For a cell $B$ containing exactly $m$ roots of $\Psi$, $B$ contains exactly $m$ roots of $\widetilde{\Psi}$, and we also have that $1\leq m\leq L$.

The rough factorisation  $\widetilde{\Psi}$ is close to $\Psi$ in the following sense. For $t\notin N(\mathcal{R})$,
\begin{equation}\label{eq:approxFactError}|\Psi(t)-\widetilde{\Psi}(t)|\ll | \Psi(t)|.\end{equation}
\end{theorem*}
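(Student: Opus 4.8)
The plan is to build the rough factorisation $\widetilde{\Psi}$ directly from the tier stratification of Theorem~\ref{thm:tierStruc} and the explicit height-estimation procedure of Lemma~\ref{lem:heightEstProcedure}, and then to quantify errors tier by tier. First I would invoke Theorem~\ref{thm:tierStruc} to obtain the tiers $\mathcal{T}_1,\ldots,\mathcal{T}_s$ with $|w_1|\gg\cdots\gg|w_s|$, and use Lemma~\ref{lem:heightEstProcedure} to assign to each tier $\mathcal{T}_l$ a concrete common magnitude scale $\rho_l$ (a monomial in the coefficients) with $\rho_1\gg\cdots\gg\rho_s$. The natural candidate for the factor $\widetilde{\Psi}_l$ is the ``truncation'' of $\Psi$ adapted to the scale $\rho_l$: one rescales $t=\rho_l u$, discards the coefficients that are negligible at that scale, and keeps the dominant block of monomials; the product of the resulting (rescaled-back) factors, suitably normalised so that $\widetilde{\Psi}$ is monic of degree $k_L$, is $\widetilde{\Psi}=\prod_{l=1}^s\widetilde{\Psi}_l$. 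By construction $\widetilde{\Psi}_l$ has all its roots at magnitude $\asymp\rho_l$, giving the first two claims; that each $\widetilde{\mathcal{T}}_l$ is nonempty with $|\widetilde{\mathcal{T}}_l|\le L$ follows because the dominant block at each scale spans at most $L+1$ consecutive exponents.

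Next I would construct the covering $N(\mathcal{R})$. For each root $w$ of $\Psi$, Theorem~\ref{thm:tierStruc} already gives that $B(w,\epsilon|w|)$ contains at most $L(\mathcal{T}_r)\le L$ roots; I would take $N(\mathcal{R})$ to be the union of the balls $B(w,\epsilon|w|)$ and then pass to connected components, discarding the $\epsilon$ and re-centering each component inside a single ball of radius $\ll\epsilon|w|$ — this is a standard Vitali/chaining argument, using that two roots either lie within $O(\epsilon\min(|w|,|w'|))$ of each other or are separated by $\gg\epsilon\max(|w|,|w'|)$, which is exactly the dichotomy powering the ``at most $L$'' count and which also yields the strong separation $d(B,B')\gg\max\{\operatorname{diam}B,\operatorname{diam}B'\}$. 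The equality of root counts in each cell $B$ — $\Psi$ and $\widetilde{\Psi}$ have the same number of roots in $B$ — I would get from Rouché's theorem applied on a circle $\partial B'$ with $B\subset B'\subset N(\mathcal{R})^c$-boundary, provided the error estimate \eqref{eq:approxFactError} holds on that circle; so the count statement is a consequence of the final estimate together with the fact that $\Psi$ has no roots in $N(\mathcal{R})\setminus(\text{cells})$.

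The heart of the matter, and the step I expect to be the main obstacle, is the pointwise error bound \eqref{eq:approxFactError}: $|\Psi(t)-\widetilde{\Psi}(t)|\ll|\Psi(t)|$ for $t\notin N(\mathcal{R})$. The strategy is a scale decomposition of the $t$-plane into annuli $A_l=\{|t|\asymp\rho_l\}$ together with the ``gap'' annuli between consecutive tier scales. On $A_l$ away from the roots in $\widetilde{\mathcal{T}}_l$, one shows that both $\Psi(t)$ and $\widetilde{\Psi}(t)$ are dominated by the same leading block of monomials: for $\Psi$ this is where the tier-height estimates of Lemma~\ref{lem:heightEstProcedure} are used to certify that the omitted coefficients contribute $\ll$ the kept ones, and for $\widetilde{\Psi}$ this is essentially the definition of the factor; the remaining factors $\widetilde{\Psi}_{l'}$, $l'\ne l$, evaluated at $|t|\asymp\rho_l$ are well-approximated by their leading monomials (they are far from their own roots), so the product telescopes and the mismatch is a sum of genuinely subdominant terms, controlled by a fixed power of the separation parameter. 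The delicate points are (i) making the implied constants in ``$\ll$'' uniform across all the coefficient regimes — this requires the finitely-many-exponents hypothesis and a careful bookkeeping of how small $\delta$ and $\epsilon$ must be chosen relative to $L$ and the $k_j$; and (ii) handling $t$ in the transition annuli between tiers and $t$ near $N(\mathcal{R})$, where one must use that $t\notin N(\mathcal{R})$ keeps $t$ a definite relative distance from every root so that $|\Psi(t)|$ is not anomalously small — here one invokes a lower bound for $|\Psi(t)|$ of the form $|\Psi(t)|\gtrsim\prod_j|t-z_j|^{?}$ combined with the cell geometry. Once these lower bounds on $|\Psi|$ and the scale-by-scale upper bounds on $|\Psi-\widetilde{\Psi}|$ are in hand, \eqref{eq:approxFactError} follows, and with it the Rouché count, completing the proof.
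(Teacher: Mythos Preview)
Your overall architecture matches the paper's: define the factors $\widetilde{\Psi}_l$ as the consecutive blocks of monomials of $\Psi$ (suitably normalised), bound $E=\widetilde\Psi-\Psi$ by an annulus decomposition, build a cell covering $N(\mathcal R)$, get a lower bound for $|\Psi|$ off $N(\mathcal R)$ from the factorisation $\Psi(t)=\prod(t-z_j)$, and then match root counts cell by cell. Your use of Rouch\'e for the last step is perfectly fine and arguably cleaner than the paper's contradiction argument.

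The real gap is in your construction of $N(\mathcal R)$. You assert a dichotomy --- any two roots are either within $O(\epsilon\min)$ or separated by $\gg\epsilon\max$ --- and say it is ``exactly the dichotomy powering the at-most-$L$ count''. But Theorem~\ref{thm:tierStruc} does \emph{not} give you this: it only says each ball $B(w,\epsilon|w|)$ contains at most $L$ roots, which is compatible with a long chain of roots spaced $\tfrac{\epsilon}{2}h$ apart. Taking the union of the balls $B(w,\epsilon|w|)$ and passing to connected components via Vitali/chaining can therefore produce components containing far more than $L$ roots, and there is no separation between components beyond the single scale $\epsilon h$. Both failures are fatal downstream: the Rouch\'e step needs the boundary circle $\partial B^{*}$ to be simultaneously close to the $m$ roots in $B$ (so $|\Psi|$ is small, of order $\epsilon_m^{m}$) and far from all other roots on a \emph{strictly larger} scale (so the competing lower bound for $|\widetilde\Psi|$, or equivalently the comparison $|E|\ll|\Psi|$, actually wins).

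The paper's fix is to introduce a whole hierarchy of scales $\epsilon_f\ll\epsilon_1\ll\epsilon_2\ll\cdots\ll\epsilon_{L+1}\ll\epsilon_c$ with $\epsilon_{j+1}=\epsilon_j^{1/L}$, and to build each cell greedily: start from a root, and if some other root lies within $\epsilon_2 h$ absorb it and jump the radius to $\epsilon_2 h$; if a third root lies within $\epsilon_3 h$ of the new centre absorb it and jump to $\epsilon_3 h$; and so on. The ``at most $L$'' bound forces termination by step $L$, and termination at step $b$ means every other root is at distance $\geq\epsilon_{b+1}h\gg\epsilon_b h$ from the cell centre, which is exactly the strong separation. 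This same hierarchy is what makes the numerics close: on $\partial B^{*}$ (the double of a cell with $m$ roots) one has $|\Psi(t)|\lesssim\epsilon_m^{m}(\cdots)$, while the nearest foreign root is $\gtrsim\epsilon_{m+1}h$ away, and the inequality $\epsilon_c^{D-L}\epsilon_{m+1}^{L-m+1}\gg\epsilon_m$ (equivalently $\epsilon_1^{L}\gg\epsilon_f$) is precisely what the choice $\epsilon_{j+1}=\epsilon_j^{1/L}$ is engineered to deliver. You should replace the single-scale Vitali step by this multi-scale absorption procedure; once that is in place, the rest of your outline goes through.
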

\begin{remark}The constant in \eqref{eq:approxFactError} can be made arbitrarily small provided the tier regime, which we define in Section \ref{sec:strata}, is specified with sufficiently strong separation between tiers. More concretely, provided we make suitable restrictions on the coefficients, $x,y_1,\ldots,y_L$, we can ensure the rough factorisation $\widetilde{\Psi}$ is distinct from $\Psi$ and arbitrarily close to it. The conditions on the coefficients required for such a close approximation may be realised according with our height estimation lemma, Lemma \ref{lem:heightEstProcedure}.
\end{remark}

\section{Overview} The critical observation that forms the foundation of this paper is that many of the symmetric functions of the roots of $\Psi$ are vanishing. Those that are sufficiently far from vanishing determine the size of the roots. Additionally, they tell us how many roots might cluster about a point. We denote the roots of $\Psi$ by $\mathcal{R}$, they may appear with multiplicity and for this reason we follow a multi-set convention specified below. For at most $L$ non-zero critical indices $j\in \left\lbrace  D_1,D_2,\ldots,D_L\right\rbrace$, we have that \[S_{j}(\mathcal{R})=\sum_{\substack{|\mathcal{S}|=j\\ \mathcal{S}\subset \mathcal{R}}}\prod_{z\in\mathcal{S}}(-z)\neq 0.\] In particular, these indices are given by $D_1=k_{L}-k_{L-1}$, $D_2=k_{L}-k_{L-2}$, $\ldots$, $D_L=k_L-k_0=k_L$. Throughout, we work with reference to these symmetric functions, as well as those which are vanishing.

In Section \ref{sec:RootNotation}, we introduce notation. In Section \ref{sec:strucModel}, we present a model example for root structure and its relation to the polynomial coefficients (corresponding to the symmetric functions). 

The core of our structural analysis is developed in Part \ref{part:ImpRootStruc}. Indeed, Sections \ref{sec:strata}, \ref{sec:strucInTier}, and \ref{sec:seriesExp} contain the proof of the main structure Theorem \ref{thm:tierStruc}. This analysis pays no great heed to the polynomial coefficients, $x,y_1,\ldots,y_L$. Indeed, the structural results we obtain in Part \ref{part:ImpRootStruc} depend implicitly on the coefficients.

In Section \ref{sec:strata}, we use the vanishing of certain $S_j$ to provide a stratification of the roots into tiers and formulate the symmetric equations with respect to these tiers. 

In Section \ref{sec:strucInTier} we characterise how clustering might occur \emph{within} a given tier $\mathcal{T}$. Here, we analyse the simultaneous (near) vanishing of particular $S_j(\mathcal{T})$ and determine when this is inconsistent with many roots being close together. For this part of the analysis, we form a series expansion of particular $S_j(\mathcal{T})$ in terms of highlighted roots, which we suppose are close together. Section \ref{sec:seriesExp} contains the derivation of the distinguished root expansion that we use to prove Theorem \ref{thm:singleTierStruc}, which concerns the root structure in a single tier.

In Part \ref{part:ExpRootStruc}, we build on the tools developed in Part \ref{part:ImpRootStruc} to reveal more explicit root structure in specific instances. In Section \ref{sec:heightEst}, we outline an algorithm for estimating the heights of root tiers. This height estimation procedure can then be applied to polynomials satisfying the hypotheses of the refined structure Theorem \ref{thm:tierStrucFine}. The result we thus obtain feeds directly into our previous work to give Theorem \ref{thm:tierStrucFine} as a corollary. Section \ref{sec:nearFact} provides an explicit  rough factorisation, $\widetilde{\Psi}=\prod_{l=1}^s\widetilde{\Psi}_l(t)$, of monic polynomials $\Psi$. We show that the roots of $ \Psi$ and the roots of $\widetilde{\Psi}$ almost coincide.

Part \ref{part:oscInt} contains a proof of Theorem \ref{thm:oscIntEstFine} and presents some more refined oscillatory integral estimates. We also give examples for the sharpness of some of the oscillatory integral estimates, these examples are also examples of the sharpness of the structure theorems. 

\textbf{Acknowledgements.} 
The author was supported by The Maxwell Institute Graduate School in Analysis and its Applications, a Centre for Doctoral Training funded by the UK Engineering and Physical Sciences Research Council (Grant EP/L016508/01), the Scottish Funding Council, Heriot-Watt University and the University of Edinburgh. This work was first presented as part of the author's thesis, \cite{mythesis}.

The author would like to thank Jim Wright for the patient introduction of the problem and many helpful conversations at the challenging initial stages of remote working. The author would like to thank Kevin Hughes for some helpful suggestions on the presentation of this work. The author would like to thank Andrew Clausen for the signaling potential connections in the root-finding literature. The author also wishes to shout out Nammy Wams.

\section{Notation}\label{sec:RootNotation}
We here introduce some of the notation that we will be using throughout this paper. We use $\varepsilon$ throughout, often with indexing subscripts, for error terms that appear in the analysis, where we can have suitable control on their size. Any $\varepsilon$ that appears will be the sum of (signed) products of $k$ roots, for some $k$, with size bounds adapted to an appropriate scale. 

To account for repeated roots, it should be understood that we are working with multi-sets. For example, $\lbrace 1,1,1\rbrace$ should be considered a $3$-element (multi-)set. One would more formally write this multi-set as $\lbrace 1^{(1)},1^{(2)},1^{(3)}\rbrace$, indexing set elements by their multiplicity, so we can properly speak about distinct set elements. Another example is the fundamental theorem of algebra, which may be expressed as follows. If $\Psi$ is a degree $k_{L}$ polynomial, then the (multi-)set of roots of $\Psi$, which we denote by $\mathcal{R}$, contains $k_{L}$ elements.

We use $\mathcal{R}$ to denote the roots of $\Psi$. We also use $\mathcal{C}$, $\mathcal{T}$, and $\mathcal{S}$  to denote appropriate subcollections of roots. We use $\mathcal{K}$ and $\mathcal{D}$ to denote sets of integer indices, these will be specified but should be thought of as the exponents of terms in $\Psi$ and the differences between these exponents. 

Throughout, we fix some ordering of the roots of $\Psi(t)=x+y_1t^{k_1}+y_2 t^{k_2}+\ldots+y_Lt^{k_L}$:
\begin{equation}\label{eq:rootOrderDef}|z_1|\geq |z_2|\geq \ldots\geq |z_{k_{L}}|.\end{equation}
Later, we will use the notation $w_1,w_2,\ldots,w_{k_{L}}$ when we wish to take an arbitrary enumeration of the roots of $\Psi$.

Throughout this document $C$ will be used to denote a constant, its value may change from line to line. We use the notation $X\lesssim Y$ or $Y\gtrsim X$ if there exists some implicit constant $C$ such that $X\leq CY$. When we wish to highlight the dependence of the implied constant $C$ on some other parameter, say $C=C(M)$, we will use the notation $X\lesssim_M Y$. We use the notation $X\ll Y$ or $Y\gg X$ if there exists some suitable large constant $D$ such that $DX\leq Y$.

\begin{definition}Throughout, we consider elementary symmetric functions of (a subset of) roots of $\Psi$, \[S_j(\mathcal{A})=\sum_{\mathcal{S}\subset \mathcal{A},|\mathcal{S}|=j}\prod_{z\in \mathcal{S}}(-z),\]
where $\mathcal{A}$ is some subset of the roots of $\Psi$.
\end{definition}

\section{A model example for root structure}\label{sec:strucModel}
Working by example, we now give an indication of the possible root structure of a particular $\Psi$ and see how this relates to the coefficients. We consider $\Psi$ with \[\Psi(t)=y_2\prod_{j=1}^2\left(t^{k_1}-\alpha_j^{k_1}\right).\]
Note that all real polynomials $x+y_1t^{k_1}+ y_2t^{2k_1}$ can be expressed in this way. The roots of the polynomial $\Psi$ are easily recognised: they appear as the $k_1$th roots of $\alpha_1^{k_1}$ and $\alpha_2^{k_1}$. There are some qualitatively different scenarios for the structure of these roots. These depend on the relative size of $\alpha_1$ and $\alpha_2$. They also depend on the cancellation between $\alpha_1^{k_1}$ and $\alpha_2^{k_1}$. Without loss of generality, suppose that $|\alpha_1|\geq|\alpha_2|$. In the case of positive $\alpha_1$ and $\alpha_2$ with $k_1=9$, the roots of $\Psi$ are sketched in Figure \ref{fig:psiRootsArgand}. Throughout the remainder of this section, $\epsilon$ is some suitably small fixed parameter.

\begin{figure}[ht]
\includegraphics[width=0.6\textwidth]{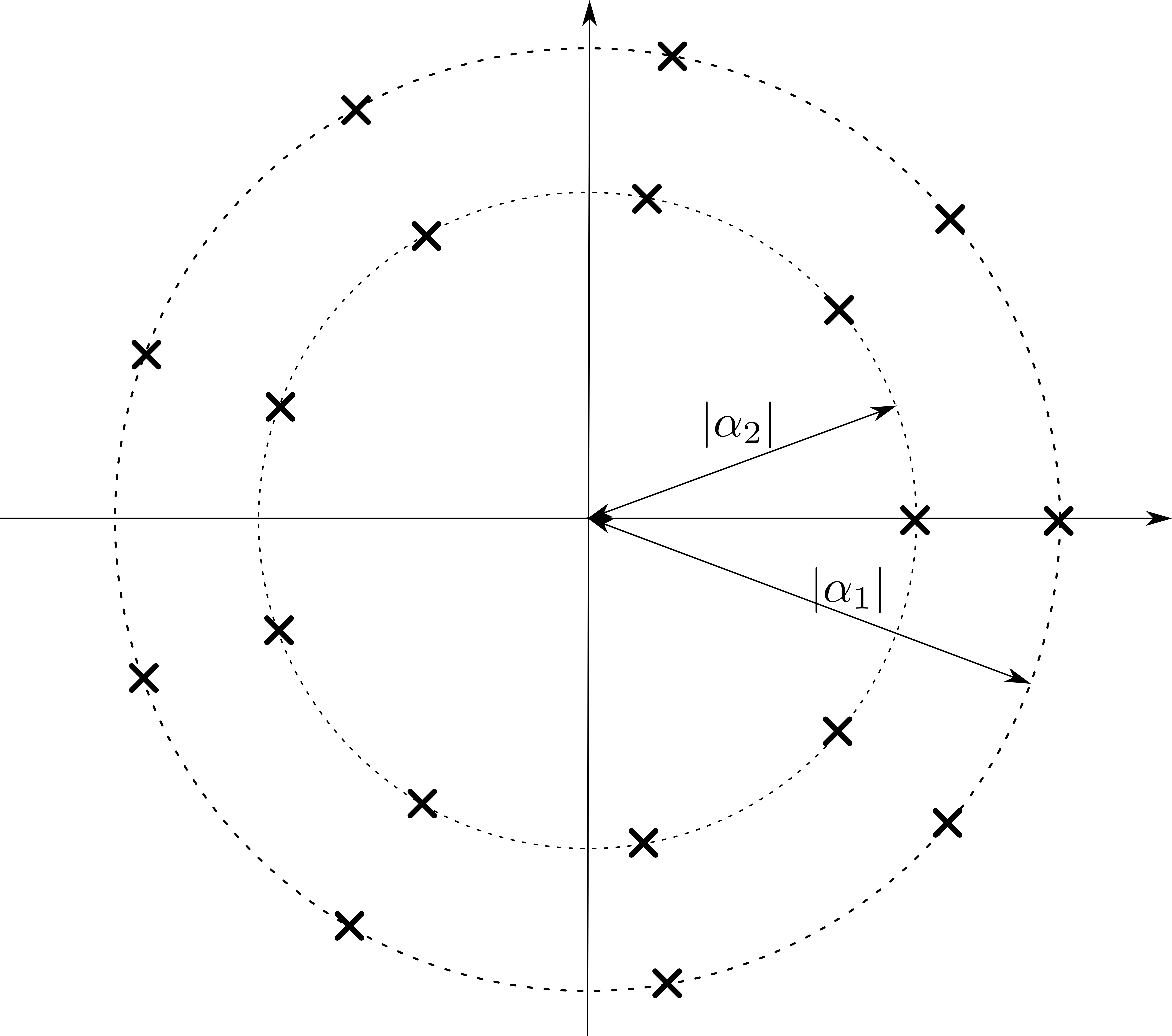}
\centering
\caption{The roots of $\Psi$ for $k_1=9$ and positive $\alpha_j$.}\label{fig:psiRootsArgand}
\end{figure}

Let us first consider the case where \underline{$\bm{\alpha_2=0}$ \textbf{and} $\bm{\alpha_1\neq 0}$}. Here, there are \textbf{two tiers of roots}. There are $k_1$ repeated $0$ roots and if we divide out the corresponding factor $t^{k_1}$ from $\Psi$ we are left with the polynomial $y_2(t^{k_1}-\alpha_1^{k_1})$, from which the location of the non-zero roots can be observed directly as the $k_1$th roots of $-S_{k_1}(\mathcal{R})=-\frac{y_1}{y_2}=\alpha_1^{k_1}$. The non-zero roots are in $\mathcal{T}_1$ and the tier $\mathcal{T}_2$ consists of all zero roots. In this case, for roots $w\in\mathcal{T}_1$, $\bm{B(w,\epsilon|w|)}$ \textbf{contains only the root }$\bm{w}$. Later on, being able to decouple equations for large and small roots in a similar fashion will critically allow us to analyse the structure of roots in distinct tiers. 

The \underline{\textbf{case that} $\bm{\alpha_1^{k_1}}$ \textbf{is close to }$\bm{-\alpha_2^{k_1}}$}, in particular, $|\alpha_1^{k_1}+\alpha_2^{k_1}|\ll |\alpha_1^{k_1}|$. The \textbf{roots of} $\bm{\Psi}$ \textbf{appear in one tier} and they are close to being the $2k_1$th roots of $-\alpha_1^{k_1}\alpha_2^{k_1}$: there are $2k_1=k_2$ roots $w_{j}$  with $|w_{j}|\sim |\alpha_1|$---such roots are in tier $\mathcal{T}_1$. Within $\mathcal{T}_1$ the roots are separated in the sense that, for roots $w\in\mathcal{T}_1$, $\bm{B(w,\epsilon|w|)}$ \textbf{contains only the root }$\bm{w}$. Note that the coefficients of $t^0$, $t^{k_1}$, and $t^{2k_1}$ in $\Psi$ reflect this behaviour in the fact that $\left|\frac{x}{y_2}\right|^\frac{1}{2k_1}=\left|S_{2k_1}(\mathcal{R})\right|^\frac{1}{2k_1}=\left|\alpha_1^{k_1}\alpha_2^{k_1}\right|^\frac{1}{2k_1}\gg \left|\alpha_1^{k_1}+\alpha_2^{k_1}\right|^\frac{1}{k_1}=\left|S_{k_1}(\mathcal{R})\right|^\frac{1}{k_1}=\left|\frac{y_1}{y_2}\right|^\frac{1}{k_1}$. 

The \underline{\textbf{case where} $\bm{|\alpha_1|}$ \textbf{is comparable to} $\bm{|\alpha_2|}$} but the $t^{k_1}$ coefficient of $\Psi$, $-y_2\left(\alpha_1^{k_1}+\alpha_2^{k_1}\right)$, does not display significant cancellation, i.e. $|\alpha_1^{k_1}+\alpha_2^{k_1}|\sim |\alpha_1^{k_1}|$. This scenario has \textbf{one tier of roots}, $\mathcal{T}_1=\lbrace z_1,z_2,\ldots,z_{k_2}\rbrace$. Here, all the roots are of comparable size and $\bm{B(w,\epsilon|w|)}$\textbf{ contains at most two roots for any root }$\bm{w\in\mathcal{T}_1$}. Two roots which might get close are those roots which share the same argument if $\alpha_1^{k_1}$ and $\alpha_2^{k_1}$ are real valued with the same sign. The coefficients of $t^0$, $t^{k_1}$, and $t^{2k_1}$ in $\Psi$ reflect this behaviour in the fact that \[\left|\frac{x}{y_2}\right|^\frac{1}{2k_1}=\left|S_{2k_1}(\mathcal{R})\right|^\frac{1}{2k_1}=\left|\alpha_1^{k_1}\alpha_2^{k_1}\right|^\frac{1}{2k_1}\sim \left|\alpha_1^{k_1}+\alpha_2^{k_1}\right|^\frac{1}{k_1}=\left|\frac{y_1}{y_2}\right|^\frac{1}{k_1}=\left|S_{k_1}(\mathcal{R})\right|^\frac{1}{k_1}.\]

Finally, we consider the \underline{\textbf{case where} $\bm{0\neq |\alpha_2|\ll |\alpha_1|}$}. Here roots of $\Psi$ appear in \textbf{two tiers}: there are $k_1$ roots $w_{j,2}$  with $|w_{j,2}|=|\alpha_2|$---such roots are in tier $\mathcal{T}_2$---and the remaining $k_1$ roots $w_{l,1}$ satisfy $|w_{l,1}|=|\alpha_1|$---such roots are in tier $\mathcal{T}_1$. Within each tier the roots are separated in the sense that, $\bm{B(w,\epsilon|w|)}$\textbf{ contains only one root for any root }$\bm{w\in\mathcal{T}_j}$. 
We also have separation between tiers: given roots $w_{1},\in\mathcal{T}_1$ and $w_{2}\in\mathcal{T}_2$, $|w_{1}|\gg |w_{2}|$. Note that the coefficients of $t^0$, $t^{k_1}$, and $t^{2k_1}$ in $\Psi$ reflect this behaviour in the fact that \[\left|S_{2k_1}(\mathcal{R})\right|^\frac{1}{2k_1}=\left|\frac{x}{y_2}\right|^\frac{1}{2k_1}=\left|\alpha_1\alpha_2\right|^\frac{1}{2}< \left|\frac{y_1}{y_2}\right|^\frac{1}{k_1}=\left|S_{k_1}(\mathcal{R})\right|^\frac{1}{k_1}=\left|\alpha_1^{k_1}+\alpha_2^{k_1}\right|^\frac{1}{k_1}\sim |\alpha_1|\]
and 
\[\left|S_{k_1}(\mathcal{T}_2)\right|^\frac{1}{k_1}=|\alpha_2|\sim\left|\frac{\alpha_1^{k_1}\alpha_2^{k_1}}{\alpha_1^{k_1}+\alpha_2^{k_1}}\right|^\frac{1}{k_1}=\left|\frac{x}{y_1}\right|^\frac{1}{k_1}=\left|\frac{S_{2k_1}(\mathcal{R})}{S_{k_1}(\mathcal{R})}\right|^\frac{1}{k_1}\]
\[\ll \left|\frac{y_1}{y_2}\right|^\frac{1}{k_1}=\left|\alpha_1^{k_1}+\alpha_2^{k_1}\right|^\frac{1}{k_1}\sim |\alpha_1|=\left|S_{k_1}(\mathcal{T}_1)\right|^\frac{1}{k_1}.\]
These last equations correspond to the separation of certain height estimates. Such height estimates will not form a part of our initial structural analysis, but we eventually consider them more explicitly in Section \ref{sec:heightEst}.

\part{Implicit root structure}\label{part:ImpRootStruc}
In this part, we work to uncover some of the root structure of real polynomials
\[\Psi(t)=x+y_1t^{k_1}+\ldots+y_Lt^{k_L},\]
with exponents taken from a fixed set $\lbrace 0,k_1,\ldots,k_L\rbrace$ and $x,y_L\neq 0$. Throughout, we denote the set of roots of $\Psi$ by $\mathcal{R}$. Essentially, as there are at most $L+1$ non-vanishing coefficients, we will find that at most $L$ roots can coalesce about a point. Throughout this part, our analysis will be carried out with respect to particular reference heights, $h_1,h_2,\ldots,h_L$, which depend implicitly on a given polynomial $\Psi$. We postpone the discussion of more explicit tools for locating roots to Part \ref{part:ExpRootStruc}.

\section{Root tier stratification}\label{sec:strata}
In this section, we work to stratify the roots into tiers. The stratification of roots is suggested by the example in Section \ref{sec:strucModel}. With this in mind, we define certain reference heights $h_j$ for the roots. Before proceeding, let us introduce some useful indexing notation. 
\begin{definition}
 We set \begin{equation}\label{eq:dDef}  \begin{aligned}&d_0(\mathcal{R})=0,\,&& d_1(\mathcal{R})=k_{L}-k_{L-1},\,&&\; \ldots\;&& d_{L}(\mathcal{R})=k_1-k_0,\quad \text{and}\\ &D_0(\mathcal{R})=d_0(\mathcal{R}),\,&& D_1(\mathcal{R})=d_0(\mathcal{R})+d_1(\mathcal{R}),\,&&\; \ldots\;&& D_L(\mathcal{R})=d_1(\mathcal{R})+\ldots+d_L(\mathcal{R}).\end{aligned} \end{equation}

We also set $\mathcal{D}(\mathcal{R})=\left\lbrace 0, D_1(\mathcal{R}),\ldots,D_L(\mathcal{R})\right\rbrace$.
\end{definition}
Note that $D_j(\mathcal{R})=k_L-k_{L-j}$, although we expressed it slightly differently in the definition to emphasise that it is the sum of consecutive $d_i$.

Throughout this section, we will be working with reference to \emph{all} roots, $\mathcal{R}$, and so we suppress the argument of $D_j=D_j(\mathcal{R})$ and $d_j=d_j(\mathcal{R})$. Similarly, in this section, we set $\mathcal{D}=\mathcal{D}(\mathcal{R})$. We also write $d(j)=d_j$ and $D(j)=D_j$. 

\begin{definition}\label{def:hj}According with the root ordering, \eqref{eq:rootOrderDef}, we define the reference heights $h_1=|z_1|,h_2=|z_{D(1)+1}|,h_3=|z_{D(2)+1}|,\ldots, h_{L}=|z_{D(L-1)+1}|$. \end{definition}

The following Lemma \ref{lem:cancComp} shows exactly why our Definition \ref{def:hj} is a sensible one.

\begin{lemma}\label{lem:cancComp}Suppose that, for some $1\leq j\leq k_L$, $S_j(\mathcal{R})=0$. Then $|z_{j+1}|\sim |z_j|$.

As a consequence, since $S_j(\mathcal{R})=0$ for $j\notin\mathcal{D}=\lbrace 0,D_1,D_2,\ldots,D_L\rbrace$, we have that \begin{equation}\label{eq:trancheComp}\begin{aligned}&h_1&&=|z_1|&&\sim |z_2|&&\sim\ldots&&\sim |z_{D(1)}|,\\&h_2&&=|z_{D(1)+1}|&&\sim |z_{D(1)+2}|&&\sim\ldots&&\sim |z_{D(2)}|,\\&  && && && &&\vdots  &&\\ &h_L&&=|z_{D(L-1)+1}|&&\sim |z_{D(L-1)+2}|&&\sim\ldots&&\sim |z_{D(L)}|.\end{aligned}\end{equation}
\end{lemma}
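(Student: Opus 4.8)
The plan is to prove this via the elementary symmetric function identities relating roots and coefficients. Write $\Psi(t)=y_L t^{k_L}+y_{L-1}t^{k_{L-1}}+\ldots+y_1 t^{k_1}+x$, so that dividing by $y_L$ and using that $\Psi(t)=y_L\prod_{i=1}^{k_L}(t-z_i)$, the coefficient of $t^{k_L-j}$ equals $y_L S_j(\mathcal{R})$. Since the only non-vanishing coefficients of $\Psi$ sit at exponents $k_L,k_{L-1},\ldots,k_1,0$, i.e. at $t^{k_L-j}$ for $j\in\mathcal{D}=\{0,D_1,\ldots,D_L\}$, we get immediately that $S_j(\mathcal{R})=0$ for every $j\notin\mathcal{D}$ with $1\leq j\leq k_L$. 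Thus the ``consequence'' part of the lemma is a formal corollary of the first sentence once we show that vanishing of a single $S_j$ forces $|z_{j+1}|\sim|z_j|$: one applies this for $j=D_1+1,\ldots,D_2-1$ to chain $|z_{D_1+1}|\sim\ldots\sim|z_{D_2}|$ and so on, and absorbs the implied constants over the (fixed, bounded by $k_L$) length of each chain.

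So the heart of the matter is the first sentence: if $S_j(\mathcal{R})=0$ then $|z_j|\sim|z_{j+1}|$, where $|z_1|\geq|z_2|\geq\ldots\geq|z_{k_L}|$. I would argue by contrapositive / separation: suppose $|z_j|\gg|z_{j+1}|$, i.e. there is a genuine gap in magnitude after the $j$-th root. Then in the sum $S_j(\mathcal{R})=\sum_{|\mathcal{S}|=j}\prod_{z\in\mathcal{S}}(-z)$, the unique term $\prod_{i=1}^j(-z_i)$ built from the $j$ largest roots dominates all others: any other $j$-subset $\mathcal{S}$ must omit some $z_i$ with $i\leq j$ and include some $z_l$ with $l\geq j+1$, hence $\big|\prod_{z\in\mathcal{S}}z\big|\leq |z_1\cdots z_{j-1}|\,|z_{j+1}| \leq |z_1\cdots z_j|\cdot\frac{|z_{j+1}|}{|z_j|}$. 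Summing over the at most $\binom{k_L}{j}$ such subsets and using $|z_j|/|z_{j+1}|$ large (larger than $\binom{k_L}{j}$, say), the off-diagonal total is strictly smaller in modulus than $|z_1\cdots z_j|$, so $S_j(\mathcal{R})\neq 0$ by the triangle inequality. This contradiction gives $|z_j|\not\gg|z_{j+1}|$, and since $|z_j|\geq|z_{j+1}|$ automatically, we conclude $|z_j|\sim|z_{j+1}|$.

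The main obstacle — really the only subtle point — is pinning down what ``$\sim$'' and ``$\gg$'' mean here, since the constants are not universal: the bound $|z_j|/|z_{j+1}|>\binom{k_L}{j}$ needed to force non-vanishing depends on $k_L$ and $j$, which is fine because $\{k_0,\ldots,k_L\}$ is a fixed set, but one must be careful that the constants used to \emph{chain} the comparisons in \eqref{eq:trancheComp} (over runs of consecutive vanishing indices of length up to $k_L$) stay under control; they do, since a product of a bounded number of bounded constants is bounded, again because $k_L$ is fixed. A secondary point to state cleanly is that $x\neq 0$ and $y_L\neq 0$ guarantee $S_0=1\neq 0$ and $S_{k_L}(\mathcal{R})=x/y_L\neq 0$, so the endpoints $h_1$ and the smallest tier are well-defined and all roots are non-zero in the generic situation; more precisely one only needs that $\mathcal{D}$ is exactly the set of exponent-gaps, which is immediate from the definition of $\Psi$. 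I would present the dominant-term estimate as the single displayed computation and let the rest follow.
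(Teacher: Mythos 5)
Your proposal is correct and follows essentially the same route as the paper: both isolate the dominant term $z_1\cdots z_j$ in $S_j(\mathcal{R})$, bound every other $j$-fold product by $|z_1\cdots z_{j-1}z_{j+1}|$ times the count $\binom{k_L}{j}$, and conclude $|z_j|\lesssim|z_{j+1}|$ (the paper states this directly via the triangle inequality where you phrase it as a contrapositive, and it separately notes the degenerate case $z_j=0$, which your framing also covers). Your added remarks on why $S_j$ vanishes off $\mathcal{D}$ and on chaining the comparisons with controlled constants are accurate and consistent with the paper's remark following the lemma.
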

\begin{proof}
Suppose that $S_j(\mathcal{R})=0$ and consider the corresponding root $z_j$. There are two cases to consider. In the case that $|z_j|=0$, the desired comparison follows directly from the inequality $|z_{j+1}|\leq |z_{j}|=0$.

It remains to consider the case that $z_j\neq 0$. The largest root outwith $\lbrace z_1,z_2,\ldots,z_j\rbrace$ is $z_{j+1}$. Additionally, the largest $j-1$ roots within $\lbrace z_1,z_2,\ldots,z_j\rbrace$ are $z_1,z_2,\ldots,z_{j-1}$. Thus we find that 
\[0=|S_j|\geq |(-z_1)(-z_2)\ldots(-z_j)|-\binom{k_L}{j}|z_1\ldots z_{j-1}z_{j+1}|.\]
Rearranging and dividing through by $|(-z_1)(-z_2)\ldots(-z_j)|$ gives the desired inequality.
\end{proof}

\begin{remark}
The constants of comparison we obtain in \eqref{eq:trancheComp} can be chosen to depend on the indices $k_1,k_2,\ldots,k_L$. \end{remark}

In the statement of Theorem \ref{thm:tierStruc}, we said that tiers of roots are well separated. It is thus natural, and in accordance with Lemma \ref{lem:cancComp}, to define the tiers of roots relative to the separation of the reference heights.
 \begin{definition}\label{def:tierDef}If we have that \[h_1\sim h_2\sim\ldots \sim h_{l(1)}\]\[\gg h_{l(1)+1}\sim \ldots \sim h_{l(1)+l(2)}\]\[\vdots\] \[\gg h_{l(1)+\ldots+l(s-1)+1}\sim \ldots \sim h_{l(1)+\ldots+l(s)}=h_L,\]
then we define the tiers as follows. First, let $l(0)=0$ and $L(i)=l(0)+l(1)+\ldots+l(i)$. Then we set \[\mathcal{T}_i= \left\lbrace z_{D(L(i-1))+1},z_{D(L(i-1))+2}\ldots, z_{D(L(i))}\right \rbrace.\]
We also define the reference height for the tiers by
\[h(\mathcal{T}_r)=h_{L(r-1)+1}.\]
 \end{definition}
This definition establishes the first part of our structure theorem, Theorem \ref{thm:tierStruc}. Indeed, by Lemma \ref{lem:cancComp}, we have that, for any choice of $w_i\in \mathcal{T}_i$
\[|w_1|\gg |w_2|\gg\ldots\gg |w_s|.\]

\begin{remark}
The choice of separation constants defining the tier regime must be suitably strong. Having a well separated tier regime will ensure we have suitable control on error terms. In this section, up to an error term, we derive explicit equations for the symmetric functions of roots in a given tier. The error terms must be small enough to feed into our later Theorem \ref{thm:singleTierStruc}. For the  rough factorisation of monic $\Psi$, Theorem \ref{thm:approxFact}, the definition of a tier regime requires much stronger separation, as we will see in Section \ref{sec:nearFact}.\end{remark}

Analogous to Definition \ref{eq:dDef}, it will be useful to have distinguished indices for each tier. These distinguished indices will later allow us to pick out critical symmetric functions of roots within each tier.

\begin{definition}\label{def:distIndices}
In the tier $\mathcal{T}_r$, we set  \begin{equation} \begin{split} d_0(\mathcal{T}_r)=0,\, d_1(\mathcal{T}_r)=d_{L(r-1)+1},\, d_2(\mathcal{T}_r)=d_{L(r-1)+2},\; \ldots\; d_{l(r)}(\mathcal{T}_r)=d_{L(r)},\, \text{and}\\D_0(\mathcal{T}_r)=0,\, D_1(\mathcal{T}_r)=d_0(\mathcal{T}_r)+d_1(\mathcal{T}_r),\, \ldots\; D_{l(r)}(\mathcal{T}_r)=d_1(\mathcal{T}_r)+\ldots+d_{l(r)}(\mathcal{T}_r).\; \end{split} \end{equation}
There is an appropriate $L(\mathcal{T}_r)=l(r)=L(r)-L(r-1)$.
 Note that $D_{l(r)}(\mathcal{T}_r)=|\mathcal{T}_r|=D(L(r))-D(L(r-1))$, and we also denote this by $D(\mathcal{T}_r)$.

If $\mathcal{T}=\mathcal{T}_r$, then the distinguished indices for the symmetric functions in $\mathcal{T}$ are given by $\mathcal{D}(\mathcal{T})=\left\lbrace 0, D_1(\mathcal{T}),\ldots, D_{L( \mathcal{T})} (\mathcal{T})\right \rbrace$.
\end{definition}

We now present the main result of this section, this lemma characterises the critical symmetric functions with respect to roots within each tier.

\begin{lemma}\label{lem:symEqWithinTier}According with Definition \ref{def:tierDef}, fix a tier $\mathcal{T}=\mathcal{T}_r$. 

For the positive critical indices $j\in \mathcal{D}(\mathcal{T})$, \[S_{j}(\mathcal{T})=c_j(\mathcal{T})+\varepsilon_{j}(\mathcal{T}),\]
where $|\varepsilon_{j}(\mathcal{T})|\ll h(\mathcal{T})^j$ and \[c_j(\mathcal{T})=\frac{S_{D(L(r-1))+j}(\mathcal{R})}{S_{D(L(r-1))}(\mathcal{T}_1\cup\mathcal{T}_1\cup\ldots \mathcal{T}_{r-1})}.\]

For  $j\notin \mathcal{D}(\mathcal{T})$, with $1\leq j\leq |\mathcal{T}|$, we have that 
\[S_{j}(\mathcal{T})=\varepsilon_{j}(\mathcal{T}),\]
where $|\varepsilon_{j}(\mathcal{T})|\ll h(\mathcal{T})^j$.
\end{lemma}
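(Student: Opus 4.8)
The plan is to analyse the symmetric functions $S_j(\mathcal{R})$ of \emph{all} the roots by splitting each $j$-element subset $\mathcal{S}\subset\mathcal{R}$ according to how it distributes among the tiers $\mathcal{T}_1,\dots,\mathcal{T}_s$. The starting point is the product formula
\[
S_N(\mathcal{R}) \;=\; \sum_{a_1+\dots+a_s = N} \prod_{i=1}^s S_{a_i}(\mathcal{T}_i),
\]
valid because $\mathcal{R}$ is the disjoint union of the tiers. The key quantitative input is the tier separation from Definition \ref{def:tierDef}, together with the size bounds $|z|\sim h(\mathcal{T}_i)$ for $z\in\mathcal{T}_i$ coming from Lemma \ref{lem:cancComp}: this gives $|S_{a}(\mathcal{T}_i)| \lesssim h(\mathcal{T}_i)^{a}$, with the binomial constant depending only on the $k_j$. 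Because $h(\mathcal{T}_1)\gg h(\mathcal{T}_2)\gg\dots\gg h(\mathcal{T}_s)$, in any such sum a term is negligible relative to another whenever it puts \emph{fewer} elements into an earlier (larger) tier; so the dominant contribution to $S_N(\mathcal{R})$ comes from the unique way of ``greedily filling'' the largest tiers first — i.e. taking all of $\mathcal{T}_1,\dots,\mathcal{T}_{r-1}$ and then $N - D(L(r-1))$ elements from $\mathcal{T}_r$, provided $N$ lands in the range $[D(L(r-1)), D(L(r))]$. This is exactly the mechanism already illustrated in the model example of Section \ref{sec:strucModel}.

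Concretely, I would fix the tier $\mathcal{T}=\mathcal{T}_r$ and an index $j$ with $1\le j \le |\mathcal{T}|$, set $N = D(L(r-1)) + j$, and isolate the term $S_{D(L(r-1))}(\mathcal{T}_1\cup\dots\cup\mathcal{T}_{r-1})\cdot S_j(\mathcal{T}_r)\cdot 1$ (all later tiers contributing $S_0 = 1$). Note $S_{D(L(r-1))}(\mathcal{T}_1\cup\dots\cup\mathcal{T}_{r-1})$ is, up to a harmless error, the product of the top elementary symmetric functions of the earlier tiers and is comparable to $\prod_{i<r} h(\mathcal{T}_i)^{D(\mathcal{T}_i)}$, in particular nonzero and of a definite size. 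Every other term in the expansion of $S_N(\mathcal{R})$ either (a) takes fewer than $D(L(r-1))$ elements total from $\mathcal{T}_1\cup\dots\cup\mathcal{T}_{r-1}$ — hence is smaller by a factor that is a positive power of a ratio $h(\mathcal{T}_{i+1})/h(\mathcal{T}_i)\ll 1$ — or (b) takes some elements from tiers $\mathcal{T}_{r+1},\dots,\mathcal{T}_s$, which again costs a power of such a ratio relative to taking those same slots in $\mathcal{T}_r$. Solving for $S_j(\mathcal{T})$ gives
\[
S_j(\mathcal{T}) = \frac{S_{D(L(r-1))+j}(\mathcal{R})}{S_{D(L(r-1))}(\mathcal{T}_1\cup\dots\cup\mathcal{T}_{r-1})} + \varepsilon_j(\mathcal{T}) = c_j(\mathcal{T}) + \varepsilon_j(\mathcal{T}),
\]
where $\varepsilon_j(\mathcal{T})$ collects the ratios of the discarded terms to the isolated denominator; each is bounded by $h(\mathcal{T})^j$ times a small constant (a power of a tier ratio), and summing the boundedly many such terms keeps $|\varepsilon_j(\mathcal{T})| \ll h(\mathcal{T})^j$, as required. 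Finally, for $j\notin\mathcal{D}(\mathcal{T})$ I would observe that $N = D(L(r-1)) + j$ is then \emph{not} in $\mathcal{D}(\mathcal{R})$ (since the $\mathcal{D}(\mathcal{T}_r)$ are precisely the ``gaps'' of $\mathcal{D}(\mathcal{R})$ shifted by $D(L(r-1))$, by Definition \ref{def:distIndices}), so $S_N(\mathcal{R}) = 0$ by the vanishing of non-critical symmetric functions; dividing by the denominator leaves $S_j(\mathcal{T}) = \varepsilon_j(\mathcal{T})$ with the same bound.

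The main obstacle is the bookkeeping of the error term: one must verify that \emph{every} non-dominant term in the multi-tier expansion of $S_N(\mathcal{R})$, after division by $S_{D(L(r-1))}(\mathcal{T}_1\cup\dots\cup\mathcal{T}_{r-1})$, is genuinely $\ll h(\mathcal{T}_r)^j$ — i.e. that ``not greedily filling the top tiers'' always costs at least one factor of a small tier-ratio and that the combinatorial constants (binomial coefficients in the $k_j$, the number of terms) do not overwhelm this gain. This is where the requirement, flagged in the remarks after Definition \ref{def:tierDef}, that the tier regime be defined with \emph{sufficiently strong} separation is used: one chooses the separation constants large enough, as a function of $k_1,\dots,k_L$ only, that each small ratio beats all the combinatorial factors. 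A secondary point requiring care is controlling $S_{D(L(r-1))}(\mathcal{T}_1\cup\dots\cup\mathcal{T}_{r-1})$ itself from below, which again follows from the same greedy-domination principle applied one level up (its leading term is $\prod_{i<r} S_{D(\mathcal{T}_i)}(\mathcal{T}_i)$, a product of nonzero top symmetric functions), so the division is legitimate.
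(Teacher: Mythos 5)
Your proposal is correct and follows essentially the same route as the paper: expand $S_{D(L(r-1))+j}(\mathcal{R})$ over the tiers, isolate the dominant term $S_{D(L(r-1))}(\mathcal{T}_1\cup\ldots\cup\mathcal{T}_{r-1})\,S_j(\mathcal{T}_r)$, bound every other contribution by a positive power of a tier ratio after dividing through by the (nonzero) top symmetric function of the earlier tiers, and invoke $S_N(\mathcal{R})=0$ for non-critical $N$ to obtain the vanishing case. If anything, your full multinomial expansion treats the terms that remain inside $\mathcal{T}_1\cup\ldots\cup\mathcal{T}_r$ but omit some of the large roots more explicitly than the paper's two-way split does.
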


\begin{proof}We consider the symmetric functions of order $D(L(r-1))+j$ for $1\leq j\leq D(\mathcal{T})$. For ease of notation, we set $D=D(L(r-1))$ and we denote the reference height $h(\mathcal{T})=h_{L(r-1)+1}$ by $h$. 

Splitting the sum defining symmetric functions according to the size of the summands, we have that
\begin{equation}\label{eq:symEqForTier}S_{D+j}(\mathcal{R})=S_D(\mathcal{T}_1\cup\mathcal{T}_2\cup\ldots\cup\mathcal{T}_{r-1})S_j(\mathcal{T})+ \sum_{|\mathcal{S}'|=D+j}\prod_{z\in \mathcal{S}'}(-z),\end{equation}
where the sum in $\mathcal{S}'\subset\mathcal{R}$ is taken over $\mathcal{S}'\cap\left(\mathcal{T}_1\cup\mathcal{T}_2\cup\ldots \mathcal{T}_{r-1}\cup \mathcal{T}_{r}\right)^c\neq \emptyset$. Any $\mathcal{S}'$ appearing in this proof should be understood as subject to these restrictions. Note that \[|S_D(\mathcal{T}_1\cup\mathcal{T}_2\cup\ldots\cup\mathcal{T}_{r-1})|=|z_1z_2\ldots z_{D}|.\] 

Every product appearing in \[\sum_{|\mathcal{S}'|=D+j}\prod_{z\in \mathcal{S}'}(-z)\]
contains at least one root, $z$, outwith $\mathcal{T}_1\cup\mathcal{T}_2\cup\ldots \cup\mathcal{T}_{r}$ and, for any such $z$, $|z|\ll h$. The largest $D+j-1$ roots are $z_1,z_2,\ldots, z_{D+j-1}$. Thus we see that  \[\left|\sum_{|\mathcal{S}'|=D+j}\prod_{z\in \mathcal{S}'}(-z)\right|\ll \left|z_1z_2\ldots z_{D+j-1} h\right|.\] Normalising and rearranging \eqref{eq:symEqForTier} we thus see that \[S_j(\mathcal{T})=  S_D(\mathcal{T}_1\cup\mathcal{T}_1\cup\ldots\cup\mathcal{T}_{r-1})^{-1}S_{D+j}(\mathcal{R})+ \varepsilon_{j}(\mathcal{T}),\]
where \[\varepsilon_{j}(\mathcal{T})=-S_D(\mathcal{T}_1\cup\mathcal{T}_1\cup\ldots\cup\mathcal{T}_{r-1})^{-1}\sum_{|\mathcal{S}'|=D+j}\prod_{z\in \mathcal{S}'}(-z).\] To conclude, we observe that \[\left|\varepsilon_{j}(\mathcal{T})\right|\ll \left|z_{D+1}z_{D+2}\ldots z_{D+j-1}h\right|\sim h^j,\]
as required.
\end{proof}

\section{Root structure within tiers}\label{sec:strucInTier}
Recall how we defined the tiers of roots in Definition \ref{def:tierDef}, which immediately gives the separation between tiers and part of Theorem \ref{thm:tierStruc}. In the regime with tiers $\mathcal{T}_1,\mathcal{T}_2,\ldots,\mathcal{T}_{s}$, we have that 
\[h(\mathcal{T}_1)\gg h(\mathcal{T}_2)\gg\ldots \gg h(\mathcal{T}_s),\]
where $h(\mathcal{T}_r)=\max_{w\in\mathcal{T}_r}|w|$. To complete the proof of Theorem \ref{thm:tierStruc}, we must establish the structure of roots \emph{within} a tier. In this section, we prove Theorem \ref{thm:singleTierStruc}, which, combined with Lemma \ref{lem:symEqWithinTier}, gives the structure Theorem \ref{thm:tierStruc}.

Let us fix $\mathcal{T}=\mathcal{T}_r$ for some $r$, where we have that $h(\mathcal{T})=h_{L(r-1)+1}>0$. For ease of notation, throughout this section, we denote by $h$ the reference height for roots in $\mathcal{T}_r$, $h=h(\mathcal{T}_r)=h_{L(r-1)+1}$.
With care, we can leverage the statement of Lemma \ref{lem:symEqWithinTier} to determine how roots in $\mathcal{T}$ may cluster about a point. 

Recall from Lemma \ref{lem:symEqWithinTier} that, for $j\notin\mathcal{D}(\mathcal{T})$, $S_j(\mathcal{T})$ is near vanishing. The following theorem tells us what kind kind of clustering can occur according to how many symmetric functions of $\mathcal{T}$ are far from vanishing. In particular, we consider what happens when the symmetric functions, $S_j(\mathcal{T})$, are near vanishing away from some (unspecified) set of exceptional indices $\widetilde{\mathcal{D}}(\mathcal{T})\subset \mathcal{D}(\mathcal{T})$.

\begin{theorem}\label{thm:singleTierStruc}Suppose that, for $0\leq j \leq D(\mathcal{T})$, \[S_j(\mathcal{T})=\varepsilon_j(\mathcal{T}),\text{ for }j\notin \widetilde{\mathcal{D}}(\mathcal{T}),\] where $|\varepsilon_j(\mathcal{T})|\ll h^j$ and $\widetilde{\mathcal{D}}(\mathcal{T})\subset \mathcal{D}(\mathcal{T})$. Then at most $\tilde{L}(\mathcal{T})\coloneqq|\widetilde{\mathcal{D}}(\mathcal{T})|-1\leq |\mathcal{D}(\mathcal{T})|-1=L(\mathcal{T})$ roots in $\mathcal{T}$ can cluster about a point. More precisely, there is is some suitably small $\epsilon>0$ such that, for any root $w\in\mathcal{T}$, we have $|w|\sim h>0$ and $B(w,\epsilon h)$ contains at most $\tilde{L}(\mathcal{T})$ roots $w_i\in\mathcal{T}$.
\end{theorem}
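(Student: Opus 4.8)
The plan is to argue by contradiction: suppose more than $\tilde{L}(\mathcal{T})$ roots cluster in some ball $B(w,\epsilon h)$, and derive a contradiction with the near-vanishing of the symmetric functions $S_j(\mathcal{T})$ for $j \notin \widetilde{\mathcal{D}}(\mathcal{T})$. First I would show the easy half: any root $w\in\mathcal{T}$ satisfies $|w|\sim h$. Since the largest $|\mathcal{T}|$-many roots of the tier have modulus $\sim h$ by Lemma \ref{lem:cancComp} (and the tier-reference-height definition), it suffices to bound the smallest root in $\mathcal{T}$ from below; this follows from the fact that $S_{D(\mathcal{T})}(\mathcal{T})$ is \emph{not} near-vanishing — indeed $D(\mathcal{T})=D_{l(r)}(\mathcal{T}_r)\in\mathcal{D}(\mathcal{T})$ always, so $S_{D(\mathcal{T})}(\mathcal{T})\sim h^{D(\mathcal{T})}$ by Lemma \ref{lem:symEqWithinTier} (whereas a tier with a too-small root would force $|S_{D(\mathcal{T})}(\mathcal{T})| = |\prod_{z\in\mathcal{T}}z| \ll h^{D(\mathcal{T})}$).

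The core of the argument uses the distinguished-root series expansion promised in Section \ref{sec:seriesExp} (which I am permitted to assume, via Theorem \ref{thm:singleTierStruc}'s stated reliance on it). Suppose for contradiction that $N := \tilde{L}(\mathcal{T})+1 = |\widetilde{\mathcal{D}}(\mathcal{T})|$ roots $w_1,\dots,w_N \in \mathcal{T}$ all lie in $B(w,\epsilon h)$. Write $w_i = w + \zeta_i$ with $|\zeta_i|\le 2\epsilon h$, and expand each relevant $S_j(\mathcal{T})$ around the common centre $w$: separating the clustered roots from the rest, $S_j(\mathcal{T})$ becomes a polynomial in $w$ whose coefficients are (up to combinatorial factors and sign) the elementary symmetric functions of the non-clustered roots of $\mathcal{T}$ together with the $e_k(\zeta_1,\dots,\zeta_N)$, each of the latter of size $\ll (\epsilon h)^k$ and hence negligible compared to $h^k$. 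The upshot is a system: for each $j\notin\widetilde{\mathcal{D}}(\mathcal{T})$ (there are at least $D(\mathcal{T}) - N + 1$ such indices in $\{0,1,\dots,D(\mathcal{T})\}$, with $j=0$ trivial), the quantity $S_j(\mathcal{T})$ — forced to be $\ll h^j$ — equals a near-Vandermonde expression in $w$ and the surviving symmetric data. The combinatorics of which $S_j$ vanish, matched against the single free large parameter $w$ (size $\sim h$) and the $N$ small parameters $\zeta_i$, overdetermines the configuration: one obtains $N$ "large" equations' worth of consistency requirements but only $N-1$ degrees of freedom after factoring out $w$, forcing a nontrivial polynomial identity in $w$ with coefficients of size $\sim h^{(\cdot)}$ to be $\ll$ its own leading term, which is impossible once $\epsilon$ is small enough depending only on $k_1,\dots,k_L$.

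Concretely, I would package this as follows. After centring, the clustered block contributes a factor $\prod_{i=1}^N (t - w_i) = (t-w)^N + O((\epsilon h)\,(t-w)^{N-1} + \cdots)$ to the formal generating polynomial $\prod_{z\in\mathcal{T}}(t-z)$, whose coefficients are the $\pm S_j(\mathcal{T})$. Reading off coefficients and using that exactly the indices in $\widetilde{\mathcal{D}}(\mathcal{T})$ may be non-negligible, I get that the "deflated" polynomial (the product over the $|\mathcal{T}|-N$ non-clustered roots) must have all but $|\widetilde{\mathcal{D}}(\mathcal{T})| - N = 0$ of its top coefficients near-vanishing on the scale $h$ — but that polynomial has degree $|\mathcal{T}| - N$ and its top coefficient is $1$, and the near-vanishing of the next $D(\mathcal{T}) - N$ coefficients down to the appropriate level, combined with $w\sim h$ playing the role of a repeated near-root of multiplicity $N$, is incompatible with $S_{D(\mathcal{T})}(\mathcal{T})\sim h^{D(\mathcal{T})}$. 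The bookkeeping that the number of non-negligible $S_j$ on $\mathcal{T}$ is exactly $|\widetilde{\mathcal{D}}(\mathcal{T})|$ and that a cluster of size $N$ eats up $N$ of them is the crux.

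The main obstacle I anticipate is making the multi-parameter perturbation rigorous: one must choose the cluster-radius $\epsilon$, the tier-separation constants (which control the $\varepsilon_j(\mathcal{T})$ from Lemma \ref{lem:symEqWithinTier}), and the combinatorial constants in the expansion in the right order — $\epsilon$ small depending on the $k_i$ and on the separation, the separation strong depending only on the $k_i$ — so that no circularity arises, and to verify that the error terms $\varepsilon_j(\mathcal{T})$ inherited from Lemma \ref{lem:symEqWithinTier} plus the cluster errors $e_k(\zeta)$ genuinely stay below the threshold $h^j$ uniformly. The clean way to do this is to treat the whole identity as a statement about the polynomial $\prod_{z\in\mathcal{T}}(1 - z/(hu))$ evaluated near a point $u_0\sim 1$, reducing everything to an estimate on a normalised polynomial with coefficients of size $O(1)$ and a genuine non-vanishing coefficient, where a compactness/continuity argument closes the gap; I expect this normalisation step to absorb most of the technical difficulty.
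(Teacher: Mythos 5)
Your high-level strategy coincides with the paper's: assume $\tilde{L}(\mathcal{T})+1$ roots cluster, expand the symmetric functions about the cluster via the distinguished-root expansion, and derive an inconsistent linear system. However, the step you pass over with ``overdetermines the configuration'' and ``near-Vandermonde expression'' is the entire mathematical content of the proof. After normalising, the paper arrives at a square $(\tilde{L}+1)\times(\tilde{L}+1)$ system $M\vec{v}=\vec{\varepsilon}$, where $\vec{v}$ contains the product of the non-clustered roots together with the normalised non-negligible $S_{\tilde{D}_i}(\mathcal{T})$ (so $|\vec{v}|\gtrsim h^{D(\mathcal{T})-\tilde{L}-1}$ while $|\vec{\varepsilon}|\ll h^{D(\mathcal{T})-\tilde{L}-1}$), and where the entries of $M$ are the binomial coefficients $a_m(\tilde{k}_b-m)=\binom{\tilde{k}_b-1}{m-1}$ produced by Lemma \ref{lem:serDistinguished}. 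The contradiction follows \emph{only} once $M$ is shown to be invertible, and this is not automatic: the paper proves it by recognising the columns of (a rescaling of) $M$ as evaluations of a vector polynomial at the distinct exponents $0,\tilde{k}_1,\ldots,\tilde{k}_{\tilde{L}}$, and reducing, via an invertible triangular change of basis, to the general position of distinct points on the moment curve. Your proposal neither supplies this non-degeneracy argument nor clearly identifies that it is needed; ``forcing a nontrivial polynomial identity \ldots to be $\ll$ its own leading term'' presupposes exactly the invertibility that must be proved.

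Your concrete packaging via the deflated polynomial also rests on a false intermediate claim. Write $P=QR$ with $Q(t)=\prod_{i=1}^{N}(t-w_i)\approx(t-w)^{N}$ and $R(t)=\sum_{i}r_it^{D(\mathcal{T})-N-i}$ the product over the non-clustered roots. It is not true that the sub-leading coefficients of $R$ are near-vanishing: for example, if $1\notin\widetilde{\mathcal{D}}(\mathcal{T})$, then $|S_1(\mathcal{T})|\ll h$ forces
\[
r_1=-\sum_{z\in\mathcal{T}\setminus\{w_1,\ldots,w_N\}}z=Nw+O(\epsilon h)+O(\varepsilon_1),
\]
which has magnitude $\sim Nh$, not $\ll h$; more generally the non-clustered roots all have modulus $\sim h$ and nothing prevents $|r_i|\sim h^i$. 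So the route you describe to contradicting $S_{D(\mathcal{T})}(\mathcal{T})\sim h^{D(\mathcal{T})}$ does not go through as written. The correct statement is that the $r_i$ are \emph{determined}, up to admissible errors, by $w$ and the non-negligible symmetric functions, and the contradiction must be extracted from the resulting structured system --- which returns you to the missing invertibility argument above.
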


We have as a corollary of Theorem \ref{thm:singleTierStruc} and Lemma \ref{lem:symEqWithinTier} the following structure theorem, of which Theorem \ref{thm:tierStruc} is a special case.
\begin{theorem}\label{thm:tierStrucGivenRegime}
We fix a set of exponents $0=k_0< k_1<k_2<\ldots<k_L$ and consider real polynomials whose exponents are drawn from this set. For any real polynomial $\Psi(t)=x+y_1t^{k_1}+y_2t^{k_2}+\ldots+y_Lt^{k_L}$ with $x,y_L\neq 0$, we have the following.

We suppose that the polynomial coefficients $(x,y)$ are such that we are in the tier regime indexed by $(l_1,l_2,\ldots,l_s)$ in Definition \ref{def:tierDef}.
 
The roots of $\Psi$ are stratified into $s$ tiers of roots $\mathcal{T}_1,\mathcal{T}_2,\ldots ,\mathcal{T}_s$, where $1\leq s \leq L$. The tiers are separated in the sense that, if we take $w_{i}\in \mathcal{T}_i$, then 
\[|w_{1}|\gg |w_{2}|\gg\ldots \gg |w_{s}|.\] 
 
 At most $L(\mathcal{T}_r)=l_r\leq L$ roots can cluster about a root: there is some suitable small parameter $\epsilon$ such that, for all $ w\in\mathcal{T}_{r}$, there are at most $L(\mathcal{T}_r)=l_r$ roots of $\Psi$ in $B(w,\epsilon|w|)$.
\end{theorem}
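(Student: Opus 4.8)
The plan is to deduce the theorem by combining the tier stratification already set up in Definition \ref{def:tierDef} with the within-tier analysis provided by Lemma \ref{lem:symEqWithinTier} and Theorem \ref{thm:singleTierStruc}. First, the stratification of $\mathcal{R}$ into tiers $\mathcal{T}_1,\ldots,\mathcal{T}_s$ together with the separation $|w_1|\gg|w_2|\gg\ldots\gg|w_s|$ for $w_i\in\mathcal{T}_i$ is immediate: the hypothesis that $(x,y)$ lies in the tier regime indexed by $(l_1,\ldots,l_s)$ is precisely the grouping of the reference heights appearing in Definition \ref{def:tierDef}, and Lemma \ref{lem:cancComp} shows that $|z|\sim h(\mathcal{T}_i)$ for every $z\in\mathcal{T}_i$, while the regime forces $h(\mathcal{T}_1)\gg h(\mathcal{T}_2)\gg\ldots\gg h(\mathcal{T}_s)$. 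This also records that $|\mathcal{T}_r|=D(\mathcal{T}_r)$ and $L(\mathcal{T}_r)=l_r$.

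Next I would fix a tier $\mathcal{T}=\mathcal{T}_r$ and feed Lemma \ref{lem:symEqWithinTier} into Theorem \ref{thm:singleTierStruc}. Lemma \ref{lem:symEqWithinTier} gives, for $1\leq j\leq D(\mathcal{T})$ with $j\notin\mathcal{D}(\mathcal{T})$, the identity $S_j(\mathcal{T})=\varepsilon_j(\mathcal{T})$ with $|\varepsilon_j(\mathcal{T})|\ll h(\mathcal{T})^j$, which is exactly the hypothesis of Theorem \ref{thm:singleTierStruc} with the choice $\widetilde{\mathcal{D}}(\mathcal{T})=\mathcal{D}(\mathcal{T})$ (the case $j=0$ being trivial since $0\in\mathcal{D}(\mathcal{T})$). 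Applying that theorem yields a small $\epsilon_r>0$ such that every $w\in\mathcal{T}_r$ satisfies $|w|\sim h(\mathcal{T}_r)$ and $B(w,\epsilon_r h(\mathcal{T}_r))$ contains at most $\tilde{L}(\mathcal{T}_r)=|\mathcal{D}(\mathcal{T}_r)|-1=L(\mathcal{T}_r)=l_r$ roots lying in $\mathcal{T}_r$.

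It then remains to pass from ``at most $l_r$ roots of $\mathcal{T}_r$ in $B(w,\epsilon_r h(\mathcal{T}_r))$'' to ``at most $l_r$ roots of $\Psi$ in $B(w,\epsilon|w|)$''. Since $|w|\sim h(\mathcal{T}_r)$, the balls $B(w,\epsilon|w|)$ and $B(w,\epsilon_r h(\mathcal{T}_r))$ are interchangeable after shrinking $\epsilon$. The only new point is that for $\epsilon$ small enough such a ball cannot see a root from any other tier: if $z\in\mathcal{T}_{r'}$ with $r'>r$ then $|z|\ll h(\mathcal{T}_r)$, so $|w-z|\geq|w|-|z|\gtrsim h(\mathcal{T}_r)\sim|w|\gg\epsilon|w|$; and if $r'<r$ then $|z|\gg h(\mathcal{T}_r)\sim|w|$, so $|w-z|\geq|z|-|w|\gtrsim|z|\gg\epsilon|w|$. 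Hence every root of $\Psi$ in $B(w,\epsilon|w|)$ already lies in $\mathcal{T}_r$, and the count is bounded by $l_r$. Finally, taking $\epsilon=\min_{1\leq r\leq s}\epsilon_r$, a minimum over $s\leq L$ tiers, gives a single parameter that works for all tiers simultaneously.

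The argument is essentially bookkeeping: the real content sits in Lemma \ref{lem:symEqWithinTier} and Theorem \ref{thm:singleTierStruc}. If anything is delicate, it is the last step — verifying that the strong separation between tiers built into Definition \ref{def:tierDef} is genuinely strong enough that a ball of radius $\epsilon|w|$ about a root $w$ of one tier is disjoint from every other tier, and that replacing the finitely many $\epsilon_r$ by one $\epsilon$ does not disturb the implicit constants hidden in the relations $\sim$ and $\gg$.
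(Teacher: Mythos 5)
Your proposal is correct and follows exactly the route the paper takes: the paper presents this theorem as an immediate corollary of Lemma \ref{lem:symEqWithinTier} and Theorem \ref{thm:singleTierStruc} (with $\widetilde{\mathcal{D}}(\mathcal{T})=\mathcal{D}(\mathcal{T})$), the tier separation having already been recorded after Definition \ref{def:tierDef} via Lemma \ref{lem:cancComp}. The bookkeeping you supply at the end — that tier separation keeps balls $B(w,\epsilon|w|)$ from seeing other tiers, and that one may take the minimum of finitely many $\epsilon_r$ — is exactly the content the paper leaves implicit.
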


 We have expressed Theorem \ref{thm:singleTierStruc} in a slightly more general form than is required to prove Theorem \ref{thm:tierStruc}. As a black box, Theorem \ref{thm:singleTierStruc} can give improvements to our main structure Theorem \ref{thm:tierStruc}. Indeed, if it is applied with reference to our later Proposition \ref{prop:tierRefined}, we can obtain \ref{thm:tierStrucFine}. It is for this reason we have framed the theorem in terms of critical indices $\widetilde{\mathcal{D}}$, rather than $\mathcal{D}$, since some of the symmetric functions indexed by $\mathcal{D}$ might still be close to vanishing. Nevertheless, all of the tildes appearing in this section can safely be ignored on first reading under the assumption that there is only one tier as all of the essential ideas are contained in this case.

In this section, roots in $\mathcal{T}$ \emph{are not} ordered in terms of size: $w_1,w_2,\ldots,w_{D(\mathcal{T})}$ is some enumeration of the roots in $\mathcal{T}$. Our analysis works by highlighting some of these roots, $\mathpzc{h}\subset\mathcal{T}$, which we will assume are close together. We expand the symmetric functions in terms of these highlighted roots. We will recover some structural statements about the roots from the highlighted expansions. We denote the excluded roots by $\mathpzc{e}=\mathcal{T}\backslash \mathpzc{h}$.

First, we state the following lemma, which is verified at a glance.
\begin{lemma}\label{lem:serHighlight}Let $\mathpzc{h}\subset \mathcal{T}$ with $|\mathpzc{h}|=m$. Then
\[S_{D(\mathcal{T})-m}(\mathcal{T})=\sum_{l=0}^{\min\lbrace m, D(\mathcal{T})-m\rbrace}S_{l}(\mathpzc{h})S_{D(\mathcal{T})-m-l}(\mathpzc{e}).\]
\end{lemma}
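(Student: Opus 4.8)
The final statement to prove is Lemma~\ref{lem:serHighlight}, the identity
\[
S_{D(\mathcal{T})-m}(\mathcal{T}) = \sum_{l=0}^{\min\{m,\,D(\mathcal{T})-m\}} S_l(\mathpzc{h})\, S_{D(\mathcal{T})-m-l}(\mathpzc{e}),
\]
where $\mathpzc{h}\subset\mathcal{T}$ has size $m$ and $\mathpzc{e}=\mathcal{T}\setminus\mathpzc{h}$. The plan is to prove the more general partition identity: for any finite multi-set $\mathcal{A}$ written as a disjoint union $\mathcal{A}=\mathcal{B}\sqcup\mathcal{C}$ and any $j\geq 0$,
\[
S_j(\mathcal{A}) = \sum_{l=0}^{j} S_l(\mathcal{B})\, S_{j-l}(\mathcal{C}),
\]
with the convention $S_i=0$ whenever $i$ exceeds the size of the relevant set (which is exactly what collapses the sum to run only up to $\min\{|\mathcal{B}|, j\}$ and forces $j-l\leq|\mathcal{C}|$). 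Specialising to $\mathcal{A}=\mathcal{T}$, $\mathcal{B}=\mathpzc{h}$, $\mathcal{C}=\mathpzc{e}$, $j=D(\mathcal{T})-m$, and noting $|\mathpzc{h}|=m$, gives the stated range $0\leq l\leq\min\{m,\,D(\mathcal{T})-m\}$.

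First I would recall the definition $S_j(\mathcal{A})=\sum_{\mathcal{S}\subset\mathcal{A},\,|\mathcal{S}|=j}\prod_{z\in\mathcal{S}}(-z)$ and simply classify each $j$-element sub-multi-set $\mathcal{S}\subset\mathcal{A}$ by how it splits across the partition: write $\mathcal{S}=\mathcal{S}_{\mathcal{B}}\sqcup\mathcal{S}_{\mathcal{C}}$ with $\mathcal{S}_{\mathcal{B}}=\mathcal{S}\cap\mathcal{B}\subset\mathcal{B}$ and $\mathcal{S}_{\mathcal{C}}=\mathcal{S}\cap\mathcal{C}\subset\mathcal{C}$. If $|\mathcal{S}_{\mathcal{B}}|=l$ then $|\mathcal{S}_{\mathcal{C}}|=j-l$, and the product factors as $\prod_{z\in\mathcal{S}}(-z)=\bigl(\prod_{z\in\mathcal{S}_{\mathcal{B}}}(-z)\bigr)\bigl(\prod_{z\in\mathcal{S}_{\mathcal{C}}}(-z)\bigr)$. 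Conversely every pair $(\mathcal{S}_{\mathcal{B}},\mathcal{S}_{\mathcal{C}})$ with $\mathcal{S}_{\mathcal{B}}\subset\mathcal{B}$, $\mathcal{S}_{\mathcal{C}}\subset\mathcal{C}$, $|\mathcal{S}_{\mathcal{B}}|+|\mathcal{S}_{\mathcal{C}}|=j$ arises from a unique such $\mathcal{S}$. Summing over $l$ and factoring each inner sum yields $\sum_l S_l(\mathcal{B})S_{j-l}(\mathcal{C})$, which is the claim.

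The only genuine subtlety—and the thing worth a sentence of care rather than being a real obstacle—is the multi-set bookkeeping: because elements are tagged by multiplicity (as in the notation of Section~\ref{sec:RootNotation}), the decomposition $\mathcal{S}=(\mathcal{S}\cap\mathcal{B})\sqcup(\mathcal{S}\cap\mathcal{C})$ is well-defined and the correspondence $\mathcal{S}\leftrightarrow(\mathcal{S}\cap\mathcal{B},\mathcal{S}\cap\mathcal{C})$ is a genuine bijection with no overcounting, since $\mathcal{B}$ and $\mathcal{C}$ are disjoint as tagged multi-sets. Alternatively, one can sidestep the combinatorics entirely by the generating-function argument: multiplying out $\prod_{z\in\mathcal{A}}(X-z)=\prod_{z\in\mathcal{B}}(X-z)\cdot\prod_{z\in\mathcal{C}}(X-z)$ and comparing coefficients of $X^{|\mathcal{A}|-j}$ gives the identity immediately, using that $S_j(\mathcal{A})$ is (up to sign already absorbed in the $(-z)$ convention) the coefficient of $X^{|\mathcal{A}|-j}$ in $\prod_{z\in\mathcal{A}}(X-z)$. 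Either route is short; since the lemma is flagged as "verified at a glance", I would present the coefficient-comparison version in one or two lines and note that the truncation of the summation range is forced by the vanishing of $S_i$ on sets of size less than $i$.
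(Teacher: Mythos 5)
Your proof is correct: the bijection $\mathcal{S}\leftrightarrow(\mathcal{S}\cap\mathpzc{h},\mathcal{S}\cap\mathpzc{e})$ (equivalently, comparing coefficients in $\prod_{z\in\mathcal{T}}(X-z)=\prod_{z\in\mathpzc{h}}(X-z)\prod_{z\in\mathpzc{e}}(X-z)$) is exactly the standard argument, and your handling of the truncation of the summation range is right. The paper offers no proof at all --- it states the lemma is ``verified at a glance'' --- so your write-up simply supplies the routine details the paper omits.
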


Since we will be investigating highlighted roots that are close together, we distinguish one of these roots to further refine the expansion. By a suitable recursive procedure, which is outlined in Section \ref{sec:seriesExp}, we can apply Lemma \ref{lem:serHighlight} to obtain the following. 

\begin{lemma}
\label{lem:serDistinguished}
Set \[a_m(j)=(-1)^{j-1}\binom{m-1+j}{m-1}.\]
Suppose we have $m$ highlighted roots $\mathpzc{h}=\left\lbrace w_1,w_2,\ldots,w_m\right\rbrace\subset\mathcal{T}$. Then, 
\[S_{D(\mathcal{T})-m}(\mathcal{T})=(-w_{m+1})(-w_{m+2})\ldots(-w_{D(\mathcal{T})})\]\[+\sum_{j=1}^{D(\mathcal{T})-m}a_m(j)(-w_1)^jS_{D(\mathcal{T})-m-j}(\mathcal{T})+\varepsilon_{D(\mathcal{T})-m},\]
where \[|\varepsilon_{D(\mathcal{T})-m}|\lesssim \max_{w,w'\in\mathpzc{h}}{|w-w'|}h^{D(\mathcal{T})-m-1},\] if $m\geq 2$ and $\varepsilon_{D(\mathcal{T})-m}=0$ if $m=1$.
\end{lemma}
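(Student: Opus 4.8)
The plan is to pass to generating functions. For a finite (multi-)set of roots $\mathcal A$ write $F_{\mathcal A}(t):=\sum_{n\ge0}S_n(\mathcal A)t^n=\prod_{z\in\mathcal A}(1-zt)$; since $\mathcal T=\mathpzc{h}\sqcup\mathpzc{e}$ we have the exact polynomial identity
\[F_{\mathcal T}(t)=\Bigl(\prod_{i=1}^m(1-w_it)\Bigr)F_{\mathpzc{e}}(t),\]
which is the generating-function form of the splitting in Lemma~\ref{lem:serHighlight}. Set $N:=D(\mathcal T)-m=|\mathpzc{e}|$, and recall that $h$ is, by definition, the largest modulus among the roots in $\mathcal T$, so $|w_i|\le h$ for every highlighted root and $\Delta:=\max_{w,w'\in\mathpzc{h}}|w-w'|\le2h$. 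The role of the distinguished root $w_1$ is to compare $\prod_{i=1}^m(1-w_it)$ with $(1-w_1t)^m$.

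First I would divide the displayed identity by $(1-w_1t)^m$ in $\CC[[t]]$, which is legitimate since the constant term of $(1-w_1t)^m$ is $1$, obtaining $F_{\mathcal T}(t)(1-w_1t)^{-m}=\bigl(\prod_{i=2}^m\frac{1-w_it}{1-w_1t}\bigr)F_{\mathpzc{e}}(t)$. On the left, inserting $(1-w_1t)^{-m}=\sum_{j\ge0}\binom{m-1+j}{m-1}w_1^jt^j$ and reading off the coefficient of $t^N$ gives the finite sum $\sum_{j=0}^N\binom{m-1+j}{m-1}w_1^jS_{N-j}(\mathcal T)$; the elementary identity $\binom{m-1+j}{m-1}w_1^j=-a_m(j)(-w_1)^j$ turns this into $S_N(\mathcal T)-\sum_{j=1}^Na_m(j)(-w_1)^jS_{N-j}(\mathcal T)$, which is exactly the left-hand side of the asserted identity.

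Next I would expand the remaining factor. Writing $\delta_i:=w_i-w_1$ and $\frac{1-w_it}{1-w_1t}=1-\frac{\delta_it}{1-w_1t}$, one has $\prod_{i=2}^m\frac{1-w_it}{1-w_1t}=1+G(t)$ with $G(t)=\sum_{\emptyset\ne A\subseteq\{2,\dots,m\}}\bigl(\prod_{i\in A}(-\delta_i)\bigr)t^{|A|}(1-w_1t)^{-|A|}$. Since the $t^n$-coefficient of $t^k(1-w_1t)^{-k}$ is $\binom{n-1}{k-1}w_1^{n-k}$, since $|\delta_i|\le\Delta$ and $|w_1|\le h$, and since the number of terms is bounded in terms of $m\le L$, one obtains $|[t^n]G(t)|\lesssim\Delta\,h^{n-1}$ for $n\ge1$ and $[t^0]G(t)=0$; in particular $G\equiv0$ when $m=1$. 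Now $F_{\mathpzc{e}}$ is a polynomial of degree $N$ with coefficients $|S_k(\mathpzc{e})|\lesssim h^k$ and leading coefficient $S_N(\mathpzc{e})=(-w_{m+1})\cdots(-w_{D(\mathcal T)})$, so the coefficient of $t^N$ in $(1+G(t))F_{\mathpzc{e}}(t)$ equals $(-w_{m+1})\cdots(-w_{D(\mathcal T)})+\varepsilon_N$ with $\varepsilon_N=\sum_{n=1}^N\bigl([t^n]G(t)\bigr)S_{N-n}(\mathpzc{e})$, whence $|\varepsilon_N|\lesssim\sum_{n=1}^N\Delta\,h^{n-1}h^{N-n}\lesssim\Delta\,h^{N-1}$.

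Equating the two coefficient computations and moving the $a_m(j)$-sum across gives precisely the stated identity with $\varepsilon_{D(\mathcal T)-m}=\varepsilon_N$, hence $|\varepsilon_{D(\mathcal T)-m}|\lesssim\max_{w,w'\in\mathpzc{h}}|w-w'|\,h^{D(\mathcal T)-m-1}$, and $\varepsilon_{D(\mathcal T)-m}=0$ when $m=1$. The argument is essentially bookkeeping; the one point requiring care is to retain exactly a single factor of $\Delta$ (rather than a power of it) in the estimate for $G$, which is what makes the error genuinely one order lower in $h$ than the main term, and to note that although $(1-w_1t)^{-m}$ is an infinite series, only finitely many of its terms contribute to a fixed coefficient, so no convergence issue arises. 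If one prefers to avoid formal power series, the same computation results from iterating the single-root recursion $S_k(\mathcal A)=S_k(\mathcal A\setminus\{w\})+(-w)S_{k-1}(\mathcal A\setminus\{w\})$ at $w=w_1$; this is the recursive procedure referred to just before the statement, which the generating-function version merely packages.
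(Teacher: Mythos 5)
Your argument is correct, and it takes a genuinely different route from the paper. The paper proves this lemma by combining Lemma \ref{lem:highlightToRecurrence} (an iterative substitution scheme that repeatedly eliminates the highest-order $S_j(\mathpzc{e})$ and replaces $S_i(\mathpzc{h})$ by $\binom{m}{i}(-w_1)^i$ plus an error, producing the two-parameter recurrence \eqref{eq:recRel}) with Lemma \ref{lem:recRelSol} (an induction verifying the closed form $a_m(j,m)=(-1)^{j-1}\binom{m-1+j}{m-1}$). Your generating-function computation collapses both steps: dividing $F_{\mathcal T}(t)=\bigl(\prod_{i=1}^m(1-w_it)\bigr)F_{\mathpzc e}(t)$ by $(1-w_1t)^m$ and extracting the coefficient of $t^{D(\mathcal T)-m}$ produces the coefficients $\binom{m-1+j}{m-1}$ directly from the negative binomial series, with no recurrence to set up or solve, and it isolates the entire error in the factor $G(t)$, whose coefficients visibly carry exactly one power of $\Delta=\max_{w,w'\in\mathpzc h}|w-w'|$ and vanish when $m=1$. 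It also disposes of the ``apparent double counting'' flagged in the paper's remark, since your identity is exact until $\varepsilon_N$ is named. Two minor points of hygiene: the bound on $[t^n]G(t)$ uses $\binom{n-1}{k-1}$ with $n$ up to $D(\mathcal T)-m\le k_L$, so the implied constant depends on the fixed exponent data (as the paper permits throughout) and not merely on $m\le L$ as you phrased it; and $h$ is the largest modulus in the tier by Definition \ref{def:hj} together with the ordering \eqref{eq:rootOrderDef}, which is what justifies $|w_i|\le h$ for the (unordered) enumeration used in this section. What the paper's longer route buys is the explicit two-parameter family $a_m(j,l)$ and its solution, but since only $a_m(j,m)$ enters the lemma, your approach is the more economical proof of the statement as given.
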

\begin{remark}
It may appear that there is an error in the statement of Lemma \ref{lem:serDistinguished}, due to the apparent double counting of certain expressions. However, these are accounted for in the error term. The reason we desire such a series expansion is because those expressions $S_{D(\mathcal{T})-m-j}(\mathcal{T})$ are much more explicit than, for example, $S_{D(\mathcal{T})-m-j}(\mathpzc{e})$. Indeed, we can relate $S_{D(\mathcal{T})-m-j}(\mathcal{T})$ to the coefficients of our original polynomial via Lemma \ref{lem:symEqWithinTier} and we have no such tools for $S_{D(\mathcal{T})-m-j}(\mathpzc{e})$ or $S_{j}(\mathpzc{h})$.
\end{remark}

Recall Definition \ref{def:distIndices}, the definition of distinguished indices for a tier, $\mathcal{D}(\mathcal{T})$. It will also be necessary to count backwards from $D(\mathcal{T})$ to $0$ and pick out corresponding critical symmetric functions of roots in the tier. We make the following further definition of distinguished exponents for a given tier $\mathcal{T}=\mathcal{T}_r$.
\begin{definition}\label{def:distIndicesK}
We set $k_0(\mathcal{T})=0$,
\begin{equation} k_1(\mathcal{T})=d_{L(r)},\,k_2(\mathcal{T})=d_{L(r)}+d_{L(r)-1},\,
\ldots\; k_{L(\mathcal{T})}(\mathcal{T})=d_{L(r)}+\ldots+d_{L(r-1)+1}.\end{equation}
Note that  $k_{L(\mathcal{T})}(\mathcal{T})=D(\mathcal{T})$. We denote by $\mathcal{K}(\mathcal{T})$ these exponents. Note that \[\mathcal{K}(\mathcal{T})=D(\mathcal{T})-\mathcal{D}(\mathcal{T})=\left\lbrace D(\mathcal{T}), D(\mathcal{T})-D_1(\mathcal{T}),\ldots, 0 \right\rbrace.\] 

According with Theorem \ref{thm:singleTierStruc}, if we have $\tilde{L}(\mathcal{T})+1$ distinguished indices $0=\tilde{D}_0(\mathcal{T})<\tilde{D}_1(\mathcal{T})<\ldots< \tilde{D}_{\tilde{L}(\mathcal{T})}(\mathcal{T})=D(\mathcal{T})$, given as $\widetilde{\mathcal{D}}(\mathcal{T})\subset \mathcal{D}(\mathcal{T})$, then, corresponding with the above, we set $\widetilde{\mathcal{K}}(\mathcal{T})=D(\mathcal{T})-\widetilde{\mathcal{D}}(\mathcal{T})$. Naturally, we enumerate $\widetilde{\mathcal{K}}(\mathcal{T})$ by $0=\tilde{k}_0<\tilde{k}_1<\ldots<\tilde{k}_{\tilde{L}(\mathcal{T})}$.
\end{definition}

With this notation and our series expansion tools, we are now ready to carry out the analysis of how roots can cluster within $\mathcal{T}$.
\begin{proof}[Proof of Theorem \ref{thm:singleTierStruc}]
In this proof, we use the notation $D=D(\mathcal{T})=|\mathcal{T}|$, $\tilde{L}=\tilde{L}(\mathcal{T})=|\widetilde{\mathcal{D}}(\mathcal{T})|-1$, $\tilde{D}(j)=\tilde{D}_j(\mathcal{T})$, $\tilde{k}_j=\tilde{k}_j(\mathcal{T})$. 

We suppose, by way of contradiction, that, given some small $\epsilon>0$, there exist roots   $w_1,w_2,\ldots,w_{\tilde{L}+1}\in B(w_1,\epsilon h)\cap \mathcal{T}.$ Working with the symmetric functions $S_{D-1}(\mathcal{T}),$ $S_{D-2}(\mathcal{T}),$ $\ldots,$ $S_{D-(\tilde{L}+1)}(\mathcal{T})$ we will derive a system of equations which has no solution.

We can apply Lemma \ref{lem:serDistinguished} to obtain the following. For highlighted roots $\mathpzc{h}\subset \lbrace w_1,w_2,\ldots,w_{\tilde{L}+1}\rbrace \subset\mathcal{T}$ containing $m$ elements, we have
\[S_{D-m}(\mathcal{T})=(-w_{m+1})(-w_{m+2})\ldots(-w_D)+\sum_{j=1}^{D-m}a_m(j)(-w_1)^jS_{D-m-j}(\mathcal{T})+\varepsilon_{D-m}(\mathpzc{h}),\]
where $|\varepsilon_{D-m}(\mathpzc{h})|\ll h^{D-m}$. For $m+1\leq l\leq \tilde{L}+1$, replacing instances of $(-w_l)$ with $(-w_1)+((-w_l)-(-w_1))$ and observing that $|w_l-w_1|\leq \epsilon h$, we find that 
\[S_{D-m}(\mathcal{T})\]\[=(-w_1)^{\tilde{L}+1-m}(-w_{\tilde{L}+2})(-w_{\tilde{L}+3})\ldots(-w_D)+\sum_{j=1}^{D-m}a_m(j)(-w_1)^jS_{D-m-j}(\mathcal{T})+\varepsilon_{D-m}^{(1)},\]
where $|\varepsilon_{D-m}^{(1)}|\lesssim \epsilon h^{D-m}$.

Dividing through by $(-w_1)^{\tilde{L}+1-m}$, we obtain
\[(-w_1)^{m-(\tilde{L}+1)}S_{D-m}(\mathcal{T})\]\[=(-w_{\tilde{L}+2})\ldots(-w_D)+\sum_{j=1}^{D-m}a_m(j)(-w_1)^{j+m-(\tilde{L}+1)}S_{D-m-j}(\mathcal{T})+\varepsilon_{D-m}^{(2)},\]
where $|\varepsilon_{D-m}^{(2)}|\lesssim \epsilon h^{D-(\tilde{L}+1)}$.

Many of the terms appearing in the above sum are near vanishing. To pick out the significant terms, we observe $D-m-j\in \widetilde{\mathcal{D}}(\mathcal{T})$ requires that $j=D(\mathcal{T})-\tilde{D}_i(\mathcal{T})-m$ for $0\leq i\leq \tilde{L}$. The relevant set of indices $j$ is precisely the positive elements of $\widetilde{\mathcal{K}}(\mathcal{T})-m$. 
Thus, we find
\begin{equation}\label{eq:normalisedMExp}
\begin{split}&(-w_1)^{m-(\tilde{L}+1)}S_{D-m}(\mathcal{T})\\
=&(-w_{\tilde{L}+2})\ldots(-w_D)+\sum_{\substack{j\in \widetilde{\mathcal{K}}(\mathcal{T})-m\\j\geq 1}}^{D-m}a_m(j)(-w_1)^{j+m-(\tilde{L}+1)}S_{D-m-j}(\mathcal{T})\\+&\varepsilon_{D-m}^{(3)},\end{split}\end{equation}
where $\varepsilon_{D-m}^{(3)}$ is an error term with $|\varepsilon_{D-m}^{(3)}|\lesssim \epsilon h^{D-(\tilde{L}+1)}$. The normalised symmetric functions appearing in the sum are precisely $(-w_1)^{\tilde{k}_{i}-(\tilde{L}+1)}S_{\tilde{D}(\tilde{L}-i)}(\mathcal{T})$ where $\tilde{D}(\tilde{L}-i)\in \widetilde{\mathcal{D}}$ and $\tilde{D}(\tilde{L}-i)<D-m$. Observe that, from Lemma \ref{lem:serDistinguished}, $a_m(0)=(-1)$ so that, if $D-m\in\widetilde{\mathcal{D}}$, then \eqref{eq:normalisedMExp} can be rearranged to
\begin{equation}\label{eq:normalisedMExpRearranged}
\begin{split}\varepsilon_{D-m}^{(4)}&\\
=&(-w_{\tilde{L}+2})\ldots(-w_D)+\sum_{\substack{j\in \widetilde{\mathcal{K}}(\mathcal{T})-m\\j\geq 0}}^{D-m}a_m(j)(-w_1)^{j+m-(\tilde{L}+1)}S_{D-m-j}(\mathcal{T}),\end{split}\end{equation}
where $\varepsilon_{D-m}^{(4)}$ is an error term with $|\varepsilon_{D-m}^{(4)}|\ll h^{D-(\tilde{L}+1)}$.
In fact, \eqref{eq:normalisedMExpRearranged} is valid for all $m$ since, if $D-m\notin \widetilde{\mathcal{K}}$, then $|(-w_1)^{m-(\tilde{L}+1)}S_{D-m}(\mathcal{T})|\ll h^{D-(\tilde{L}+1)}$.

We set \[\vec{v}=\begin{pmatrix}
     (-w_{\tilde{L}+2})(-w_{\tilde{L}+3})\ldots(-w_D)\\
     (-w_1)^{\tilde{k}_{1}-(\tilde{L}+1)}S_{D(\tilde{L}-1)}\\
      \vdots \\
     (-w_1)^{\tilde{k}_{\tilde{L}}-(\tilde{L}+1)}S_{\tilde{D}(\tilde{L}-\tilde{L})}
\end{pmatrix},\quad \vec{\varepsilon}=\begin{pmatrix}
     \varepsilon_{D-1}^{(4)}\\
      \vdots \\
    \varepsilon_{D-(\tilde{L}+1)}^{(4)}
\end{pmatrix}.\] 
 Bringing the equations \eqref{eq:normalisedMExpRearranged} for $1\leq m \leq \tilde{L}+1$ together, we have a matrix equation
\begin{equation}\label{eq:normalisedMExpVec}\vec{\varepsilon}=M\vec{v},\end{equation}
with $M$ as specified below. Recall, from Lemma \ref{lem:serDistinguished}, that $a_m(j)=(-1)^{j-1}\binom{m-1+j}{m-1}$ so that $a_m(j-m)=(-1)^{j-m-1}\binom{j-1}{m-1}$. 

Let us first give an example of $M$. When $\tilde{L}=2$, and $\tilde{k}_1> 3$ we have \[  M=\left( {\begin{array}{ccc}
   1 & (-1)^{\tilde{k}_1-2} & (-1)^{\tilde{k}_2-2}\\
   1 & (-1)^{\tilde{k}_1-3}(\tilde{k}_1-1) & (-1)^{\tilde{k}_2-3}(\tilde{k}_2-1)\\
    1 & (-1)^{\tilde{k}_1-4}\frac{1}{2}(\tilde{k}_1-2)(\tilde{k}_1-1)  & (-1)^{\tilde{k}_2-4}\frac{1}{2}(\tilde{k}_2-2)(\tilde{k}_2-1) \\ \end{array} }\right).\]
    
Let us give an example of $M$ when it contains some zero entries. This happens, for example, if $\tilde{L}=2$ and $\tilde{k}_2>3$ but $\tilde{k}_1=2$. In this case, 
\[ M=\left( {\begin{array}{ccc}
   1 & (-1)^{\tilde{k}_1-2} & (-1)^{\tilde{k}_2-2}\\
   1 & -1 & (-1)^{\tilde{k}_2-3}(\tilde{k}_2-1)\\
    1 & 0  & (-1)^{\tilde{k}_2-4}\frac{1}{2}(\tilde{k}_2-2)(\tilde{k}_2-1) \\ \end{array} }\right).\]

In general, $M$ is an $(\tilde{L}+1)\times (\tilde{L}+1)$ matrix. According with \eqref{eq:normalisedMExpRearranged}, we must be sensitive to whether $\tilde{\mathcal{K}}-m$  contains negative elements distinct from $-m$ (as these do not appear in the sum). To account for these, it is useful to express \begin{equation}\label{eq:binomAsProduct}a_{m}(\tilde{k}_b-m)=\binom{\tilde{k}_b-1}{m-1}=\frac{1}{(m-1)!}\prod_{i=1}^{m-1}(\tilde{k}_b-i).\end{equation}
For $b\geq 1$, the expression on the right hand side of \eqref{eq:binomAsProduct} is equal to $0$ when $\tilde{k}_b<m$. If $b\geq 1$ and $\tilde{k}_b-(\tilde{L}+1)< 0$, then, for $m>\tilde{k}_b$, $\frac{1}{(m-1)!}\prod_{i=1}^{m-1}(\tilde{k}_b-i)=0$. Observe also that $1=\frac{(-1)^{-m-1}}{(m-1)!}\prod_{i=1}^{m-1}(0-i)$. Thus, for $1\leq m \leq \tilde{L}+1$ and $1\leq b \leq \tilde{L}+1$ the matrix entry $M(m,b)$ is given by
\begin{equation}
 M(m,b)=\frac{(-1)^{\tilde{k}_{b-1}-m-1}}{(m-1)!}\prod_{i=1}^{m-1}(\tilde{k}_{b-1}-i).
\end{equation}

We claim that $M$ is invertible. Let us suppose, for now, that the claim holds and see how the result follows. Using the fact that $M$ is invertible, we find from \eqref{eq:normalisedMExpVec} that
\[|\vec{\varepsilon}|\sim h^{D-(\tilde{L}+1)}.\]
Throughout, we have tracked error terms so that $|\vec{\varepsilon}|\ll h^{D-(\tilde{L}+1)}$. As such, we have a contradiction. Therefore, $B(w_1,\epsilon |w_1|)$ can  contain \emph{at most} $\tilde{L}$ roots.

Let us now prove the claim. First, lets multiply the $j$th column of $M$ by $(-1)^{\tilde{k}_j}$ and call the resulting matrix $\widetilde{M}$.  We work to show the column vectors of $\widetilde{M}$ are linearly independent via a Taylor expansion. Observe that the columns of $\widetilde{M}$ are evaluations of the vector polynomial \[\vec{p}(t)=\begin{pmatrix}
     1\\
     -(t-1)\\
     \frac{1}{2!}(t-1)(t-2)\\
      \vdots \\
     (-1)^{\tilde{L}}\frac{1}{\tilde{L}!}(t-1)(t-2)\ldots(t-\tilde{L})
\end{pmatrix}\]
at the points $t=0,\tilde{k}_1$, $\ldots$ $\tilde{k}_{\tilde{L}}$.
Considering the Maclaurin expansion, we can express $p(t)$ as a matrix transformation of the curve,
\[\vec{q}(t)=\begin{pmatrix}
     1\\
     t\\
     \frac{1}{2!}t^2\\
      \vdots \\
    \frac{1}{\tilde{L}!}t^{\tilde{L}}
\end{pmatrix}.\]
Note that, since the $j$th component of $\vec{p}$ is a degree $j-1$ polynomial, we can write \[\vec{p}(t)=T\vec{q}(t),\]
where $T$ is an upper triangular matrix. It is easy to see that, along the diagonal, the entries are non-zero, so that $T$ is invertible. It is well known that $\tilde{L}$ distinct points on the moment curve
\[\vec{r}(t)=\begin{pmatrix}
     t\\
     \frac{1}{2!}t^2\\
      \vdots \\
    \frac{1}{(\tilde{L})!}t^{\tilde{L}}
\end{pmatrix}\]
are in general position. We thus see that 
\[\dim\Span\left\lbrace \vec{q}(0),\vec{q}(\tilde{k}_1),\ldots,\vec{q}(\tilde{k}_{\tilde{L}})\right\rbrace\]
\[=1+\dim\Span\left\lbrace \vec{r}(\tilde{k}_1),\ldots,\vec{r}(\tilde{k}_{\tilde{L}})\right\rbrace\]
\[=\tilde{L}+1.\]
We also know, since $T$ is invertible, that
\[\dim\Span\left\lbrace T\vec{q}(0),T\vec{q}(\tilde{k}_1),\ldots,T\vec{q}(\tilde{k}_{\tilde{L}})\right\rbrace\]
\[=\dim\Span\left\lbrace \vec{q}(0),\vec{q}(\tilde{k}_1),\ldots,\vec{q}(\tilde{k}_{\tilde{L}})\right\rbrace\]
\[=\tilde{L}+1.\]
Therefore, the column vectors of $\widetilde{M}$ are linearly independent, completing the proof that $\widetilde{M}$, and thus $M$, is invertible.
\end{proof}

\section[A series expansion lemma]{A series expansion lemma; proof of Lemma \ref{lem:serDistinguished}}\label{sec:seriesExp}
In this section, we work to prove our main series expansion Lemma \ref{lem:serDistinguished}. The work is of a rather combinatorial flavour. Lemma \ref{lem:serDistinguished} is a corollary of Lemmas \ref{lem:highlightToRecurrence} and \ref{lem:recRelSol}.

\begin{lemma}\label{lem:highlightToRecurrence}For $m$ highlighted roots, $\mathpzc{h}$, including $w_1$ and $D-m$ excluded roots $\mathpzc{e}=\mathcal{T}\backslash\mathpzc{h}$, the symmetric function $S_{D-m}(\mathcal{T})$ can be expressed as \[S_{D-m}(\mathcal{T})=S_{D-m}(\mathpzc{e})+\sum_{j=1}^{D-m}a_m(j,m)(-w_1)^jS_{D-m-j}(\mathcal{T})+\varepsilon_{D-m},\]
where $|\varepsilon_{D-m}|\lesssim \max_{w,w'\in\mathpzc{h}}|w-w'|h^{D-m}$, if $D\geq m\geq 2$, and $\varepsilon_{D-m}=0$, if $m=1$. The coefficients $a_m(j,m)$ are outlined in the following. We first set $b=\max\lbrace 1, m-(D-m)+1\rbrace$. We have that $a_m(\cdot,\cdot)$ is the solution of the below recurrence relation:
\begin{equation}\label{eq:recRel}
\begin{split}
a_m(1,m)=\binom{m}{m-1},a_m(1,m-1)=\binom{m}{m-2},\ldots, a_m(1,b)=\binom{m}{b-1},\\
a_m(1,l)=0\text{, for }l\leq b-1\text{ or }l\geq m+1,\\
a_m(j+1,l)=a_m(j,l-1)\text{, for }l\leq b-1\text{ or }l\geq m+1,\\
a_m(j+1,l)=-\binom{m}{l-1}a_{m}(j,m)+a_m(j,l-1)\text{, for }b\leq l\leq m.
\end{split}
\end{equation}
\end{lemma}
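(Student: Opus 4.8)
The plan is to prove Lemma~\ref{lem:highlightToRecurrence} by induction on the number of highlighted roots $m$, peeling off one highlighted root at a time and feeding the result of the previous step back into itself. The base case $m=1$ is the trivial identity: if $\mathpzc{h}=\{w_1\}$, then every $j$-element subset of $\mathcal{T}$ either avoids $w_1$ or contains it, so $S_{D-1}(\mathcal{T})=S_{D-1}(\mathpzc{e})+(-w_1)S_{D-2}(\mathpzc{e})$; but $S_{D-2}(\mathpzc{e})=S_{D-2}(\mathcal{T})$ up to the term containing $w_1$, which contributes an error of size $0$ once we absorb it correctly --- here one has to be a little careful, but with $m=1$ there is no pair $w,w'\in\mathpzc{h}$ so the claimed $\varepsilon_{D-1}=0$ is forced and the identity is exact in terms of $S_{D-1-j}(\mathcal{T})$ after re-expanding. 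This matches $a_1(1,1)=\binom{1}{0}=1$.

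For the inductive step, suppose the expansion holds for a highlighted set of size $m$ with distinguished root $w_1$, and now add one more highlighted root $w_{m+1}$, so $\mathpzc{h}'=\mathpzc{h}\cup\{w_{m+1}\}$ and $\mathpzc{e}'=\mathpzc{e}\setminus\{w_{m+1}\}$. First I would relate $S_{D-m}(\mathpzc{e})$ to symmetric functions of $\mathpzc{e}'$ by splitting on whether a subset contains $w_{m+1}$, then use the closeness hypothesis $|w_{m+1}-w_1|\le\max_{w,w'\in\mathpzc{h}'}|w-w'|$ to replace $(-w_{m+1})$ by $(-w_1)$ at the cost of an admissible error term of the stated size (each such substitution costs $\lesssim\max|w-w'|\,h^{D-m-1}$ times a binomial factor). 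This produces terms of the form $(-w_1)^i S_{D-m-i}(\mathpzc{e}')$, which by the induction hypothesis (applied with the smaller highlighted set, or rather by iterating Lemma~\ref{lem:serHighlight}) can be rewritten in terms of $S_{\bullet}(\mathcal{T})$ and powers of $(-w_1)$. Collecting the coefficient of $(-w_1)^{j}S_{D-m-1-j}(\mathcal{T})$ on both sides yields exactly the recurrence~\eqref{eq:recRel}; the two regimes in the recurrence (the plain shift $a_m(j+1,l)=a_m(j,l-1)$ versus the corrected shift with the $-\binom{m}{l-1}a_m(j,m)$ term) correspond precisely to whether the index $l$ falls in the range $b\le l\le m$ where the contribution of the ``top'' symmetric function $S_{0}(\mathpzc{h})$-type term must be subtracted off to avoid double counting, versus the range where it does not interfere.

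The cleanest route is probably to avoid a direct double induction and instead combine Lemma~\ref{lem:serHighlight} with a single auxiliary induction that tracks how the recursive substitution $S_{D-m-i}(\mathpzc{e})\rightsquigarrow S_{D-m-i}(\mathcal{T})$ propagates coefficients; the bookkeeping of which powers of $w_1$ appear, and with what sign $(-1)^{j-1}$, is where the recurrence~\eqref{eq:recRel} is genuinely \emph{defined} rather than derived, so Lemma~\ref{lem:highlightToRecurrence} is really the statement that this recursion is \emph{consistent} and produces an error term of the claimed magnitude. The error bound itself is the routine part: every term I discard is a product of $D-m-1$ roots of size $\sim h$ times a single difference $|w-w'|$ with $w,w'\in\mathpzc{h}$, times a combinatorial constant depending only on $k_1,\dots,k_L$, so it is $\lesssim\max_{w,w'\in\mathpzc{h}}|w-w'|\,h^{D-m}$ after the crude bound $h^{D-m-1}\le h^{D-m}$ (assuming $h\gtrsim 1$ in the relevant normalisation, or else keeping the sharper power --- the statement as written uses $h^{D-m}$).

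The main obstacle I anticipate is organising the combinatorics so that the ``plus the term that is accounted for in the error'' phenomenon flagged in the remark after Lemma~\ref{lem:serDistinguished} is handled transparently: naively re-expanding $S_{D-m-i}(\mathpzc{e})$ in terms of $S_{D-m-i}(\mathcal{T})$ reintroduces subsets that straddle $\mathpzc{h}$ and $\mathpzc{e}$, and one must verify that all of these either cancel against the subtracted $\binom{m}{l-1}a_m(j,m)$ terms or are genuinely of size $\lesssim\max|w-w'|\,h^{D-m}$. I expect this to hinge on the observation that any straddling subset uses at least two highlighted roots and hence contains a factor $(-w_a)(-w_b)$ with $a\ne b$ in $\mathpzc{h}$, which can be rewritten as $(-w_1)^2$ plus lower-order-in-$(w_1)$ corrections each carrying a difference $|w_a-w_1|$ or $|w_b-w_1|$; iterating this identification is exactly what the recurrence encodes, and making that iteration rigorous (rather than merely plausible) is the crux of the proof.
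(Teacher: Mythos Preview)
Your induction-on-$m$ strategy misreads the recurrence. The relation \eqref{eq:recRel} is a recurrence in $j$ for \emph{fixed} $m$: it expresses $a_m(j+1,\cdot)$ in terms of $a_m(j,\cdot)$, not $a_{m+1}$ in terms of $a_m$. Passing from $m$ to $m+1$ highlighted roots changes both the symmetric function being expanded (from $S_{D-m}$ to $S_{D-m-1}$) and the subscript on $a$, so collecting coefficients after that step cannot produce \eqref{eq:recRel}. Your second suggestion --- tracking how the substitution $S_{D-m-i}(\mathpzc{e})\rightsquigarrow S_{D-m-i}(\mathcal{T})$ propagates coefficients --- is the right one, and it is exactly what the paper does, but you have not carried it out.

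Concretely, the paper fixes $m$ once and for all. It starts from Lemma~\ref{lem:serHighlight} and immediately replaces every $S_l(\mathpzc{h})$ by $\binom{m}{l}(-w_1)^l+\varepsilon_l(\mathpzc{h})$ with $|\varepsilon_l(\mathpzc{h})|\lesssim\max_{w,w'\in\mathpzc{h}}|w-w'|\,h^{l-1}$; this is the only place closeness of the highlighted roots is used, and it is used all at once rather than one root at a time. What remains is an expression of the form $\sum_{l} c_l (-w_1)^l S_{D-m-l}(\mathpzc{e})$ plus an admissible error. One then iterates in $j$: at step $j$, the highest surviving $S_{D-m-j}(\mathpzc{e})$ is replaced via
\[
S_{D-m-j}(\mathpzc{e})=S_{D-m-j}(\mathcal{T})-\sum_{i\ge 1}\binom{m}{i}(-w_1)^i S_{D-m-j-i}(\mathpzc{e})+\text{error},
\]
and the coefficients of the remaining $S_{\bullet}(\mathpzc{e})$ terms update according to precisely \eqref{eq:recRel}. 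The recurrence is thus \emph{derived} as the transition rule of this iteration, not merely checked for consistency; the shift rule $a_m(j+1,l)=a_m(j,l-1)$ for $l\ge m+1$ records the $S_{\bullet}(\mathcal{T})$ terms already extracted, so that after $D-m$ steps the final series has coefficients $a_m(j,m)$ as claimed. There is no double counting to undo: each $S_{D-m-j}(\mathpzc{e})$ is eliminated exactly once and never reappears.

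Your observation about the error exponent is correct: the iteration naturally produces $\max|w-w'|\,h^{D-m-1}$ (as in Lemma~\ref{lem:serDistinguished}), and the $h^{D-m}$ in the statement of Lemma~\ref{lem:highlightToRecurrence} is looser than what the proof actually gives.
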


In the case that $m\leq D-m$, i.e. $b=1$, we include a figure representing a step of the recursion for those $1\leq l \leq m$. Figure \ref{fig:recRelStep} represents the dynamics taking us one step from $\left\lbrace a_m(j,m),\ldots, a_m(j,1)\right\rbrace$ to $\left\lbrace a_m(j+1,m),\ldots, a_m(j+1,1)\right\rbrace$, where each arrow represents addition. We suppress the first argument of $a_m$.
\begin{figure}[h]
\includegraphics[width=0.6\textwidth]{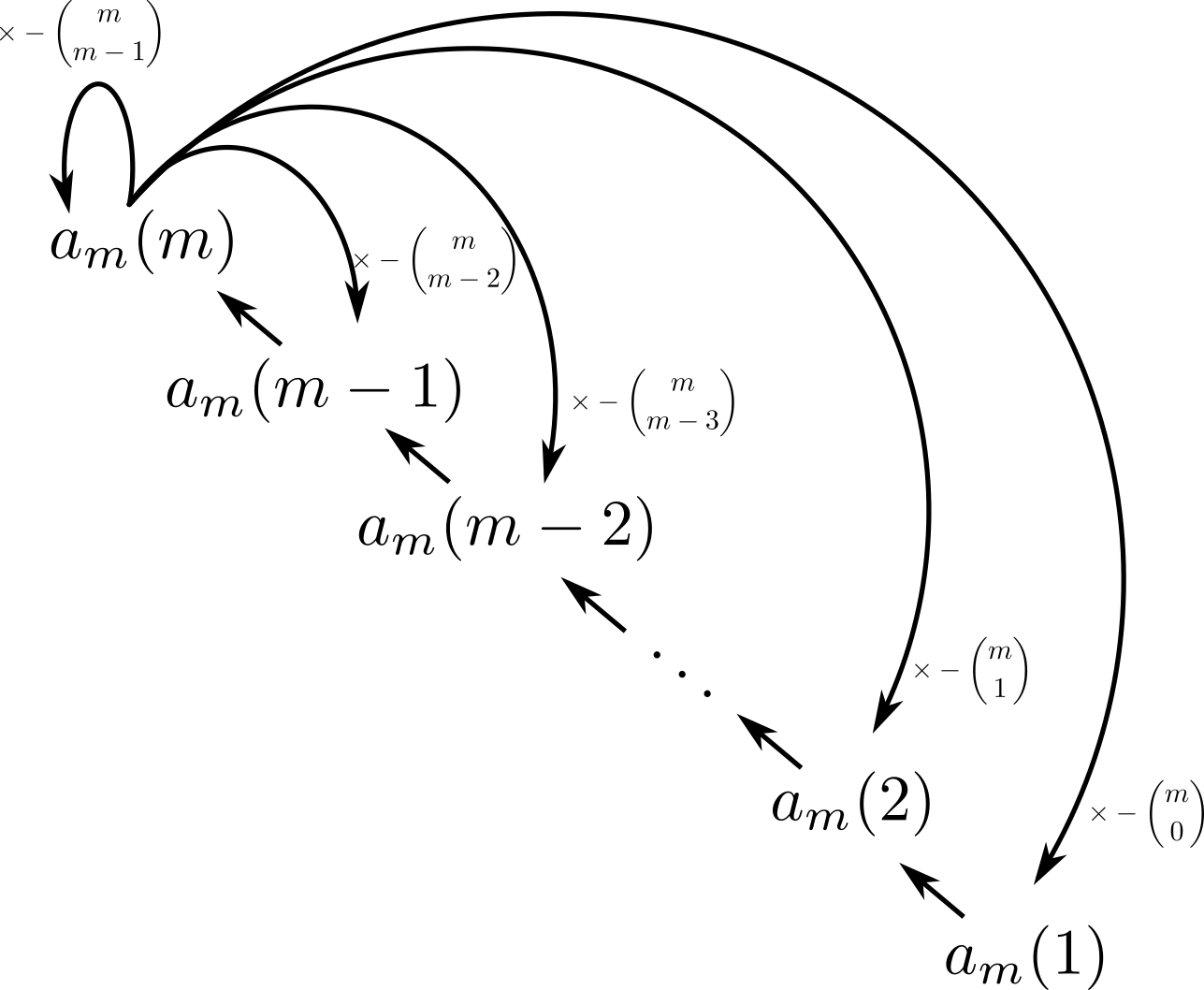}
\centering
\caption{A step of the recursion \eqref{eq:recRel}.}\label{fig:recRelStep}
\end{figure}

\begin{lemma} \label{lem:recRelSol}Let $a_m(\cdot,\cdot)$ satisfy the recurrence relation \eqref{eq:recRel}. For $b\leq l \leq m$ and $1\leq j\leq D-m$, \begin{equation}
a_m(j,l)=(-1)^{j-1}\binom{m-1+j}{l-1}\binom{m-1+j-l}{j-1}.
\end{equation}
In particular, \[a_m(j,m)=(-1)^{j-1}\binom{m-1+j}{m-1},\]
for $1\leq j \leq D-m$.
\end{lemma}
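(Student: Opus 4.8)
\textbf{Proof proposal for Lemma \ref{lem:recRelSol}.}

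The plan is to verify the closed form by induction on $j$, the first argument of $a_m$, using the recurrence \eqref{eq:recRel} directly. The base case $j=1$ asks us to check that $(-1)^0\binom{m}{l-1}\binom{m-l}{0}=\binom{m}{l-1}$ for $b\leq l\leq m$, which matches the prescribed initial data $a_m(1,l)=\binom{m}{l-1}$ exactly since $\binom{m-l}{0}=1$ in that range. So the base case is immediate.

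For the inductive step, I would assume the formula holds at level $j$ and use the last line of \eqref{eq:recRel}, namely $a_m(j+1,l)=-\binom{m}{l-1}a_m(j,m)+a_m(j,l-1)$ for $b\leq l\leq m$, together with the boundary rule $a_m(j+1,l)=a_m(j,l-1)$ when $l\leq b-1$ (which feeds in the correct ``shifted'' term at $l=b$). Substituting the closed form, I need to show
\[
(-1)^{j}\binom{m+j}{l-1}\binom{m+j-l}{j}
= -\binom{m}{l-1}(-1)^{j-1}\binom{m-1+j}{m-1} + (-1)^{j-1}\binom{m-1+j}{l-2}\binom{m+j-l}{j-1}.
\]
Pulling out $(-1)^{j-1}$ and rearranging, this reduces to a pure binomial identity:
\[
\binom{m+j}{l-1}\binom{m+j-l}{j}
= \binom{m}{l-1}\binom{m-1+j}{m-1} - \binom{m-1+j}{l-2}\binom{m+j-l}{j-1}.
\]
I expect this to follow from standard manipulations --- rewriting each product of binomials as a single multinomial-type ratio of factorials, or alternatively via a Vandermonde/Chu--Vandermonde convolution applied to the generating function identity $(1+x)^{m}\cdot(1+x)^{j-1} = (1+x)^{m+j-1}$ with appropriate coefficient extraction. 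The cleanest route is probably to clear denominators: both sides, multiplied by $(l-1)!\,j!\,(m-l+... )$-type factorials, become a polynomial identity in $m,j,l$ that can be checked by expanding, or one recognises the right-hand side as the result of applying Pascal's rule $\binom{m-1+j}{m-1}=\binom{m+j}{m-1}-\binom{m-1+j}{m-2}$ and regrouping.

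The main obstacle is therefore purely bookkeeping: handling the two regimes of the recurrence ($b=1$ versus $b=m-(D-m)+1$) uniformly, and making sure the boundary term at $l=b$ is supplied correctly by the rule for $l\leq b-1$ so that the induction closes without a special case. Once the closed form $a_m(j,l)$ is established for $b\le l\le m$, the stated consequence $a_m(j,m)=(-1)^{j-1}\binom{m-1+j}{m-1}$ is just the specialisation $l=m$, since $\binom{m-1+j-m}{j-1}=\binom{j-1}{j-1}=1$. Finally, to connect with Lemma \ref{lem:serDistinguished} one observes that the terms in the recurrence with $l\notin\{b,\ldots,m\}$ contribute only to the error term $\varepsilon_{D-m}$ (they come with factors $(-w_1)^j$ paired against differences $w-w'$ over highlighted roots), so only the ``diagonal'' coefficients $a_m(j,m)=:a_m(j)$ survive in the main term, matching the statement of Lemma \ref{lem:serDistinguished}.
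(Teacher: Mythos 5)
Your proposal follows the paper's proof essentially verbatim: induction on $j$, with the base case $j=1$ read off from the initial data, and an inductive step that reduces to precisely the binomial identity you display, which the paper then verifies by writing both sides as ratios of factorials and simplifying. The only content you defer --- that factorial verification --- is routine and is carried out explicitly in the paper, so the two arguments coincide (and the boundary subtlety at $l=b$ you flag is likewise left implicit in the paper; it does not affect the $l=m$ case that Lemma \ref{lem:serDistinguished} actually uses).
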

Now we turn to the derivation of our desired series expansion in terms of the above specified recurrence relation.
\begin{proof}[Proof of Lemma \ref{lem:highlightToRecurrence}]
Take $m$ highlighted roots $\mathpzc{h}=\lbrace w_1,w_2,\ldots,w_m\rbrace$. We work from Lemma \ref{lem:serHighlight}, which gives
\[S_{D-m}(\mathcal{T})-S_{D-m}(\mathpzc{e})=\sum_{l=1}^{\min\lbrace D-m, m\rbrace }S_{l}(\mathpzc{h})S_{D-m-l}(\mathpzc{e}).\]

We perform an iterative procedure to obtain the recurrence relation \eqref{eq:recRel}. It is obtained by expressing each of the highest order $S_j(\mathpzc{e})$ in terms of lower order $S_l$ and each $S_{j-l}(\mathpzc{h})$ in terms of a distinguished root $w_1$.  One step of this procedure corresponds to first replacing the instance of $S_j(\mathpzc{e})$ with the largest index $j$ with $S_j(\mathcal{T})-\sum S_i(\mathpzc{h})S_{j-i}(\mathpzc{e})$ and then replacing instances of $S_i(\mathpzc{h})$ with $\binom{m}{i}(-w_1)^{j-l}+\varepsilon_{i}(\mathpzc{h})$, where $|\varepsilon_{i}(\mathpzc{h})|\lesssim \max_{w,w'\in\mathpzc{h}}|w-w'|h^{i-1}.$ 

Let us first consider the simpler case of $m=1$. We see that
\[S_{D-1}(\mathcal{T})-S_{D-1}(\mathpzc{e})=(-w_1)S_{D-2}(\mathpzc{e})=a'_1(1,1)(-w_1)S_{D-2}(\mathpzc{e}).\]
This gives our initialisation of the recurrence relation, with $a'_1(1,1)=1$ with $a'_1(1,l)=0$ for $l\neq 1$. To begin the recurrence, we substitute $S_{D-2}(\mathpzc{e})=S_{D-2}(\mathcal{T})-(-w_1)S_{D-3}(\mathpzc{e})$, which gives \[S_{D-1}(\mathcal{T})-S_{D-1}(\mathpzc{e})=(-w_1)S_{D-2}(\mathcal{T})-(-w_1)^2S_{D-3}(\mathpzc{e})\]
\[=a'_1(2,2)(-w_1)S_{D-2}(\mathcal{T})+a'_1(2,1)(-w_1)^2S_{D-3}(\mathpzc{e}),\]
so that $a'_1(2,2)=a'_1(1,1)=1$ and $a'_1(2,1)=-1=-a'_1(1,1)+a'_1(1,0)$. This continues until we have eliminated all appearances of the $S_l(\mathpzc{h})$ and $S_l(\mathpzc{e})$ for $l>1$, the recurrence relation is seen to be $a'_1(j+1,1)=-a'_1(j,1)=-a'_1(j,1)+a'_1(j,0)$ for $j\leq D-1$ and $a'(j+1,l)=a'(j,l-1)$ for $l\neq 1$.

In the case where $m\geq 2$, it is harder to work with exact expressions, so we will introduce appropriate error terms. We distinguish the root $w_1\in\mathpzc{h}$. Note that, wherever $S_{l}(\mathpzc{h})$ appears, we can write $S_{l}(\mathpzc{h})=\binom{m}{l}(-w_1)^{l}+\varepsilon_{l}(\mathpzc{h})$, where $|\varepsilon_{l}(\mathpzc{h})|\lesssim \max_{w,w'\in\mathpzc{h}}|w-w'|h^{l-1}$, if $m\geq 2$. This is seen by replacing any instance of $w_j$ for $2\leq j \leq m$ with $w_j=w_1+(w_j-w_1)$: the term resulting from the difference on the right hand side of this equation is cast into to the error term $\varepsilon_{l}(\mathpzc{h})$.

For our initialisation, we first modify all terms featuring highlighted roots by introducing an appropriate error and find that
\[S_{D-m}(\mathcal{T})-S_{D-m}(\mathpzc{e})=\sum_{l=1}^{\min\lbrace m, D-m\rbrace}S_{l}(\mathpzc{h})S_{D-m-l}(\mathpzc{e})\]
\[=\sum_{l=1}^{\min\lbrace m, D-m\rbrace}\binom{m}{l}(-w_1)^lS_{D-m-l}(\mathpzc{e})+\left(\sum_{l=1}^{\min\lbrace m, D-m\rbrace}\varepsilon_{l}(\mathpzc{h})S_{D-m-l}(\mathpzc{e})\right)\]
\begin{equation}\label{eq:initRecursmgeq2}=\sum_{l\geq1}a'_{m}(1,m+1-l)(-w_1)^lS_{D-m-l}(\mathpzc{e})+\varepsilon_{D-m}^{(0)},\end{equation}
 where $a'_{m}(1,l)=\binom{m}{l}$ for $1\leq l\leq \min\lbrace m, D-m\rbrace$ and $a'_{m}(1,l)=0$ for $l>\min\lbrace m, D-m\rbrace$ or $l\leq 0$. Here $\varepsilon_{D-m}^{(0)}\coloneqq \sum_{l=1}^{\min\lbrace m, D-m\rbrace}\varepsilon_{l}(\mathpzc{h})S_{D-m-l}(\mathpzc{e})$ is easily verified to satisfy the required error bound $|\varepsilon_{D-m}^{(0)}|\lesssim \max_{w,w'\in\mathpzc{h}}|w-w'|h^{D-m-1}$. Note that, although $S_j(\mathpzc{e})$ appearing in the sum \eqref{eq:initRecursmgeq2} is not defined for $j>D-m$ or $j <0$, the corresponding terms should not be considered as part of the sum since the corresponding coefficients are $0$. We write the initialisation as an infinite series expansion in this way because it makes the expression of our recursion more convenient.

As the first step of the recursion, we will use the equation \[S_{D-m-1}(\mathpzc{e})=S_{D-m-1}(\mathcal{T})-\sum_{j=1}^{\min\lbrace m, D-m-1\rbrace}S_{j}(\mathpzc{h})S_{D-m-1-j}(\mathpzc{e})\]
 \begin{equation}\label{eq:firstSubRecurs}=S_{D-m-1}(\mathcal{T})-\sum_{j\in \ZZ}\mathbbm{1}_{I_1}(j)\binom{m}{j}(-w_1)^jS_{D-m-1-j}(\mathpzc{e})+\varepsilon_{D-m-1}^{(0,1)}(\mathpzc{h}),\end{equation}
 where $I_1=\left[1,\min\lbrace m, D-m-1\rbrace\right]$ and $\varepsilon^{(0,1)}_{D-m-1}\coloneqq \sum_{j\in \ZZ}\mathbbm{1}_{I_1}(j)\varepsilon_{j}(\mathpzc{h})S_{D-m-1-j}(\mathpzc{e})$ satisfies $|\varepsilon^{(0,1)}_{D-m-1}(\mathpzc{h})|\lesssim\max_{\lbrace w,w'\in \mathpzc{h}\rbrace}|w-w'|h^{D-m-2}$.
 
 To carry out the first step of our recursive procedure, we substitute \eqref{eq:firstSubRecurs} into \eqref{eq:initRecursmgeq2}. We end up with
 \[S_{D-m}(\mathcal{T})-S_{D-m}(\mathpzc{e})\]
\[=a'_{m}(1,m)(-w_1)S_{D-m-1}(\mathcal{T})+\sum_{j\in \ZZ}\mathbbm{1}_{I_1}(j)\binom{m}{j}(-a'_{m}(1,m))(-w_1)^{j+1}S_{D-m-1-j}(\mathpzc{e})\]
\[+a'_{m}(1,m)(-w_1)\varepsilon^{(0,1)}_{D-m-1}(\mathpzc{h})\]\[+\sum_{l\geq 2}a'_{m}(1,m+1-l)(-w_1)^lS_{D-m-l}(\mathpzc{e})+\varepsilon^{(0)}_{D-m}(\mathpzc{h})\]
\[=a'_{m}(1,m)(-w_1)S_{D-m-1}(\mathcal{T})+\sum_{j\in \ZZ}\mathbbm{1}_{I_1}(j)\binom{m}{j}(-a'_{m}(1,m))(-w_1)^{j+1}S_{D-m-1-j}(\mathpzc{e})\]\[+\sum_{j\geq 1}a'_{m}(1,m-j)(-w_1)^{j+1}S_{D-m-1-j}(\mathpzc{e})+\varepsilon_{D-m}^{(1)}(\mathpzc{h})\]
\[=\sum_{l\leq 0}a'_{m}(2,m+1-l)(-w_1)^{l+1}S_{D-m-1-l}(\mathcal{T})\]
\[+\sum_{l\geq 1}a'_{m}(2,m+1-l)(-w_1)^{l+1}S_{D-m-1-l}(\mathpzc{e})+\varepsilon_{D-m}^{(1)}(\mathpzc{h})\]
where $|\varepsilon_{D-m}^{(1)}(\mathpzc{h})|\ll h^{D-m}$ and
\begin{equation} a'_{m}(2,m+1-l)=-\mathbbm{1}_{I_1}(l)\binom{m}{l}a'_m(1,m)+a'_{m}(1,m-l).\end{equation}
The procedure continues. The recurrence relation we get is
\begin{equation}\label{eq:derivedRecRel} a'_{m}(j+1,m+1-l)=-\mathbbm{1}_{I_j}(l)\binom{m}{l}a'_m(j,m)+a'_{m}(j,m-l),\end{equation}
where
\begin{equation}\label{eq:intervalIjDef} I_j\coloneqq [1,\min \left\lbrace m,D-m-j\right\rbrace],\end{equation}
where, if $\min \left\lbrace m,D-m-j\right\rbrace<1$, \eqref{eq:intervalIjDef} should be understood as a void interval, i.e. $I_j=\emptyset$. The indicator function $\mathbbm{1}_{I_j}$ appearing in \eqref{eq:derivedRecRel} simply ensures that we do not pick up any symmetric functions below $S_0(\mathcal{T})$ in our series expansion. 

Looking at \eqref{eq:derivedRecRel} and \eqref{eq:intervalIjDef}, we can see that, for $j\geq D-m$, $a'_m(j+1,l)=a'_m(j,l)$ for all $l$. This reflects the fact that we can only carry out the iteration procedure, where we replace instances of $S_{D-m-j}(\mathpzc{e})$ with $S_{D-m-j}(\mathcal{T})$, an error, and terms involving $S_{D-m-j'}(\mathpzc{e})$ for $j'>j$, $D-m-1$ times. The derived series expansion for $S_{D-m}(\mathcal{T})-S_{D-m}(\mathpzc{e})$ is 
\[S_{D-m}(\mathcal{T})-S_{D-m}(\mathpzc{e})\]
\[=\sum_{l\leq 0}a'_{m}(2,m+1-l)(-w_1)^{l+1}S_{D-m-1-l}(\mathcal{T})+\sum_{l\geq 1}a'_{m}(2,m+1-l)(-w_1)^{l+1}S_{D-m-1-l}(\mathpzc{e})\]
\[+\varepsilon_{D-m}^{(1)}(\mathpzc{h})\]
\[=\sum_{l\leq 0}a'_{m}(3,m+1-l)(-w_1)^{l+2}S_{D-m-2-l}(\mathcal{T})+\sum_{l\geq 1}a'_{m}(3,m+1-l)(-w_1)^{l+2}S_{D-m-2-l}(\mathpzc{e})\]
\[+\varepsilon_{D-m}^{(2)}(\mathpzc{h})\]
\[=\ldots\]
\[=\sum_{l\leq 0}a'_{m}(j'+1,m+1-l)(-w_1)^{l+j'}S_{D-m-j'-l}(\mathcal{T})+\sum_{l\geq 1}a'_{m}(j'+1,m+1-l)(-w_1)^{l+j'}S_{D-m-j'-l}(\mathpzc{e})\]
\[+\varepsilon_{D-m}^{(j')}(\mathpzc{h}).\]
Taking $j'= D-m-1$, we find that
\[S_{D-m}(\mathcal{T})-S_{D-m}(\mathpzc{e})=\sum_{l\leq 0}a'_{m}(D-m,m+1-l)(-w_1)^{l+D-m-1}S_{1-l}(\mathcal{T})\]\[+\sum_{l\geq 1}a'_{m}(D-m,m+1-l)(-w_1)^{l+D-m-1}S_{1-l}(\mathpzc{e})+\varepsilon_{D-m}^{(D-m-1)}(\mathpzc{h})\]
\[=\sum_{l\leq 0}a'_{m}(D-m,m+1-l)(-w_1)^{l+D-m-1}S_{1-l}(\mathcal{T})+a'_{m}(D-m,m)(-w_1)^{D-m}S_{0}(\mathpzc{e})+\varepsilon_{D-m}^{(D-m-1)}(\mathpzc{h})\]
\[=\sum_{m-D+2\leq l\leq 1}a'_{m}(D-m+l-1,m)(-w_1)^{l+D-m-1}S_{1-l}(\mathcal{T})+\varepsilon_{D-m}^{(D-m-1)}(\mathpzc{h}).\]
Now observe that, for $l\leq 0$, 
$a'_{m}(j+1,m+1-l)=a'_{m}(j,m-l)$, by definition \eqref{eq:derivedRecRel}. Therefore, for $l\leq 0$, 
$a'_{m}(j',m+1-l)=a'_{m}(j'+l-1,m).$ 
In particular, we have 
\[S_{D-m}(\mathcal{T})-S_{D-m}(\mathpzc{e})\]
\[=\sum_{1\leq j\leq D-m}a'_{m}(j,m)(-w_1)^{j}S_{D-m-j}(\mathcal{T})+\varepsilon_{D-m}^{(D-m-1)}(\mathpzc{h}).\]

 One can easily see that the recurrence relation \eqref{eq:derivedRecRel} differs from the recurrence relation \eqref{eq:recRel}. However, the terms that appear in the series expansion are $a'_m(j,m)$ for $1\leq j \leq D-m$ and it can be verified that these coincide with the same $a_m(j,m)$. \end{proof}

\begin{proof}[Proof of Lemma \ref{lem:recRelSol}]
We want to show that, for $b\leq l \leq m$ and $1\leq j \leq D-m$,
\begin{equation}\label{eq:amjlExplicit}a_m(j,l)=(-1)^{j-1}\binom{m-1+j}{l-1}\binom{m-1+j-l}{j-1},\end{equation}
where $a_m$ satisfies the recurrence relation \eqref{eq:recRel}.
This is true for $j=1$ by definition. We now work by induction. We must show that the right hand side of \eqref{eq:amjlExplicit} satisfies the recurrence relation \eqref{eq:recRel}. We apply the relevant forward expression to see that, for $b\leq l \leq m$ and $j\geq 1$,
\[-\binom{m}{l-1}(-1)^{j-1}\binom{m-1+j}{m-1}+(-1)^{j-1}\binom{m-1+j}{l-2}\binom{m+j-l}{j-1}\]
\[=(-1)^{j}\left(\frac{m!}{(l-1)!(m-l+1)!}\cdot\frac{(m-1+j)!}{j!(m-1)!}-\frac{(m+j-1)!}{(m+j-l+1)!(l-2)!}\cdot\frac{(m+j-l)!}{(j-1)!(m-l+1)!}\right)\]
\[=(-1)^{j}\left(\frac{(m-1+j)!}{(l-1)!(m+j-l+1)!}\right)\left(\frac{m(m+j-l+1)!}{(m-l+1)!j!}-\frac{(m+j-l)!(l-1)}{(j-1)!(m-l+1)!}\right)\]
\[=(-1)^{j}\left(\frac{(m-1+j)!}{(l-1)!(m+j-l+1)!}\right)\left(\frac{(m+j-l)!}{j!(m-l+1)!}\right)\left(m(m+j-l+1)-(l-1)j\right)\]
\[=(-1)^{j}\left(\frac{(m-1+j)!}{(l-1)!(m+j-l+1)!}\right)\left(\frac{(m+j-l)!}{j!(m-l+1)!}\right)\left((m+j)(m+1-l)\right)\]
\[=(-1)^{j}\binom{m+j}{l-1}\binom{m+j-l}{j},\]
as required.
\end{proof}

\part{Explicit root structure}\label{part:ExpRootStruc}
In Section \ref{sec:strata}, we introduced certain reference heights. These reference heights provided an essential scaffold for our root structure analysis. Nevertheless, the tools developed so far tell us nothing explicit about even the size of the reference heights. The reference heights were given in terms of the size of certain roots and thus depended implicitly on the coefficients of our polynomial $\Psi$. In this section, we establish an explicit toolkit for estimating reference heights, from which we are then able to obtain refined structural statements.

We also consider the  rough factorisation of monic $\Psi$ into polynomials corresponding with each tier. We provide an explicit factorised polynomial $\widetilde{\Psi}(t)=\prod_{r=1}^s\widetilde{\Psi}_r(t)$. Our analysis results in a further tool for root finding, in that a suitable small neighbourhood of the roots of the polynomial factor $\widetilde{\Psi}_l$ will contain the roots of $\Psi$ in the tier $\mathcal{T}_l$.
\section{Root tier stratification}\label{sec:heightEst}
We have defined the tier structure in terms of reference heights about which we have no a priori information. The following lemma outlines a useful procedure for estimating the reference heights in an \emph{unknown} regime. 

In this section, we prove our explicit tool for estimating the heights of roots, Lemma \ref{lem:heightEstProcedure}. We also prove Theorem \ref{thm:tierStrucFine}. This is a direct corollary of our later Proposition \ref{prop:tierRefined} considered with reference to Theorem \ref{thm:singleTierStruc}.

\begin{lemma}\label{lem:heightEstProcedure}We fix a set of exponents $k_1<k_2<\ldots <k_L$ and consider polynomials \[\Psi(t)=x+y_1t^{k_1}+\ldots+y_Lt^{k_L},\]
with $y_L\neq 0$. The roots of $\Psi$ are structured into tiers, $\mathcal{T}_1$, $\mathcal{T}_2$, $\ldots$ $\mathcal{T}_s$,  according with Theorem \ref{thm:tierStruc}.
There exists an algorithm outputting a sequence of height estimates
 $\eta_1\geq\eta_2\geq\ldots\geq\eta_a$, where $1\leq s\leq a\leq L$, which satisfy the following.
 
 Associated with $\eta_1$, $\eta_2$,$\ldots$ $\eta_a$ are indices $0=\alpha(0),\alpha(1),\alpha(2),\ldots, \alpha(a)$ such that \[\eta_j^{D(\alpha(j))-D(\alpha(j-1))}=\left|\frac{y_{L-\alpha(j)}}{y_{L-\alpha(j-1)}}\right|.\]
 
For each $0\leq b<a$, we proceed by setting \[\eta_{b+1}=\max_{\alpha(b)<j\leq L}\left|\frac{y_{L-j}}{y_{L-\alpha(b)}}\right|^{\frac{1}{D(j)-D(\alpha(b))}}=\left|\frac{y_{L-\alpha(b+1)}}{y_{L-\alpha(b)}}\right|^{\frac{1}{D(\alpha(b+1))-D(\alpha(b))}}.\]
 
 There exist $0=\beta(0)$, $\beta(1)$, $\beta(2)$, $\ldots$ $\beta(s)= a$ such that
\[\eta_j\sim h(\mathcal{T}_r),\text{ for }\beta(r-1)<j \leq \beta(r).\]
Furthermore, $\alpha(\beta(r))=L(r)$, with the $l_j$ and $L(j)$ defined as in Definition \ref{def:tierDef}.

We can reformulate the previous comparison as follows: for $\alpha(j)$ with $L(r-1)+1\leq \alpha(j)\leq L(r)$, 
\begin{equation}\label{eq:heightEstCompRefHeight}\eta_j\sim h(\mathcal{T}_r).\end{equation}
\end{lemma}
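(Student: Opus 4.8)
The plan is to recognise the algorithm as the computation of a Newton polygon (an upper convex hull) and to identify its edges with the reference heights $h(\mathcal{T}_r)$ through the elementary symmetric functions. First I would record the Vieta identity $S_{D(m)}(\mathcal{R}) = y_{L-m}/y_L$ for $0\le m\le L$ (with the convention $y_0 = x$): one expands $\Psi(t) = y_L\prod_i(t-z_i) = y_L\sum_n S_n(\mathcal{R})\,t^{k_L-n}$ and reads off the coefficient of $t^{k_{L-m}} = t^{k_L-D(m)}$. Hence the quantity maximised at each step, $\bigl|y_{L-j}/y_{L-\alpha(b)}\bigr|^{1/(D(j)-D(\alpha(b)))}$, is the exponential of the slope of the chord joining the planar points $P_{\alpha(b)} = \bigl(D(\alpha(b)),\,\log|S_{D(\alpha(b))}(\mathcal{R})|\bigr)$ and $P_j = \bigl(D(j),\,\log|S_{D(j)}(\mathcal{R})|\bigr)$; so the algorithm traces, from left to right, the upper convex hull of $\{P_m : 0\le m\le L,\ y_{L-m}\neq 0\}$ (which always contains $P_0 = (0,0)$ and $P_L = (k_L,\log|x/y_L|)$, as $x,y_L\neq 0$), the $\alpha(j)$ being its successive vertices and the $\log\eta_j$ its successive edge slopes. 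This makes the monotonicity $\eta_1\ge\cdots\ge\eta_a$ and the bounds $1\le s\le a\le L$ immediate, and, writing $\sigma_n := \sum_{i=1}^{n}\log|z_i|$ (all $z_i\neq 0$ since $\Psi(0)=x\neq 0$), reduces the statement to comparing the points $P_m$ with the concave piecewise-linear function $n\mapsto\sigma_n$.

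The two estimates I would establish are as follows. (a) For every $m$, $|S_{D(m)}(\mathcal{R})|\le\binom{k_L}{D(m)}\prod_{i\le D(m)}|z_i|$, so each $P_m$ lies on or below the graph of $\sigma$, up to an additive constant depending only on $k_1,\dots,k_L$. (b) At a tier boundary $m = L(r)$ the point $P_m$ lies \emph{on} that graph, up to the same sort of constant: the top $D(L(r))$ roots are exactly $\mathcal{T}_1\cup\cdots\cup\mathcal{T}_r$, so $|z_{D(L(r))}|\sim h(\mathcal{T}_r)$ while $|z_{D(L(r))+1}| = h(\mathcal{T}_{r+1})\ll h(\mathcal{T}_r)$ by Lemma \ref{lem:cancComp} and Definition \ref{def:tierDef}; hence in $S_{D(L(r))}(\mathcal{R})$ the term $\prod_{i\le D(L(r))}(-z_i)$ dominates every other subset in the sum (any other subset omits some $z_i$, $i\le D(L(r))$, and includes some $z_j$, $j>D(L(r))$, losing a factor $\gg 1$), so $|S_{D(L(r))}(\mathcal{R})|\sim\prod_{i\le D(L(r))}|z_i| = e^{\sigma_{D(L(r))}}$ --- and for $r = s$ this is the exact identity $|S_{k_L}(\mathcal{R})| = |x/y_L|$. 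Finally, by Lemma \ref{lem:cancComp} the increment $\sigma_n - \sigma_{n-1} = \log|z_n|$ equals $\log h(\mathcal{T}_r)$ up to a bounded additive error whenever $z_n\in\mathcal{T}_r$, so $\sigma$ stays within bounded distance of the $s$-edge polygon $\Gamma$ through $P_0, P_{L(1)},\dots,P_{L(s)} = P_L$, whose $r$-th edge has slope $\log h(\mathcal{T}_r)$ up to a bounded error.

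To conclude: since $h(\mathcal{T}_1)\gg\cdots\gg h(\mathcal{T}_s)$, with the tier regime of Definition \ref{def:tierDef} chosen so that this separation dominates the finitely many exponent-dependent constants above, $\Gamma$ is genuinely concave and each $P_{L(r)}$ lies strictly above every chord $\overline{P_iP_j}$ with $i<L(r)<j$; so the $P_{L(r)}$ are vertices of the true hull and no hull edge joins indices lying in two different tiers. Between consecutive tier boundaries $\sigma$ --- hence the points $P_m$ below it --- stays within bounded distance of the corresponding chord of $\Gamma$, so any further hull vertices occur only at indices $m$ with $L(r-1)<m\le L(r)$, and every hull edge lies over a single tier's range with slope $\log h(\mathcal{T}_r)$ up to a bounded error. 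Exponentiating, $\eta_j\sim h(\mathcal{T}_r)$ for every edge whose right endpoint $\alpha(j)$ satisfies $L(r-1)+1\le\alpha(j)\le L(r)$; taking $\beta(r)$ to be the index of the hull edge terminating at the vertex $L(r)$ then gives $\alpha(\beta(r)) = L(r)$, $0=\beta(0)<\beta(1)<\cdots<\beta(s) = a$, and precisely the asserted comparisons, including \eqref{eq:heightEstCompRefHeight}.

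The part I expect to be the main obstacle is the quantitative bookkeeping in the last step --- verifying that the boundary points $P_{L(r)}$ really survive as hull vertices and that no spurious edge bridges two tiers --- since this is exactly what pins down how strong the tier separation in Definition \ref{def:tierDef} must be, and it requires carrying the additive constants (entering through binomial factors like $\binom{k_L}{D(m)}$ and the comparison constants of Lemma \ref{lem:cancComp}) through the convexity argument. The greedy left-to-right form of the algorithm also suggests an alternative proof by induction on $s$: establish $\eta_1,\dots,\eta_{\beta(1)}\sim h(\mathcal{T}_1)$ and $\alpha(\beta(1)) = L(1)$ directly from the first few steps, then apply the inductive hypothesis to the polynomial obtained by removing the factor corresponding to $\mathcal{T}_1$.
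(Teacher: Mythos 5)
Your proposal is correct and rests on exactly the two estimates that drive the paper's proof, namely the boundary comparison $|S_{D(L(r))}(\mathcal{R})|\sim\prod_{i\le D(L(r))}|z_i|\sim h(\mathcal{T}_1)^{D(\mathcal{T}_1)}\cdots h(\mathcal{T}_r)^{D(\mathcal{T}_r)}$ and the intermediate upper bound $|S_{D(j)}(\mathcal{R})|\lesssim\prod_{i\le D(j)}|z_i|$, so it is essentially the same argument. Your Newton-polygon (upper convex hull) packaging is a global reformulation of the paper's stepwise induction, in which the paper's contradiction step (``the greedy maximum defining $\eta_{b+1}$ cannot overshoot a tier boundary $L(r)$'') becomes the observation that $P_{L(r)}$ lies strictly above every chord bridging it once the tier separation dominates the exponent-dependent constants.
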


\begin{proof}
Suppose we are in the regime indexed by $(l_1,l_2,\ldots,l_s)$, as in Definition \ref{def:tierDef}. We can easily verify that \begin{equation}\label{eq:heightCompMainTerm}\left|S_{D(L(i))}(\mathcal{R})\right|\sim h(\mathcal{T}_1)^{D(\mathcal{T}_1)}\ldots h(\mathcal{T}_i)^{D(\mathcal{T}_i)}.\end{equation}
Indeed, $(-z_1)(-z_2)\ldots (-z_{D(L(i))})$ is the term of largest magnitude appearing in the sum $S_{D(L(i))}(\mathcal{R})$ and it is comparable in magnitude to $h(\mathcal{T}_1)^{D(\mathcal{T}_1)}\ldots h(\mathcal{T}_i)^{D(\mathcal{T}_i)}.$ Since they must include a root from a smaller tier, all the remaining terms in the sum $S_{D(L(i))}(\mathcal{R})$ are bounded in magnitude by \[h(\mathcal{T}_1)^{D(\mathcal{T}_1)}\ldots h(\mathcal{T}_i)^{D(\mathcal{T}_i)-1} h(\mathcal{T}_{i+1})\ll h(\mathcal{T}_1)^{D(\mathcal{T}_1)}\ldots h(\mathcal{T}_i)^{D(\mathcal{T}_i)}.\]
Similarly, note that, for $L(t-1)+1\leq j\leq L(t)$, \begin{equation}\label{eq:heightCompIntermediateTerm}|S_{D(j)}(\mathcal{R})|\lesssim h(\mathcal{T}_{1})^{D(\mathcal{T}_{1})}h(\mathcal{T}_{2})^{D(\mathcal{T}_{2})}\ldots h(\mathcal{T}_{t-1})^{D(\mathcal{T}_{t-1})}h(\mathcal{T}_{t})^{D(j)-D(L(t-1))}.\end{equation}

Set \[\eta_1=\max_{1\leq j\leq L}|S_{D(j)}(\mathcal{R})|^\frac{1}{D(j)}=|S_{D(\alpha(1))}(\mathcal{R})|^\frac{1}{D(\alpha(1))}=\left|\frac{y_{L-\alpha(1)}}{y_{L}}\right|^{\frac{1}{D(\alpha(1))}}.\]
Since all roots are bounded in magnitude by $h(\mathcal{T}_1)$ and, from \eqref{eq:heightCompMainTerm}, $S_{D(L(1))}(\mathcal{R})\sim h(\mathcal{T}_1)^{D(L(1))}$, we then see that $\eta_1\sim h(\mathcal{T}_1)$.

Next, we set \[\eta_2=\max_{\alpha(1)< j\leq L}\left|\frac{S_{D(j)}(\mathcal{R})}{S_{D(\alpha(1))}(\mathcal{R})}\right|^\frac{1}{D(j)-D(\alpha(1))}\]\[=\left|\frac{S_{D(\alpha(2))}(\mathcal{R})}{S_{D(\alpha(1))}(\mathcal{R})}\right|^\frac{1}{D(\alpha(2))-D(\alpha(1))}=\left|\frac{y_{L-\alpha(2)}}{y_{L-\alpha(1)}}\right|^\frac{1}{D(\alpha(2))-D(\alpha(1))}.\]

Having determined $\eta_1$, $\eta_2$, $\ldots$, $\eta_{i}$ and corresponding $\alpha(1)$, $\alpha(2)$, $\ldots$, $\alpha(i)<L$, we set \begin{equation}\label{eq:heightEstRec}\eta_{i+1}=\max_{\alpha(i)< j\leq L}\left|\frac{S_{D(j)}(\mathcal{R})}{S_{D(\alpha(i))}(\mathcal{R})}\right|^\frac{1}{D(j)-D(\alpha(i))}\end{equation}\[=\left|\frac{S_{D(\alpha(i+1))}(\mathcal{R})}{S_{D(\alpha(i))}(\mathcal{R})}\right|^\frac{1}{D(\alpha(i+1))-D(\alpha(i))}=\left|\frac{y_{L-\alpha(i+1)}}{y_{L-\alpha(i)}}\right|^\frac{1}{D(\alpha(i+1))-D(\alpha(i))}.\]

We wish to ensure that at least one height estimate corresponds to every reference height. This is ensured provided $h_{l(r)}\gg h_{l(r)+1}$ for each $1\leq r \leq s$ with suitable constants in Definition \ref{def:tierDef}. In the procedure defined above, we show that each $y_{L-L(t)}$ appears in one of the expressions for $\eta_j$, \eqref{eq:heightEstRec}. Furthermore, for terms picked up in the procedure, the bound \eqref{eq:heightCompIntermediateTerm} can be upgraded to a comparison. We show this inductively.

First, take the largest $0\leq \alpha(\beta_1-1)$ such that $\alpha(\beta_1-1)<L(1)$. Observe that \[\left|S_{D(L(1))}(\mathcal{R})\right|^{\frac{1}{D(L(1))}}=\left|\frac{y_{L-L(1)}}{y_L}\right|^{\frac{1}{D(L(1))}}\sim h(\mathcal{T}_1),\] by \eqref{eq:heightCompMainTerm}. Thus we can see, by definition of our height estimates, that, if $\alpha(\beta_1-1)>0$, then $\left|S_{D(\alpha(\beta_1-1))}\right|^{ \frac{1}{ D(\alpha(\beta_1-1)) } }\gtrsim h(\mathcal{T}_1)$. Considering \eqref{eq:heightCompIntermediateTerm}, we then see that either $\alpha(\beta_1-1)=0$ or \begin{equation}\label{eq:SalphaInitialComp}\left|S_{D(\alpha(\beta_1-1))}\right|^{ \frac{1}{ D(\alpha(\beta_1-1)) } }\sim h(\mathcal{T}_1).\end{equation}
We now wish to show that $\alpha(\beta_1)=L(1)$. Suppose for contradiction that $\alpha(\beta_1)>L(1)$. Set $\tilde{h}=h(\mathcal{T}_{2})$ and $h=h(\mathcal{T}_{1})$. We know that $\tilde{h}\ll h$. Using the equations \eqref{eq:heightCompIntermediateTerm} and \eqref{eq:SalphaInitialComp} to bound the size of the symmetric functions in terms of height estimates, we see that
\[\eta_{\beta_1}=\left|\frac{y_{L-\alpha(\beta_1)}}{y_{L-\alpha(\beta_1-1)}}\right|^{\frac{1}{D(\alpha(\beta_1))-D(\alpha(\beta_1-1))}}\]
\[=\left|\frac{S_{D(\alpha(\beta_1))}(\mathcal{R})}{S_{D(\alpha(\beta_1-1))}(\mathcal{R})}\right|^{\frac{1}{D(\alpha(\beta_1))-D(\alpha(\beta_1-1))}}\]
\[\lesssim\left|\frac{\tilde{h}^{D(\alpha(\beta_1))-D(L(1))}h^{D(L(1))}}{h^{D(\alpha(\beta_1-1))}}\right|^{\frac{1}{D(\alpha(\beta_1))-D(\alpha(\beta_1-1))}}\]
\[\leq\left|\frac{\tilde{h}^{D(\alpha(\beta_1))-D(L(1))}}{h^{D(\alpha(\beta_1))-D(L(1))}}h^{D(\alpha(\beta_1)))-D(\alpha(\beta_1-1))}\right|^{\frac{1}{D(\alpha(\beta_1))-D(\alpha(\beta_1-1))}}\]
\[\ll h=\left|h^{D(L(1))-D(\alpha(\beta_1-1))}\right|^{\frac{1}{D(L(1))-D(\alpha(\beta_1-1))}}\]
\[\sim\left|\frac{S_{D(L(1))}(\mathcal{R})}{S_{D(\alpha(\beta_1-1))}(\mathcal{R})}\right|^{\frac{1}{D(L(1))-D(\alpha(\beta_1-1))}}\]
\[=\left|\frac{y_{L-L(1)}}{y_{L-\alpha(\beta_1-1)}}\right|^{\frac{1}{D(L(1))-D(\alpha(\beta_1-1))}}.\]
This contradicts the definition of $\eta_{\beta_1}$, so we must have that $\alpha(\beta_1)=L(1)$. Furthermore, for $0=\beta(0)< i \leq \beta(1)=\beta_1$, \[\eta_{i}\sim h(\mathcal{T}_1).\]

We proceed inductively. Fix some index $r$. Suppose that $\beta(r-1)$ is such that $\alpha(\beta(\tilde{r}))=L(\tilde{r})$ for $1\leq \tilde{r}\leq r-1$ and, for $1\leq \tilde{r}<r$  and $\beta(\tilde{r}-1)<i\leq \beta(\tilde{r})$, \begin{equation}\label{eq:heightEstComparisonA}\eta_i\sim h(\mathcal{T}_{\tilde{r}}).\end{equation}
We then choose $\beta_{r}$ maximally so that $\alpha(\beta(r-1))=L(r-1)\leq \alpha(\beta_{r}-1)<L(r)$. Similarly to our proof of \eqref{eq:SalphaInitialComp}, it is routine to verify that \[\left|\frac{y_{L(r)}}{y_{L(r-1)}}\right|^{\frac{1}{D(L(r))-D(\alpha(\beta(r-1)))}}=\left|\frac{S_{D(L(r))}(\mathcal{R})}{S_{D(\alpha(L(r-1))}(\mathcal{R})}\right|^{\frac{1}{D(L(r))-D(\alpha(\beta(r-1)))}}\sim h(\mathcal{T}_{r}),\]
we easily see that, for $\beta(r-1)<i\leq \beta(r)$, \begin{equation}\label{eq:heightEstComparisonB}\eta_{i}\sim h(\mathcal{T}_{r}).\end{equation} As a consequence of \eqref{eq:heightEstComparisonA} and \eqref{eq:heightEstComparisonB}, we find that, for $\beta(r-1)<i\leq \beta(r)$,
\[|S_{D(\alpha(i))}(\mathcal{R})|=\left|\prod_{l=1}^i\eta_i^{D(\alpha(l))-D(\alpha(l-1))}\right|\] 
\begin{equation}\label{eq:heightCompDerivedTerm}\sim h(\mathcal{T}_{1})^{D(\mathcal{T}_{1})}h(\mathcal{T}_{2})^{D(\mathcal{T}_{2})}\ldots h(\mathcal{T}_{r-1})^{D(\mathcal{T}_{r-1})}h(\mathcal{T}_{r})^{D(\alpha(i))-D(L(r-1))}.\end{equation} As previously, we now wish to show that $\alpha(\beta_{r})=L(r)$. Suppose, then, to find a contradiction, that $\alpha(\beta_{r})>L(r)$. Similarly to \eqref{eq:heightCompDerivedTerm}, we find, setting $h=h(\mathcal{T}_r)$ and $\tilde{h}=h(\mathcal{T}_{r+1})$, that
\[\eta_{\beta_{r}}=\left|\frac{y_{L-\alpha(\beta_{r})}}{y_{L-\alpha(\beta_{r}-1)}}\right|^{\frac{1}{D(\alpha(\beta_{r}))-D(\alpha(\beta_{r}-1))}}=\left|\frac{S_{D(\alpha(\beta_{r}))}(\mathcal{R})}{S_{D(\alpha(\beta_{r}-1))}(\mathcal{R})}\right|^{\frac{1}{D(\alpha(\beta_{r}))-D(\alpha(\beta_{r}-1))}}\]
\[\lesssim\left|\frac{h(\mathcal{T}_{1})^{D(\mathcal{T}_{1})}h(\mathcal{T}_{2})^{D(\mathcal{T}_{2})}\ldots h(\mathcal{T}_{r})^{D(\mathcal{T}_{r})}h(\mathcal{T}_{r+1})^{D(\alpha(\beta_{r}))-D(L(r))}}{h(\mathcal{T}_{1})^{D(\mathcal{T}_{1})}h(\mathcal{T}_{2})^{D(\mathcal{T}_{2})}\ldots h(\mathcal{T}_{r-1})^{D(\mathcal{T}_{r-1})}h(\mathcal{T}_{r})^{D(\alpha(\beta_{r}-1))-D(L(r-1))}}\right|^{\frac{1}{D(\alpha(\beta_{r}))-D(\alpha(\beta_{r}-1))}}\]
\[\sim\left|\frac{h^{D(L(r))-D(L(r-1))}\tilde{h}^{D(\alpha(\beta_{r}))-D(L(r))}}{h^{D(\alpha(\beta_{r}-1))-D(L(r-1))}}\right|^{\frac{1}{D(\alpha(\beta_{r}))-D(\alpha(\beta_{r}-1))}}\]
\[=\left|h^{D(\alpha(\beta_{r}))-D(\alpha(\beta_{r}-1))}\frac{\tilde{h}^{D(\alpha(\beta_{r}))-D(L(r))}}{h^{D(\alpha(\beta_{r}))-D(L(r))}}\right|^{\frac{1}{D(\alpha(\beta_{r}))-D(\alpha(\beta_{r}-1))}}\]
\[\ll h\sim \left|\frac{y_{L(r)}}{y_{L(r-1)}}\right|^{\frac{1}{D(L(r))-D(\alpha(\beta(r-1)))}},\]
which contradicts the definition of $\eta_{\beta_r}$.

\end{proof}

We can use the procedure from Lemma \ref{lem:heightEstProcedure} to obtain the refined structural result, Theorem \ref{thm:tierStrucFine}. Here, with additional restrictions on the coefficients, we have stronger control on the root structure as a consequence of the explicit estimates for the reference heights. The following proposition feeds directly into our result on the structure of roots within a given tier, Theorem \ref{thm:singleTierStruc}, to give the refined structural result, Theorem \ref{thm:tierStrucFine}, as a corollary. 

\begin{proposition}\label{prop:tierRefined}
Fix the set of exponents $k_1<k_2<\ldots<k_L$. We consider polynomials \[\Psi(t)=x+y_1t^{k_1}+\ldots+y_Lt^{k_L}\]
such that \[\max_{1\leq j\leq L}|y_j|^\frac{1}{k_j}\leq 1.\]
Take some $\gamma\in (0,1]$ and suppose, additionally, that
\[|y_m|^\frac{1}{k_m}\geq \gamma ,\]
and, for $n>m$,
\[|y_n|^\frac{1}{k_n}\leq \delta,\]
for some $m$ and some suitably small $\delta=\delta(\gamma)>0$. Then the roots of $\Psi$ can be classified into large and small tiers $\mathcal{T}_1$, $\ldots$ $\mathcal{T}_{s(1)}$ and $\mathcal{T}_{s(1)+1}$, $\ldots$ $\mathcal{T}_{s(1)+s(2)}$ which satisfy the following.

First, we have that $0\leq s(1)\leq s(1)+s(2)=s\leq L$ and, additionally $s(1)\leq L-m+1$. If $s(2)\geq 1$, then $s(1)\leq L-m$. 

We refer to those tiers $\mathcal{T}_r$ with $1\leq r\leq s(1)$ as the large tiers. For any root $w$ in a large tier, we have that $|w|\gtrsim_{\gamma}1$. In the case that $s(2)\geq 1$, we refer to those tiers $\mathcal{T}_r$ with $s(1)+1\leq r\leq s(1)+s(2)$ as the small tiers. For any root $w$ in a small tier, we have that $|w|\lesssim_{\gamma}1$. 

The tiers are well separated: for any choice of $w_j\in \mathcal{T}_j$, we have that \[|w_1|\ll |w_2|\ll\ldots\ll|w_{s}|.\] 

Finally, we have the following. In the case that $s(2)\geq 1$, we have that $L(s(1))=L-m$. If $s(2)=0$, then, $l_{s(1)}\geq m$ and, for $L-m<L(s-1)+j<L$,
\[\left|S_{D(L(s-1)+j)}(\mathcal{R})\right|\ll h(\mathcal{T}_1)^{D(\mathcal{T}_1)}h(\mathcal{T}_2)^{D(\mathcal{T}_2)}\ldots h(\mathcal{T}_s)^{D(L(s-1)+j)-D(L(s-1))}.\]
\end{proposition}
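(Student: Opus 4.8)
The plan is to run the height–estimation algorithm of Lemma~\ref{lem:heightEstProcedure} on $\Psi$ and to read the large/small dichotomy off the output sequence $\eta_1\ge\eta_2\ge\cdots\ge\eta_a$. The bridge to the hypotheses is the Vieta identity $S_{D(j)}(\mathcal R)=y_{L-j}/y_L$, which turns the defining relations of the algorithm into
\[
\eta_{b+1}^{\,D(\alpha(b+1))-D(\alpha(b))}=\Bigl|\tfrac{y_{L-\alpha(b+1)}}{y_{L-\alpha(b)}}\Bigr|
\qquad\text{and}\qquad
|S_{D(\alpha(i))}(\mathcal R)|=\prod_{l\le i}\eta_l^{\,D(\alpha(l))-D(\alpha(l-1))},
\]
the second identity exhibiting $|S_{D(\alpha(i))}(\mathcal R)|^{1/D(\alpha(i))}$ as a weighted geometric mean of $\eta_1,\dots,\eta_i$, hence lying between $\eta_i$ and $\eta_1$. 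Throughout I would keep in hand the two conclusions of that lemma that do the organisational work: $\alpha(\beta(r))=L(r)$, and $\eta_j\sim h(\mathcal T_r)$ for $\beta(r-1)<j\le\beta(r)$, so that classifying tiers is the same as classifying the height estimates $\eta_j$.

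The heart of the matter is a dichotomy at the scale $1$. First I would show $\eta_j\gtrsim_\gamma 1$ as long as the running index has not passed $L-m$, i.e.\ whenever $\alpha(j-1)<L-m$: then $L-m$ is one of the competitors in the maximum defining $\eta_j$, so $\eta_j\ge |S_{D(L-m)}(\mathcal R)/S_{D(\alpha(j-1))}(\mathcal R)|^{1/(D(L-m)-D(\alpha(j-1)))}$; using $|y_L|\le1$ gives $|S_{D(L-m)}(\mathcal R)|=|y_m/y_L|\ge\gamma^{k_m}$, and an induction on $j$ carrying the hypothesis $\eta_1,\dots,\eta_{j-1}\gtrsim_\gamma 1$ (which, via the geometric–mean identity, forces $|S_{D(\alpha(j-1))}(\mathcal R)|\gtrsim_\gamma 1$ as well) makes the base of this power $\gtrsim_\gamma 1$, costing only a further bounded $\gamma$–power. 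Secondly, once the running index reaches $L-m$ one invokes $|y_n|^{1/k_n}\le\delta$ for $n>m$, with $\delta=\delta(\gamma)$ taken small enough that any subsequent height estimate which can occur is forced to be $\ll1$; it is precisely whether such a drop occurs that distinguishes $s(2)\ge1$ from $s(2)=0$. One then declares a tier large if its reference height is $\gtrsim_\gamma 1$ and small otherwise; a small tier has $|z|<1$, so automatically $|z|\lesssim_\gamma 1$, while each large–tier root is comparable to its reference height, hence $\gtrsim_\gamma 1$, and the separation of tiers is inherited verbatim from Definition~\ref{def:tierDef}.

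The remaining structural bookkeeping then follows. The large tiers exhaust the height estimates whose $\alpha$–index is at most $L-m$; since $0<\alpha(1)<\alpha(2)<\cdots$, there are at most $L-m$ such estimates, plus possibly one tier straddling the scale $1$, whence $s(1)\le L-m+1$, improving to $s(1)\le L-m$ as soon as a genuinely small tier is present. Because the first half of the dichotomy holds for \emph{every} $j$ with $\alpha(j-1)<L-m$, the transition from large to small must occur exactly at $\alpha$–index $L-m$; matching endpoints via $\alpha(\beta(s(1)))=L(s(1))$ then gives $L(s(1))=L-m$ when $s(2)\ge1$ --- so there are $D(L-m)=k_L-k_m$ large and $k_m$ small roots --- and $L(s-1)\le L-m$, i.e.\ $l_{s(1)}=l_s\ge m$, when $s(2)=0$. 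In the latter case, for the level indices $L-m<L(s-1)+j<L$ the quantity $S_{D(L(s-1)+j)}(\mathcal R)=y_{L-L(s-1)-j}/y_L$ was passed over by every maximum in the algorithm, so it is not one of the $\alpha(\cdot)$; tracking the inequality chains in the proof of Lemma~\ref{lem:heightEstProcedure}, together with the strong separation imposed on the tier regime, upgrades this to $|S_{D(L(s-1)+j)}(\mathcal R)|\ll h(\mathcal T_1)^{D(\mathcal T_1)}\cdots h(\mathcal T_s)^{D(L(s-1)+j)-D(L(s-1))}$, which is the last assertion.

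The main obstacle is the first half of the dichotomy: because $\Psi$ is not normalised and $y_L$ (and $x$) may be very small, the normalising denominators $S_{D(\alpha(j-1))}(\mathcal R)$ are not a priori bounded, so the induction must simultaneously propagate the lower bound $\eta_l\gtrsim_\gamma 1$ for all $l<j$ and keep precise track of which $\alpha$–indices have already been passed. A secondary technical point is to quantify, in terms of $\gamma$ and the fixed exponents $k_1,\dots,k_L$, how small $\delta$ must be chosen for the ``small'' height estimates to be genuinely $\ll1$ and for the two regimes $s(2)\ge1$ and $s(2)=0$ to separate cleanly.
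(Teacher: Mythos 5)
Your overall strategy coincides with the paper's: run the height estimation algorithm of Lemma \ref{lem:heightEstProcedure}, treat the index $L-m$ as the pivot, show the estimates are $\gtrsim_\gamma 1$ before the pivot, and analyse the drop afterwards. However, two steps as you have written them do not close. First, your induction for the lower bound $\eta_j\gtrsim_\gamma 1$ runs the wrong way: to bound $\bigl(|S_{D(L-m)}(\mathcal R)|/|S_{D(\alpha(j-1))}(\mathcal R)|\bigr)^{1/p}$ from below you need an \emph{upper} bound on the denominator, but the hypothesis you propagate ($\eta_1,\dots,\eta_{j-1}\gtrsim_\gamma 1$, hence $|S_{D(\alpha(j-1))}(\mathcal R)|\gtrsim_\gamma 1$) is a lower bound, and no upper bound is available when $|y_L|$ is tiny. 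The obstacle you flag as ``the main obstacle'' is in fact illusory: the $y_L$'s cancel, $S_{D(L-m)}(\mathcal R)/S_{D(\alpha(j-1))}(\mathcal R)=y_m/y_{L-\alpha(j-1)}$, and the normalisation $|y_{L-\alpha(j-1)}|^{1/k_{L-\alpha(j-1)}}\le 1$ gives the bound directly, with no induction --- which is exactly what the paper does.

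Second, the assertion that ``the transition from large to small must occur exactly at $\alpha$-index $L-m$'' is not justified: the algorithm may skip $L-m$ entirely, i.e.\ with $i_0$ maximal such that $\alpha(i_0)\le L-m$ one can have $\alpha(i_0)<L-m$, and a priori the next maximum could be attained at any index in $(L-m,L]$. The paper's inequalities \eqref{eq:lowerHeightEstsDontAppear} and \eqref{eq:lowerHeightEstsDontAppear2} --- which use the hypothesis $|y_n|^{1/k_n}\le\delta$ for $n>m$ in an essential, quantitative way --- show that candidates at indices strictly between $L-m$ and $L$ can never win, so if $L-m$ is skipped the algorithm jumps straight to $L$ and one lands in the $s(2)=0$ case; without this, your claims $L(s(1))=L-m$ (when $s(2)\ge1$) and $l_{s(1)}\ge m$ (when $s(2)=0$) are unproved. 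Relatedly, taking $\delta$ small does not force the post-pivot estimates to be $\ll1$: once $\alpha(i_0)=L-m$ the candidates $|y_j/y_m|^{1/(\cdot)}$ for $1\le j<m$ involve no $\delta$ and are only $\lesssim_\gamma 1$, while the candidate coming from $x$ is not constrained by the normalisation at all; the operative dichotomy is whether $\eta_{i_0+1}\ll\eta_{i_0}$, and the role of $\delta$ is to make $\eta_{i_0}\gg_\gamma 1$ so that $\lesssim_\gamma 1$ constitutes a genuine tier drop. Finally, for the last assertion, ``being passed over by every maximum'' yields only a non-strict $\lesssim$; the required $\ll$ again comes from the quantitative inequalities just cited, not from the separation constants of the tier regime.
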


Before proceeding with the proof of Proposition \ref{prop:tierRefined}, let us show how Theorem \ref{thm:tierStrucFine} is obtained as a corollary.
\begin{proof}[Proof of Theorem \ref{thm:tierStrucFine}]
Proposition \ref{prop:tierRefined} already gives the large and small tiers, and the required control on their size. It remains to determine the way in which roots may cluster. To do so, we work with reference to the final paragraph of the proposition's statement.

Let us first consider the case that $s(2)\geq 1$. Here we can see, since $L(s(1))=L-m$, that for $1\leq r \leq s(1)$, $l_r\leq L(s(1))\leq L-m$. Therefore $|\mathcal{D}(\mathcal{T}_r)|-1= l_r\leq L-m$ and, by Theorem \ref{thm:singleTierStruc}, $B(w,\epsilon |w|)$ contains \emph{at most} $L-m$ roots for any root $w$ in a large tier $\mathcal{T}_r$. Likewise, for $s(1)+1\leq r \leq s(1)+s(2)$, $l_r\leq L-L(s(1))=m$ so that, for any root $w$ in a small tier $\mathcal{T}_r$, $B(w,\epsilon|w|)$ contains \emph{at most} $m$ roots.

In the case that $s(2)=0$, we work as follows. We consider a tier $\mathcal{T}_r$. If $l_r\leq L-m+1$, then, as above, one has that for any root $w$ the tier $\mathcal{T}_r$, $B(w,\epsilon|w|)$ contains \emph{at most} $L-m+1$ roots. Otherwise, $L(r)\geq l_r>L-m+1$ so that $L(s)-L(r)=L-L(r)<m-1$. Since $l_s\geq m$ we must then have that $r=s$ because otherwise we would have $m\leq l_s\leq L(s)-L(r)<m-1$. According with Lemma \ref{lem:symEqWithinTier}, for $1\leq j\leq l_s$, we then approximate $S_{D_j(\mathcal{T}_s)}(\mathcal{T}_s)$ by  \[\frac{S_{D(L(s-1)+j)}(\mathcal{R})}{S_{D(L(s-1))}(\mathcal{T}_1\cup\ldots\cup\mathcal{T}_{s-1})}.\] In particular,  we have for $1\leq j\leq l_s$ that
\[\left|S_{D_j(\mathcal{T}_s)}(\mathcal{T}_s)-\frac{S_{D(L(s-1)+j)}(\mathcal{R})}{S_{D(L(s-1))}(\mathcal{T}_1\cup\ldots\cup\mathcal{T}_{s-1})}\right|\ll h(\mathcal{T}_s)^{D_j(\mathcal{T}_s)}.\]
Combining this with Proposition \ref{prop:tierRefined}, for $L-m<L(s-1)+j<L$, we see that 
\[\left|S_{D_j(\mathcal{T}_s)}(\mathcal{T}_s)\right|\ll h(\mathcal{T}_s)^{j}.\]
From this, it is a matter of counting to see that \emph{at most} $L-m+1$ non-trivial symmetric functions $S_j(\mathcal{T}_s)$ are substantial: we can choose $\widetilde{\mathcal{D}}(\mathcal{T}_s)\subset\mathcal{D}(\mathcal{T}_s)$ in Theorem \ref{thm:singleTierStruc}, with $|\widetilde{\mathcal{D}}(\mathcal{T}_s))|-1\leq L-m+1$, so that, for $j\notin \widetilde{\mathcal{D}}(\mathcal{T}_s)$, $\left|S_j(\mathcal{T}_s)\right|\ll h(\mathcal{T}_s)^{j}$. Therefore, by Theorem \ref{thm:singleTierStruc}, for any root $w$, $B(w,\epsilon|w|)$ contains \emph{at most} $L-m+1$ roots.
\end{proof}

\begin{proof}[Proof of Proposition \ref{prop:tierRefined}]
We consider what the supposed conditions tell us under the height estimation procedure, Lemma \ref{lem:heightEstProcedure}. We obtain a sequence of reference heights $\eta_1,\eta_2,\ldots$ and corresponding indices $\alpha(1),\alpha(2),\ldots$ satisfying the conditions of that lemma. 

In the case that $m=L$, there is nothing to prove. The height estimates are all $\lesssim_{\gamma}1$. We set $s(1)=0$ so that $L(s(1))=0$ and each tier is a small tier. In what follows, we consider the case $m<L$.

Firstly, observe that $\left|\frac{y_m}{y_L}\right|^{\frac{1}{D(L-m)}}\geq \left|\frac{\gamma^{k_m}}{\delta^{k_L}}\right|^{\frac{1}{D(L-m)}}\gg 1$, provided $\delta$ is sufficiently small. As such, we are guaranteed to pick up a number of \emph{large} height estimates $\eta_j\gtrsim_{\gamma}1$.

Let $0\leq i_0$ be chosen maximally so that $\alpha(i_0)\leq L-m$. We either have that $\alpha(i_0)=L-m$ or $\alpha(i_0)<L-m$ and we first split our analysis by these cases.  Since, for $n>m$, \[\left|\frac{y_m}{y_n}\right|^{\frac{1}{D(L-m)-D(L-n)}}\gtrsim_{\gamma}1\]
it is easy to see that 
\[\eta_{\max\lbrace i_0,1\rbrace}\gtrsim_{\gamma}1.\]

In the first case, where $L-\alpha(i_0)=m$, note that $i_0\geq 1$. We then observe that, provided we choose $\delta$ is chosen sufficiently small depending on $\gamma$, for $1\leq j<m$,
\[\left|\frac{y_{j}}{y_{m}}\right|^{\frac{1}{D(L-j)-D(L-m)}}\leq \left|\frac{1}{\gamma^{k_m}}\right|^{\frac{1}{D(L-j)-D(L-m)}}\]
\begin{equation}\label{eq:lowerHeightEstsDontAppear}\ll \left|\frac{\gamma^{k_m}}{\delta^{k_{L-\alpha(i_0-1)}}}\right|^{\frac{1}{D(L-m)-D(\alpha(i_0-1))}}\leq\left|\frac{y_m}{y_{L-\alpha(i_0-1)}}\right|^{\frac{1}{D(L-m)-D(\alpha(i_0-1))}}= \eta_{i_0},\end{equation}
 by definition of the height estimates. Thus we see that either $\eta_{i_0+1}\ll \eta_{i_0}$ or, if $\eta_{i_0+1}\gtrsim \eta_{i_0}$, we must have that $\eta_{i_0+1}=\left|\frac{x}{y_m}\right|^{\frac{1}{D(L)-D(L-m)}}$.

We now consider what happens in the height estimation procedure in the case that $\alpha(i_0)<L-m$. 
We observe that, for $1\leq j<m$,
\[\left|\frac{y_{j}}{y_{L-\alpha(i_0)}}\right|^{\frac{1}{D(L-j)-D(\alpha(i_0))}}\leq\left|\frac{1}{y_{L-\alpha(i_0)}}\right|^{\frac{1}{D(L-j)-D(\alpha(i_0))}}\]
\begin{equation}\label{eq:lowerHeightEstsDontAppear2}\ll \left|\frac{y_m}{y_{L-\alpha(i_0)}}\right|^{\frac{1}{D(L-m)-D(\alpha(i_0))}},\end{equation}
where one can verify that the last inequality holds because it is satisfied in the extreme:
\[\left|\frac{1}{\delta^{k_{L-\alpha(i_0)}}}\right|^{\frac{1}{D(L-j)-D(\alpha(i_0))}}\ll \left|\frac{\gamma^{k_j}}{\delta^{k_{L-\alpha(i_0)}}}\right|^{\frac{1}{D(L-m)-D(\alpha(i_0))}}.\]
Thus, in the case that $\alpha(i_0)<L-m$, we necessarily have that  $\eta_{i_0+1}=\left|\frac{x}{y_{L-\alpha(i_0)}}\right|^{\frac{1}{D(L)-D(\alpha(i_0))}}$ and $\alpha(i_0+1)=L$, since we have specified that $y_m$ contributes to no height estimate by the condition $\alpha(i_0)<L-m$.

We continue our analysis by splitting according to the control between height estimates $\eta(i_0)$ and $\eta(i_0+1)$. Firstly, we analyse the situation where $\eta(i_0+1)\ll \eta(i_0)$, which can only occur if $L-m=\alpha(i_0)$. Secondly, we analyse the situation where either $\eta(i_0+1)\gtrsim \eta(i_0)$ or where $i_0=0$, which can occur with $L-m=\alpha(i_0)$ or with $L-m>\alpha(i_0)$. 

The first scenario is where $\eta(i_0+1)\ll \eta(i_0)$. Here, we have that $\alpha(i_0)=L-m$. This is the $s(2)\geq 1$ case. According with Lemma \ref{lem:heightEstProcedure}, we choose $s(1)$ so that the large tiers $\mathcal{T}_r$, for $1\leq r\leq s(1)$ are those corresponding with the height estimates $\eta_1,\eta_2,\ldots,\eta_{i_0}$. We must have, by \eqref{eq:heightEstCompRefHeight} from Lemma \ref{lem:heightEstProcedure}, that $L(s(1))=\alpha(i_0)=L-m$. We can also see that \[\eta_{i_0}=\left|\frac{y_m}{y_{L-\alpha(i_0-1)}}\right|^{\frac{1}{D(L-m)-D(\alpha(i_0-1))}}\gtrsim_{\gamma}1,\]
and, by Lemma \ref{lem:heightEstProcedure}, for roots $w$ in large tiers,
\[|w|\gtrsim_{\gamma}1.\]
There are also small tiers of roots: following Lemma \ref{lem:heightEstProcedure}, these are the tiers corresponding with the height estimates $\eta_{i_0+1},\eta_{i_0+2},\ldots\eta_{a}$. It is easy to see that $1\gtrsim_{\gamma}\eta_{i_0+1}$, since \[\eta_{i_0+1}=\left|\frac{y_{L-\alpha(i_0+1)}}{y_m}\right|^{\frac{1}{D(\alpha(i_0+1))-D(L-m)}}\leq \left|\frac{1}{\gamma}\right|^{\frac{1}{D(\alpha(i_0+1))-D(L-m)}},\]
and, by Lemma \ref{lem:heightEstProcedure}, for roots $w$ in small tiers,
\[|w|\lesssim_{\gamma}1.\]

In the second scenario,  $\eta(i_0)\sim \eta(i_0+1)$ or there is only one height estimate and $\alpha(1)=L$. In either case, $\alpha(i_0+1)=L$, as we previously showed how, in this case, we must have $\eta_{i_0+1}=\left|\frac{x}{y_m}\right|^{\frac{1}{D(L)-D(L-m)}}$. This is the $s(2)=0$ case, where all tiers are large. By Lemma \ref{lem:heightEstProcedure}, $l_s\geq \alpha(i_0+1)-\alpha(i_0)\geq L-(L-m)=m$. In this case, to conclude the proof, we must establish control the size of the symmetric functions $S_{D(L-j)}(\mathcal{R})$ for $1\leq j<m$. For these $j$, note that  $L-j>L-m\geq L-l_s=L(s-1)$.
We work to show that that, for $1\leq j<m$, \[\left|S_{D(L-j)}(\mathcal{R})\right|\ll h(\mathcal{T}_1)^{D(\mathcal{T}_1)}h(\mathcal{T}_2)^{D(\mathcal{T}_2)}\ldots h(\mathcal{T}_s)^{D(L-j)-D(L(s-1))}.\] Because we know that $\eta_{i_0+1}=\left|\frac{x}{y_{L-\alpha(i_0)}}\right|^{\frac{1}{D(L)-D(L-\alpha(i_0))}}$ is the final height estimate, \begin{equation}\label{eq:finalHeightEst}\eta_{i_0+1}\sim h(\mathcal{T}_s),\end{equation}
by Lemma \ref{lem:heightEstProcedure}. We claim that that, for $1\leq j<m$, 
\begin{equation}\label{eq:claimedSymFuncBound}\left|\frac{y_j}{y_{L-\alpha(i_0)}}\right|^{\frac{1}{D(L-j)-D(L-\alpha(i_0))}}\ll h(\mathcal{T}_s).\end{equation}
 Assuming for now that \eqref{eq:claimedSymFuncBound} holds, also using the inequality \eqref{eq:heightEstCompRefHeight} from the statement of Lemma \ref{lem:heightEstProcedure}, we see that, for $1\leq j<m$, \[\left|S_{D(L-j)}(\mathcal{R})\right|=\left|\frac{y_j}{y_L}\right|\]
\[=\left|\frac{y_{\alpha(1)}}{y_{L}}\right|\ldots\left|\frac{y_{L-\alpha(i_0)}}{y_{L-\alpha(i_0-1)}}\right|\left|\frac{y_j}{y_{L-\alpha(i_0)}}\right|\]
\[\ll h(\mathcal{T}_1)^{D(\mathcal{T}_1)}\ldots  h(\mathcal{T}_{s-1})^{D(\mathcal{T}_{s-1})}h(\mathcal{T}_s)^{D(L-j)-D(L(s-1))},\]
which, after reindexing, is the desired error bound. To see this, we consider $1\leq j'<l_s$ such that $L-(L(s-1)+j')=l_s-j'<m$: we set $j=L-(L(s-1)+j')$, which ranges between $1$ and $m-1$, as in the proposition's statement.

To conclude, we prove our claimed inequality \eqref{eq:claimedSymFuncBound}. In the case that $L-\alpha(i_0)=m$, this is a direct consequence of \eqref{eq:lowerHeightEstsDontAppear} upon observing from Lemma \ref{lem:heightEstProcedure} that  $\eta_{i_0}\sim h(\mathcal{T}_s)$. In the case that $\alpha(i_0)<L-m$, we use \eqref{eq:lowerHeightEstsDontAppear2} and Lemma \ref{lem:heightEstProcedure}: if $i_0=0$, then there is one height estimate $\eta_1=\left|\frac{x}{y_L}\right|^{\frac{1}{k_L}}\geq\left|\frac{y_m}{y_L}\right|^{\frac{1}{D(L-m)}}\gg \left|\frac{y_j}{y_{L-\alpha(i_0)}}\right|^{\frac{1}{D(L-j)-D(L-\alpha(i_0))}}$, if $i_0\geq 1$, then $\eta_{i_0+1}\geq \left|\frac{y_m}{y_{L-\alpha(i_0)}}\right|^{\frac{1}{D(L-m)-D(\alpha(i_0))}}\gg \left|\frac{y_j}{y_{L-\alpha(i_0)}}\right|^{\frac{1}{D(L-j)-D(L-\alpha(i_0))}}$. Referring to  \eqref{eq:finalHeightEst}, the inequality follows.
\end{proof}

\begin{remark}
As for Proposition \ref{prop:tierRefined}, Lemma \ref{lem:heightEstProcedure} and the procedure it outlines can be used to obtain other refinements of the main structural result, Theorem \ref{thm:tierStruc}. For example, if we had that \[\left|\frac{y_{L-1}}{y_L}\right|^{\frac{1}{d(1)}}\gg \left|\frac{y_{L-2}}{y_{L-1}}\right|^{\frac{1}{d(2)}}\gg \ldots \gg \left|\frac{x}{y_1}\right|^{\frac{1}{d(L)}}>0,\]
then we would have $L$ tiers of roots which are separated and, for sufficiently small $\epsilon$ and some root $z\in \mathcal{R}$, $B(z,\epsilon|z|)$ contains only the root $z$. 
\end{remark}
\section{Rough factorisation}\label{sec:nearFact}
For notational reasons, we consider monic polynomials in this section:
\[\Psi(t)=\sum_{j=0}^Ly_jt^{k_j},\]
where $k_0=0$, $y_0\neq 0$, and $y_L=1$. In this section, we provide a rough factorisation of monic polynomials with a well separated tier structure. To this end, let us suppose that throughout this section we are in the regime indexed by $(l_1,l_2,\ldots,l_s)$, as in Definition \ref{def:tierDef}. Here there are $s$ tiers, $\mathcal{T}_1$, $\ldots$, $\mathcal{T}_s$, containing $D(\mathcal{T}_1)=|\mathcal{T}_1|$, $\ldots$, $D(\mathcal{T}_s)=|\mathcal{T}_s|$ roots, respectively. The tier regime may be roughly characterised by
\begin{equation}\label{eq:refHeightSep}h(\mathcal{T}_1)\gg h(\mathcal{T}_2)\gg\ldots \gg h(\mathcal{T}_s),\end{equation}
for some suitable choice of constants.

Recall the definition of the $k_j(\mathcal{T}_r)$ exponents for a given tier, Definition \ref{def:distIndicesK}. More explicitly, for $0\leq j\leq l_r$, we can write $k_j(\mathcal{T}_r)=k_{L-L(r)+j}-k_{L-L(r)}$.
Recall also the Definition \ref{def:distIndices} of the distinguished indices $D_j(\mathcal{T}_r)$, which we can write more explicitly as $D_j(\mathcal{T}_r)=D(L(r-1)+j)-D(L(r-1))=k_{L-L(r-1)}-k_{L-L(r-1)-j}$.

\begin{definition}\label{def:approxFactorisation}If we are in the regime given in Definition \ref{def:tierDef} indexed by $(l_1,l_2,\ldots,l_s)$, then, for $1\leq j \leq s$, we define the monic polynomial
\begin{equation}\label{eq:factorComp}\widetilde{\Psi}_j(t)\coloneqq\frac{1}{y_{L-L(j-1)}}\sum_{i=0}^{l_j}y_{L-L(j)+i}t^{k_i(\mathcal{T}_j)}.
\end{equation}
We define the  rough factorisation of $ \Psi(t)$ by
\begin{equation}
    \label{eq:almostFact}\widetilde{\Psi}(t)\coloneqq\prod_{j=1}^{s}\widetilde{\Psi}_j(t).
\end{equation}
We denote the roots of $\widetilde{\Psi}_j$ by $\widetilde{\mathcal{T}_j}$ and the roots of $\widetilde{\Psi}$ by $\widetilde{\mathcal{R}}$.
\end{definition}

Away from the zeros of the polynomial, the  rough factorisation, \eqref{eq:almostFact}, we seek should be quantifiably close to the original $ \Psi$. Furthermore, the root structure of $ \Psi$ and the root structure of its rough factorisation should be closely related. In particular, we have Theorem \ref{thm:approxFact}.

\begin{theorem}\label{thm:approxFact}
There exists a polynomial $\widetilde{\Psi}=\prod_{l=1}^s\widetilde{\Psi}_l(t)$, with $\widetilde{\Psi}_l(t)$ given by \eqref{eq:factorComp}, which roughly factorises $ \Psi$ in the following sense.

The polynomial $\widetilde{\Psi}_l$ has roots, $\widetilde{\mathcal{T}}_l$, which are all of comparable magnitude. Furthermore, for roots $w_j\in \widetilde{\mathcal{T}}_j$, we have that 
\[|w_1|\gg|w_2|\gg\ldots\gg|w_s|.\]

There exists a covering, $N(\mathcal{R})$, of the roots, $\mathcal{R}\subset \CC$, of $ \Psi$ which satisfies the following. Each connected component of $N(\mathcal{R})$, which we call a cell, is given by a ball. Each cell containing contains at most $L$ roots. For a cell $B$ containing exactly $m$ roots of $ \Psi$, $B$ contains exactly $m$ roots of $\widetilde{\Psi}$.

For $t\notin N(\mathcal{R})$,
\[| \Psi(t)-\widetilde{\Psi}(t)|\ll | \Psi(t)|.\] 
\end{theorem}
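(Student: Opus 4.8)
The plan is to build everything around the tier structure already established. First I would note that $\widetilde{\Psi}_l$, as defined in \eqref{eq:factorComp}, has the form of a rescaled, re-exponented version of the cluster of coefficients attached to the tier $\mathcal{T}_l$; so applying the implicit structure results (Lemma \ref{lem:symEqWithinTier} and Theorem \ref{thm:singleTierStruc}, or rather their input Lemma \ref{lem:cancComp}) to $\widetilde{\Psi}_l$ directly shows its roots $\widetilde{\mathcal{T}}_l$ all have magnitude $\sim h(\mathcal{T}_l)$. Since \eqref{eq:refHeightSep} gives $h(\mathcal{T}_1)\gg h(\mathcal{T}_2)\gg\ldots\gg h(\mathcal{T}_s)$, the separation $|w_1|\gg|w_2|\gg\ldots\gg|w_s|$ for $w_j\in\widetilde{\mathcal{T}}_j$ is immediate — provided the tier regime is taken with sufficiently strong separation constants, which we are free to demand.

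Next I would establish the key analytic estimate: for $t\notin N(\mathcal{R})$ (where $N(\mathcal{R})$ is a union of small balls around the roots of $\Psi$, to be specified), $|\Psi(t)-\widetilde{\Psi}(t)|\ll|\Psi(t)|$. The idea is a dyadic/annular decomposition of the $t$-plane according to which reference height band $|t|$ lies in. On the annulus where $|t|\sim h(\mathcal{T}_r)$, the product $\widetilde{\Psi}(t)=\prod_l\widetilde{\Psi}_l(t)$ has one "active" factor $\widetilde{\Psi}_r(t)$ whose size matches that of $\Psi(t)$ after factoring out the larger tiers, while the factors $\widetilde{\Psi}_l$ with $l<r$ contribute their top-degree monomials (of size $\sim\prod_{l<r}h(\mathcal{T}_l)^{D(\mathcal{T}_l)}\cdot$ a power of $|t|$) and the factors with $l>r$ contribute their constant terms. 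One then checks, term by term, that $\Psi(t)$ and $\widetilde{\Psi}(t)$ share the same "leading behaviour" in each band and that all discrepancies are lower-order by a factor controlled by the tier separation constants; this is exactly the content of the remark after the theorem statement, that the error constant can be made arbitrarily small by strengthening the separation. Between annuli (i.e. where $|t|$ is transitioning between two reference heights) one argues similarly, the relevant monomials still dominating. Crucially, this comparison can fail only near the roots of $\Psi$ itself, which is why we must excise $N(\mathcal{R})$.

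For the covering $N(\mathcal{R})$ and the root-count matching, I would invoke the structure theorem \ref{thm:tierStruc} (and its refinement inside each tier): the roots of $\Psi$ come in clusters, each cluster of diameter $\ll\epsilon|w|$ and with at most $L$ members, and distinct clusters strongly separated. Take $N(\mathcal{R})$ to be a union of balls, one around each cluster, of radius comfortably larger than the cluster diameter but still $\ll\epsilon|w|$ and with strong separation $d(B,B')\gg\max\{\operatorname{diam}B,\operatorname{diam}B'\}$ — this is possible precisely because of the strong separation in Theorem \ref{thm:tierStruc}. Then on $\partial N(\mathcal{R})$ we have $|\Psi(t)-\widetilde{\Psi}(t)|<|\Psi(t)|$, so Rouché's theorem (applied on the boundary of each cell $B$) gives that $\Psi$ and $\widetilde{\Psi}$ have the same number of zeros in $B$; since $\Psi$ has $m$ there (for some $1\le m\le L$), so does $\widetilde{\Psi}$. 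To be fully rigorous one should also check that $\widetilde{\Psi}$ has no zeros outside $N(\mathcal{R})$ — this follows from the same annular analysis, since off $N(\mathcal{R})$ both polynomials are bounded below, together with the fact that the total degrees of $\Psi$ and $\widetilde{\Psi}$ agree ($\sum_l\deg\widetilde{\Psi}_l=\sum_l D(\mathcal{T}_l)=k_L$), so counting zeros inside the cells accounts for all of them.

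The main obstacle I anticipate is the annular comparison estimate $|\Psi(t)-\widetilde{\Psi}(t)|\ll|\Psi(t)|$ uniformly off $N(\mathcal{R})$: one must carefully expand the product $\prod_l\widetilde{\Psi}_l(t)$, identify which monomial in each factor dominates on each band of $|t|$, match the resulting monomials against the monomials of $\Psi$, and bookkeep that every mismatch is suppressed by a tier-separation constant raised to a positive power — all while ensuring the lower bound on $|\Psi(t)|$ off $N(\mathcal{R})$ (which comes from the tier/cluster structure of its roots) is strong enough to absorb the error. Getting the quantifiers in the right order — first fix how small we want the constant in \eqref{eq:approxFactError}, then choose the separation constants in the tier regime accordingly, then choose the radii of the cells — is where the care lies.
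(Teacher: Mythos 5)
Your overall architecture matches the paper's: an annular, term-by-term comparison of $\prod_l\widetilde{\Psi}_l(t)$ against $\Psi(t)$ to bound the error $E(t)=\widetilde{\Psi}(t)-\Psi(t)$ (the paper's Lemma \ref{lem:factErr}), a lower bound on $|\Psi(t)|$ off a neighbourhood of the roots coming from the factorisation of $\Psi$ over its root tiers, and a per-cell zero count. Two points deserve comment.

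First, your use of Rouch\'e's theorem for the root-count matching is a genuinely different route. The paper deliberately avoids Rouch\'e (it says so in the introduction, with a view to non-Archimedean applicability) and instead argues by contradiction on the boundary of the doubled cell $B^*=B(u,2R)$: if $B$ held fewer roots of $\widetilde{\Psi}$ than of $\Psi$, then on $\partial B^*$ one gets an upper bound $|\Psi(t)|\lesssim \epsilon_m^m(\cdots)$ and a lower bound $|\widetilde{\Psi}(t)|\gg\epsilon_m^m(\cdots)$, forcing $|E(t)|$ to be too large. Your Rouch\'e argument is valid over $\CC$ and is arguably cleaner --- the boundary $\partial B$ of a cell lies in $N(\mathcal{R})^c$ by the openness of cells and their mutual separation, so the already-established bound $|E(t)|\ll|\Psi(t)|$ gives the strict inequality Rouch\'e needs --- but it buys this simplicity at the cost of being tied to complex analysis, whereas the paper's size-comparison argument uses only the metric structure.

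Second, there is a real gap in your construction of $N(\mathcal{R})$. Theorem \ref{thm:tierStruc} only tells you that $B(w,\epsilon|w|)$ contains at most $L$ roots; it does not hand you a partition of the roots into ``clusters'' with mutual separation $\gg$ their diameters, and writing ``this is possible precisely because of the strong separation in Theorem \ref{thm:tierStruc}'' conceals the work. Chains of roots, each close to the next, must be cut somewhere, and the radius at which you cut must be chosen so that (a) each connected component of the union of balls is itself a ball, and (b) distinct components are separated by much more than their radii. The paper achieves this with a recursive stopping-time construction (Lemma \ref{lem:cellCovering}) using a geometric hierarchy of scales $\epsilon_1\ll\epsilon_2\ll\ldots\ll\epsilon_{L+1}\ll\epsilon_c$ with $\epsilon_{j+1}=\epsilon_j^{1/L}$, the radius of a cell growing with the number of roots it absorbs; the bound of $L$ roots per $\epsilon_c$-ball is what guarantees termination. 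Moreover, the radii $\epsilon_b$ are not merely a convenience: the per-cell zero-count comparison needs the quantitative relation $\epsilon_c^{D(\mathcal{T}_r)-L(\mathcal{T}_r)}\epsilon_{m+1}^{L(\mathcal{T}_r)-(m-1)}\gg\epsilon_m$ (and $\epsilon_1^{L(\mathcal{T}_r)}\epsilon_c^{D(\mathcal{T}_r)-L(\mathcal{T}_r)}\gg\epsilon_f$ for the lower bound on $|\Psi|$), so the scales must be calibrated against the error parameter $\epsilon_f$ from the annular estimate. You correctly identify the quantifier ordering as the delicate point, but the multi-scale cell construction is the missing ingredient that makes it work.
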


One of the strongest similarities between our work and that of Hickman and Wright \cite{hickWriLP} is to be found in our proof of the rough factorisation theorem. In particular, both proofs follow a method of contradiction and taking suitable estimating the valuation of the respective polynomials. Hickman and Wright essentially consider the related root structures more general family of close polynomials, while here we consider only the rough factorisation and the original polynomial. 
 
The analysis in this section requires strong separation of the height estimates in the specification of the tier regime, Definition \ref{def:tierDef}, as we will see. It should be noted, however, that the results of Part \ref{part:ImpRootStruc} do not require such strong separation, although this is not something we specify in this paper. All of the results in this section should be understood as valid with respect to a tier regime specified with sufficiently strong separation in \eqref{eq:refHeightSep}. 

To begin with, let us bound the difference of $ \Psi(t)$ and $\widetilde{\Psi}(t)$. 
\begin{lemma}
\label{lem:factErr}With \begin{equation}\label{eq:factErrDef}E(t)\coloneqq \widetilde{\Psi}(t)-\Psi(t),\end{equation}
we have that, for each $1\leq r \leq s$ and $h(\mathcal{T}_{r+1})\ll|t|\lesssim h(\mathcal{T}_r)$, that
\begin{equation}\label{eq:factErrBnd}\left|E(t)\right|\leq \epsilon_f \left(\prod_{i=1}^r h(\mathcal{T}_i)^{D(\mathcal{T}_{i})}\right)|t|^{\sum_{j=r+1}^{s}D(\mathcal{T}_{j})},\end{equation}
including for $r=s$ and $r=1$, subject to the understanding that $h(\mathcal{T}_0)=\infty$ and $h(\mathcal{T}_{s+1})=0$. Here the constant $\epsilon_f$ can be taken arbitrarily small, provided we make a suitably strong choice of separation constants in the specification of the tier regime, \eqref{eq:refHeightSep}.
\end{lemma}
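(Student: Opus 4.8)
The plan is to make the approximate factorisation transparent by writing $\prod_{j}\widetilde\Psi_j$ in closed form, identifying exactly which monomials of the expansion reassemble $\Psi$, and showing that all the remaining monomials are genuinely negligible on each annulus.

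First I would record the elementary identity, immediate from Definition \ref{def:approxFactorisation} after the substitution $n=L-L(j)+i$, that
$\widetilde\Psi_j(t)=\dfrac{P_j(t)}{y_{L-L(j-1)}\,t^{k_{L-L(j)}}}$, where $P_j(t)=\sum_{n=L-L(j)}^{L-L(j-1)}y_n t^{k_n}$ is the block of $\Psi$ supported on the exponents of tier $\mathcal T_j$ (consecutive blocks overlapping in a single term). Multiplying and using $y_L=1$, $k_0=0$ gives $\widetilde\Psi(t)=\big(\prod_{m=1}^{s-1}y_{L-L(m)}\big)^{-1}t^{-\sum_{m=1}^{s-1}k_{L-L(m)}}\prod_{j=1}^s P_j(t)$. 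A monomial of $\prod_jP_j$ comes from choosing $y_{n_j}t^{k_{n_j}}$ from each $P_j$ with $L-L(j)\le n_j\le L-L(j-1)$, and after cancelling the prefactor its contribution to $\widetilde\Psi$ is $\prod_j m_j(t)$ with $m_j(t)=\frac{y_{n_j}}{y_{L-L(j-1)}}t^{k_{n_j}-k_{L-L(j)}}$. I would then verify the combinatorial fact that the ``staircase'' choices --- those admitting a pivot $p$ with $n_j=L-L(j)$ for $j<p$ and $n_j=L-L(j-1)$ for $j>p$ --- are precisely those whose contribution telescopes to a monomial $y_nt^{k_n}$ of $\Psi$, each occurring exactly once. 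Hence $E(t)=\widetilde\Psi(t)-\Psi(t)=\sum \prod_j m_j(t)$, the sum running over the (boundedly many) non-staircase choices.

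I expect the main obstacle to be the estimate of each non-staircase term on the annulus $h(\mathcal T_{r+1})\ll|t|\lesssim h(\mathcal T_r)$. The input is a coefficient bound: combining $S_{D(m)}(\mathcal R)=y_{L-m}$ (valid for monic $\Psi$) with the dominant-term comparisons \eqref{eq:heightCompMainTerm} and \eqref{eq:heightCompIntermediateTerm} yields $|y_{L-L(j-1)}|\sim\prod_{i<j}h(\mathcal T_i)^{D(\mathcal T_i)}$ and $|y_{n_j}|/|y_{L-L(j-1)}|\lesssim h(\mathcal T_j)^{\,k_{L-L(j-1)}-k_{n_j}}$, hence $|m_j(t)|\lesssim h(\mathcal T_j)^{\,k_{L-L(j-1)}-k_{n_j}}|t|^{\,k_{n_j}-k_{L-L(j)}}$. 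Using the ordering of the heights, this gives the uniform bound $|m_j(t)|\lesssim B_j$, where $B_j=h(\mathcal T_j)^{D(\mathcal T_j)}$ for $j\le r$ and $B_j=|t|^{D(\mathcal T_j)}$ for $j>r$, and $\prod_j B_j$ is exactly the right-hand side of \eqref{eq:factErrBnd}. Moreover it gives a \emph{penalised} bound: if $j<r$ and $n_j\neq L-L(j)$ then $|m_j(t)|\lesssim (|t|/h(\mathcal T_j))B_j$, and if $j>r$ and $n_j\neq L-L(j-1)$ then $|m_j(t)|\lesssim (h(\mathcal T_j)/|t|)B_j$; on the annulus both prefactors are $\ll 1$, indeed $\lesssim\max_i h(\mathcal T_{i+1})/h(\mathcal T_i)$.

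Finally I would note that a non-staircase choice is always penalised: unwinding the pivot definition, if $(n_1,\dots,n_s)$ is not a staircase there are indices $j_1<j_2$ with $n_{j_1}\neq L-L(j_1)$ and $n_{j_2}\neq L-L(j_2-1)$, and since $j_1<j_2$ either $j_1<r$ or $j_2>r$, so at least one of the two penalised estimates applies. Therefore $\prod_j|m_j(t)|\lesssim\big(\max_i h(\mathcal T_{i+1})/h(\mathcal T_i)\big)\prod_j B_j$; summing over the at most $\prod_j(l_j+1)$ non-staircase choices and absorbing this count and the implied constants (all depending only on $k_1,\dots,k_L$) into a single $\epsilon_f$, one obtains $|E(t)|\lesssim\epsilon_f\big(\prod_{i=1}^r h(\mathcal T_i)^{D(\mathcal T_i)}\big)|t|^{\sum_{j=r+1}^sD(\mathcal T_j)}$ with $\epsilon_f$ controlled by $\max_i h(\mathcal T_{i+1})/h(\mathcal T_i)$, hence arbitrarily small once the separation constants in \eqref{eq:refHeightSep} are taken strong enough. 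The cases $r=1$ and $r=s$ (with $h(\mathcal T_{s+1})=0$) are covered by the same argument, the latter using that the constant term of $\widetilde\Psi_s$ dominates on $0<|t|\lesssim h(\mathcal T_s)$.
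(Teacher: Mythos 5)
Your proposal is correct and follows essentially the same route as the paper: expand $\prod_j\widetilde\Psi_j$, identify the ``staircase'' cross-terms that telescope to the monomials of $\Psi$ (each exactly once), and bound every remaining cross-term by penalising one factor --- at the first index with $n_{j}\neq L-L(j)$ when it lies below $r$, or at the first subsequent index with $n_{j}\neq L-L(j-1)$ when it lies above $r$ --- using the coefficient bound $|y_{L-L(j)+i}/y_{L-L(j-1)}|\lesssim h(\mathcal T_j)^{D_{l_j-i}(\mathcal T_j)}$ supplied by Lemma \ref{lem:heightEstProcedure}. The only cosmetic difference is that the paper indexes the cross-terms by $(i_1,\ldots,i_s)$ and phrases the dichotomy as $a<r$ versus $r\le a<s$, which is logically equivalent to your ``$j_1<r$ or $j_2>r$'' split.
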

\begin{proof}
For $1\leq i \leq l_j$, we wish to estimate \[\left|\frac{y_{L-L(j)+i}}{y_{L-L(j-1)}}\right|.\] As a consequence of the height estimation lemma, Lemma \ref{lem:heightEstProcedure}, we have that
\begin{equation}\label{eq:quotientBndForTierCoeffs}\left|\frac{y_{L-L(j)+i}}{y_{L-L(j-1)}}\right|\lesssim h(\mathcal{T}_j)^{D_{l_j-i}(\mathcal{T}_j)}.\end{equation}
We use this to bound the error term. 

We can write \[\widetilde{\Psi}(t)=\prod_{j=1}^s\frac{1}{y_{L-L(j-1)}}\left(\sum_{i_j=0}^{l_j}y_{L-L(j)+i_j}t^{k_{i_j}(\mathcal{T}_s)}\right).\]
To avoid a proliferation of nested sub and super-scripts, we will sometimes write $i(j)$ and $l(j)$ in place of $i_j$ and $l_j$, respectively. If we expand the above product expression for $\widetilde{\Psi}$, we obtain a sum that we will refer to throughout this proof. Each term in the resulting sum can be indexed by $(i_1,i_2,\ldots,i_s)$, where the index $i_j$ ranges over $\lbrace 0 ,1, \ldots, l_j\rbrace$ for each $1\leq j\leq s$. 

We first consider those terms which sum to $\Psi$. For $1\leq a\leq s$ one can see that the term indexed by $(0,0,\ldots,i_a,l_{a+1},l_{a+2},\ldots,l_{s})$, with $0\leq i_a<l_a$ is exactly \[\left(\prod_{j=1}^{a-1}\frac{y_{L-L(j)}}{y_{L-L(j-1)}}\right)\left(\frac{1}{y_{L-L(a-1)}}y_{L-L(a)+i(a)}t^{k_{i(a)}(\mathcal{T}_a)}\right)\left(\prod_{i=a+1}^{s}t^{k_{l(i)}(\mathcal{T}_i)}\right)\]
\[=y_{L-L(a)+i(a)}t^{k_{L-L(a)+i(a)}}.\]
As an example, corresponding to $a=s$, we have that the term indexed by $(0,0,\ldots,0,i_s)$ is
\[\left(\prod_{j=1}^{s-1}\frac{y_{L-L(j)}}{y_{L-L(j-1)}}\right)\left(\frac{1}{y_{L-L(s-1)}}y_{L-L(s)+i(s)}t^{k_{i(s)}(\mathcal{T}_s)}\right)\]
\[=y_{i(s)}t^{k_{i(s)}}.\]
We also have that the term indexed by $(l_1,l_2,\ldots,l_s)$ is 
\[\left(\prod_{i=1}^{s}t^{k_{l(i)}(\mathcal{T}_i)}\right)=t^{k_L}.\]
In this way, we have uniquely indexed all of the terms appearing in the expansion of $ \Psi(t)$. The remaining terms are exactly those which sum to $E(t)=\widetilde{\Psi}(t)- \Psi(t)$. Before proceeding to bound $E$, let us consider the size of the terms we have just indexed. This will inform us as to the bounds we should shoot for on the error term. For the term indexed by $(0,0,\ldots,i_a,l_{a+1},l_{a+2},\ldots,l_{s})$, we see using \eqref{eq:quotientBndForTierCoeffs} that it is bounded in magnitude
\[\lesssim\left(\prod_{j=1}^{a-1}h(\mathcal{T}_j)^{D_{l_j}(\mathcal{T}_j)}\right)\left(h(\mathcal{T}_a)^{D_{l(a)-i(a)}(\mathcal{T}_a)}|t|^{k_{i(a)}(\mathcal{T}_a)}\right)\left(\prod_{j=a+1}^{s}|t|^{k_{l(j)}(\mathcal{T}_j)}\right).\]
Now, if we are considering $h(\mathcal{T}_{r+1})\ll|t|\lesssim h(\mathcal{T}_r)$, for $1\leq a \leq L$, these terms can be uniformly bounded  
\[\lesssim\left(\prod_{j=1}^{r}h(\mathcal{T}_j)^{D_{l_j}(\mathcal{T}_j)}\right)\left(\prod_{j=r+1}^{s}|t|^{k_{l(j)}(\mathcal{T}_j)}\right)=\left(\prod_{j=1}^{r}h(\mathcal{T}_j)^{D(\mathcal{T}_j)}\right)|t|^{\sum_{j=r+1}^sD(\mathcal{T}_j)}.\]

In what follows, we consider those terms that we did not specify as summing to $\Psi$ above. We refer to these as the remainder terms. Let us fix $1\leq r \leq s$ and consider \begin{equation}\label{eq:tBetweenRandRplus1}h(\mathcal{T}_{r+1})\ll|t|\lesssim h(\mathcal{T}_r).\end{equation} 
As above, for the term indexed by $(i_1,i_2,\ldots,i_s)$, we can estimate the size of each of the $s$ factors using \eqref{eq:quotientBndForTierCoeffs} and \eqref{eq:tBetweenRandRplus1}. For $1\leq j \leq r$,
\begin{equation}\label{eq:boundFactorjleqr}\left|\frac{1}{y_{L-L(j-1)}}y_{L-L(j)+i(j)}t^{k_{i(j)}(\mathcal{T}_{j})}\right|\lesssim h(\mathcal{T}_{j})^{D(\mathcal{T}_{j})}.\end{equation}
For $r<j\leq s$,
\begin{equation}\label{eq:boundFactorjgtrr}\left|\frac{1}{y_{L-L(j-1)}}y_{L-L(j)+i(j)}t^{k_{i(j)}(\mathcal{T}_{j})}\right|\lesssim h(\mathcal{T}_{r})^{D(\mathcal{T}_{j})}.\end{equation}
To bound $E$ as an error term, we require stronger control on the remainder terms. We observe that those terms summing to $E$ are indexed by $(i_1,i_2,\ldots,i_s)$, such that if $a$ is the smallest index such that $i_{a}\neq 0$ (or $a=0$ if no such index $i_j$ exists), then there exists a minimal $a'>a$ for which $i_{a'}<l_{a'}$. The corresponding term is
\[\left(\prod_{j=1}^{a-1}\frac{y_{L-L(j)}}{y_{L-L(j-1)}}\right)\left(\frac{1}{y_{L-L(a-1)}}y_{L-L(a)+i(a)}t^{k_{i(a)}(\mathcal{T}_{a})}\right)\left(\prod_{j=a+1}^s\frac{1}{y_{L-L(j-1)}}y_{L-L(j)+i(j)}t^{k_{i(j)}(\mathcal{T}_j)}\right).\]
For each of these remainder terms, one of the factors appearing in the above expression will allow us to establish the error bound. Let us now fix some remainder term and its corresponding index $(i_1,\ldots,i_s)$. We split our analysis according to whether $r\leq a< s$ or $1\leq a< r$.

If $r\leq a<s$, then, using \eqref{eq:quotientBndForTierCoeffs} and the fact that $|t|\gg h(\mathcal{T}_{r+1})\geq h(\mathcal{T}_{a'})$, we can strongly bound the factor indexed by $i_{a'}$
\[\left|\frac{1}{y_{L-L(a'-1)}}y_{L-L(a')+i(a')}t^{k_{i(a')}(\mathcal{T}_{a'})}\right|\lesssim h(\mathcal{T}_{a'})^{D_{l(a')-i(a')}(\mathcal{T}_{a'})}
t^{k_{i(a')}(\mathcal{T}_{a'})}\]
\[\ll |t|^{k_{l(a')}}=|t|^{D(\mathcal{T}_{a'})},\]
since $D_{l(a')-i(a')}>0$. 
Putting this together with \eqref{eq:boundFactorjleqr} and \eqref{eq:boundFactorjgtrr}, we can bound the magnitude of the remainder term indexed by $(i_1,\ldots,i_s)$
\[\ll\left(\prod_{j=1}^{r}h(\mathcal{T}_j)^{D_{l_j}(\mathcal{T}_j)}\right)|t|^{D(\mathcal{T}_{a'})}\left|\prod_{j=r+1,j\neq a'}^{s}\frac{1}{y_{L-L(j-1)}}y_{L-L(j)+i(j)}t^{k_{i(j)}(\mathcal{T}_j)}\right|\]
\[\lesssim\left(\prod_{j=1}^{r}h(\mathcal{T}_j)^{D_{l_j}(\mathcal{T}_j)}\right)\left|\prod_{j=r+1}^{s}|t|^{D(\mathcal{T}_{j})}\right|.\]

If $1\leq a< r$, then we can strongly bound the factor indexed by $i_a$
\[\left|\frac{1}{y_{L-L(a-1)}}y_{L-L(a)+i(a)}t^{k_{i(a)}(\mathcal{T}_{a})}\right|\ll h(\mathcal{T}_a)^{k_{l(a)}}=h(\mathcal{T}_a)^{D(\mathcal{T}_a)},\]
since $|t|\lesssim h(\mathcal{T}_r)\ll h(\mathcal{T}_a)$ and $k_{i(a)}(\mathcal{T}_{a})> 0$. Therefore, also using \eqref{eq:boundFactorjleqr} and \eqref{eq:boundFactorjgtrr}, we can bound the remainder term indexed by $(i_1,\ldots,i_s)$
\[\ll\left(\prod_{j=1,j\neq a}^{r}h(\mathcal{T}_j)^{D_{l_j}(\mathcal{T}_j)}\right)\left(h(\mathcal{T}_a)^{D(\mathcal{T}_a)}\right)\left(\prod_{j=r+1}^s|t|^{D_{l_j}(\mathcal{T}_j)}\right).\]
Summing the bounds on each of the remainder terms gives the desired estimate.
\end{proof}

For a given tier, $\mathcal{T}_r$, there are two important scale parameters appearing in the previous analysis. Provided the separation in \eqref{eq:refHeightSep} is strong enough, we can set our fine parameter $\epsilon_f$ in Lemma \ref{lem:factErr} as small as we like. The other important parameter appearing in our analysis is the coarse scale parameter $\epsilon_c$, which we now define. We choose $\epsilon_c>0$ so that, according with Theorem \ref{thm:tierStrucGivenRegime}, \emph{at most} $L(\mathcal{T}_r)$ roots from $\mathcal{T}_r$ can appear in the ball $B(w,3\epsilon_c h(\mathcal{T}_r))$ for any choice of $w\in\mathcal{T}_r$. 

Before giving the proof of our main result, we require a covering lemma. This covering lemma essentially provides a partition of the roots into clusters of size up to $L$, with strong separation between distinct clusters. In place of clusters, which are finite collection of roots, we use cells, which are suitable open balls containing these roots. The proof gives a recursive construction of these cells. Associated with this construction are a well separated sequence of parameters, $\epsilon_f\ll \epsilon_1\ll \epsilon_2\ll \ldots\ll \epsilon_L\ll   \epsilon_{L+1}\ll\epsilon_c$, which we now define.

\begin{definition}
\label{def:SeqErrParameters}We set $\epsilon_1=\epsilon_f^{\frac{1}{L}}$ and, for $1\leq j \leq L$, we set $\epsilon_{j+1}=\epsilon_j^{\frac{1}{L}}$. 
\end{definition}
\begin{remark}
We can achieve strong separation of the parameters $\epsilon_j$ provided we start from a suitable fine error parameter, $\epsilon_f$. Indeed, to have that $\epsilon_{j}\ll \epsilon_{j+1}$ and $\epsilon_{L}\ll \epsilon_c$, the two things we require are that \[\epsilon_{j}/\epsilon_{j+1}=\epsilon_j^{1-\frac{1}{L}}=\epsilon_1^{\frac{L-1}{L^{j}}}\ll 1\]
and
\[\epsilon_{L+1}=\epsilon_1^{\frac{1}{L^L}}\ll\epsilon_c.\]
This is possible provided we can take $\epsilon_f$ sufficiently small, which is something we can achieve if we specify the tier regime with strong separation of the reference heights.
\end{remark}

\begin{definition}
For points $w_1,\ldots,w_a\in \CC$, we denote by $A(w_1,\ldots,w_a)$ their arithmetic mean:
\[A(w_1,\ldots,w_a)\coloneqq \frac{1}{a}\sum_{i=1}^{a}w_i.\]
\end{definition}
We can now state our root cell covering lemma.

\begin{lemma}\label{lem:cellCovering}
There exists a covering, $N(\mathcal{R})$, of the roots, $\mathcal{R}$, of $\Psi$ which satisfies the following.

Each connected component of $N(\mathcal{R})$, which we call a cell, $B$, contains only roots from one tier. Furthermore, if a cell $B$ contains only the roots $w_1,\ldots,w_b$ in the tier $\mathcal{T}_r$, then $B=B(A(w_1,\ldots,w_b),\epsilon_b h(\mathcal{T}_r))$. Each cell can contain at most $L$ roots. 

For distinct cells $B=B(A(w_1,\ldots,w_b),\epsilon_b h(\mathcal{T}_r))$ and $B'=B(A(w_1',\ldots,w_{b'}'),\epsilon_{b'} h(\mathcal{T}_r))$ with $b\geq b'$,
\[d(B,B')\geq \frac{1}{4}\epsilon_{b+1} h(\mathcal{T}_r).\]
In particular, for any root $w'$ outwith the cell $B=B(A(w_1,\ldots,w_b),\epsilon_b h(\mathcal{T}_r))$ and any root $w\in B$, 
\[|w-w'|\gtrsim \epsilon_{b+1} h(\mathcal{T}_r).\]
\end{lemma}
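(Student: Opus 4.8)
The plan is to construct the cells recursively, processing tiers one at a time and, within a tier, processing roots by "merging" them into cells at successively larger scales $\epsilon_1 \ll \epsilon_2 \ll \ldots$. Fix a tier $\mathcal{T}_r$ with reference height $h = h(\mathcal{T}_r)$ and consider its $D(\mathcal{T}_r)$ roots. I would begin all roots as singleton candidate cells $B(w,\epsilon_1 h)$ and then run a clustering procedure: whenever two current candidate cells $B(A(\mathpzc{h}),\epsilon_a h)$ and $B(A(\mathpzc{h}'),\epsilon_{a'} h)$ (with $a \geq a'$) are \emph{not} separated by at least $\tfrac14 \epsilon_{a+1} h$, replace them by the single cell $B(A(\mathpzc{h}\cup\mathpzc{h}'),\epsilon_{|\mathpzc{h}|+|\mathpzc{h}'|} h)$ centred at the arithmetic mean of \emph{all} the roots involved. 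The key point is that two unseparated cells must be genuinely close — in particular all the roots they contain lie in a ball of radius $\lesssim \epsilon_{a+1} h \ll \epsilon_c h$ — so by the coarse-scale choice of $\epsilon_c$ (which guarantees at most $L(\mathcal{T}_r) \leq L$ roots of $\mathcal{T}_r$ in any ball $B(w,3\epsilon_c h)$), a merged cell never accumulates more than $L$ roots, and hence its scale index never exceeds $\epsilon_L$, so the procedure terminates. Cells from distinct tiers are automatically disjoint and well separated because $h(\mathcal{T}_r) \gg h(\mathcal{T}_{r+1})$ and all cell radii in $\mathcal{T}_r$ are $\leq \epsilon_L h(\mathcal{T}_r) \ll h(\mathcal{T}_r)$, while roots in $\mathcal{T}_r$ have modulus $\sim h(\mathcal{T}_r)$.

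The separation claim is then essentially the stopping condition of the procedure, but it requires a short argument that merging does not destroy previously-established separation. Here I would argue as follows: suppose at the end we have two distinct cells $B = B(A(w_1,\ldots,w_b),\epsilon_b h)$ and $B' = B(A(w_1',\ldots,w_{b'}'),\epsilon_{b'} h)$ with $b \geq b'$. If they were closer than $\tfrac14 \epsilon_{b+1} h$ they would have been merged, so it suffices to check that no later merge can bring two already-separated cells back together. The centres move only by amounts controlled by the \emph{smaller} cell's diameter: when a cell of scale $\epsilon_a h$ absorbs a cell of scale $\leq \epsilon_a h$, its centre (the arithmetic mean) shifts by $\lesssim \epsilon_a h$, which is $\ll \epsilon_{a+1} h$; since separations are recorded at the scale $\epsilon_{a+1} h$ of the \emph{larger} index present, and the separation gaps are geometrically increasing in the $\epsilon_j$, the accumulated centre drift across all later merges is dominated by the separation gap. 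This is where the strong separation $\epsilon_f \ll \epsilon_1 \ll \cdots \ll \epsilon_L \ll \epsilon_c$ from Definition \ref{def:SeqErrParameters} is used crucially: the sum of all smaller error scales is still much smaller than the next larger one. The final "in particular" statement — that a root $w'$ outside $B = B(A(w_1,\ldots,w_b),\epsilon_b h)$ satisfies $|w-w'| \gtrsim \epsilon_{b+1} h$ for $w \in B$ — follows since $w'$ lies in some other cell $B'$ of the same tier (or a different tier, where the gap is even larger), and $d(B,B') \geq \tfrac14 \epsilon_{\max\{b,b'\}+1} h \geq \tfrac14 \epsilon_{b+1} h$ when... wait, one must be careful about the case $b' > b$; then $d(B,B') \geq \tfrac14 \epsilon_{b'+1} h \geq \tfrac14\epsilon_{b+1}h$ still, since $\epsilon_{b'+1} \geq \epsilon_{b+1}$, so in all cases $|w - w'| \geq d(B,B') \geq \tfrac14 \epsilon_{b+1} h$.

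The main obstacle I anticipate is making the merging procedure genuinely well-defined and confluent: a priori the order in which one merges unseparated pairs could matter, and one must verify that the procedure halts with a canonical partition and that the stated radius/centre formula $B = B(A(w_1,\ldots,w_b),\epsilon_b h)$ holds for the \emph{final} cells regardless of merge order. I would handle this by phrasing the construction scale-by-scale rather than pair-by-pair: first form the equivalence classes generated by "$|w - w'| < \tfrac14 \epsilon_2 h$" to get the $2$-root-or-more cells at scale $\epsilon_2$, reassign all such roots to mean-centred balls of the appropriate radius, then iterate the same step with threshold $\tfrac14 \epsilon_3 h$ on the current cell centres, and so on. At each stage one invokes the coarse-scale bound to cap cell sizes at $L$, and the geometric separation of the $\epsilon_j$ to ensure that once two roots land in different cells at one stage they stay in different cells at all later stages. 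Everything else — disjointness across tiers, the radius being $\ll \epsilon_c h(\mathcal{T}_r)$, and each root belonging to a unique cell — is then immediate from the construction.
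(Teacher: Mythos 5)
Your bottom-up merging construction is a genuinely different route from the paper's. The paper grows a ``terminal ball'' separately from \emph{each} root $w_1$ (repeatedly adjoining any root within $\epsilon_{b+1}h$ of the current arithmetic mean and enlarging the radius accordingly), takes the union over all starting roots, and then proves a nesting property: if two terminal balls meet, the one with fewer roots is contained in the other, so each connected component is a single terminal ball; the separation estimate is then obtained by contradiction with terminality. You instead start from singletons and merge unseparated pairs. Both arguments rest on the same two ingredients --- the coarse parameter $\epsilon_c$ together with Theorem \ref{thm:tierStrucGivenRegime} to cap cell sizes at $L$, and the geometric separation of the $\epsilon_j$ to keep each root inside its cell --- and your version has the advantage that the final separation is literally the stopping condition, whereas the paper must argue it separately. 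What the paper's version buys is canonicity for free, which is exactly the point you worry about.

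However, your proposed resolution of the well-definedness worry is flawed, and the worry itself is unnecessary. The scale-by-scale variant with a stage-uniform threshold $\tfrac14\epsilon_{j+1}h$ at stage $j$ over-merges: two singletons at distance, say, $\tfrac15\epsilon_5 h$ already satisfy the separation the lemma demands of two one-root cells (namely $\tfrac14\epsilon_2 h$), yet your stage-$4$ pass would merge them into a cell that the lemma's statement forces to be $B(A(w_1,w_2),\epsilon_2 h)$ --- a ball of radius $\epsilon_2 h\ll \tfrac1{10}\epsilon_5 h$ containing neither root, so $N(\mathcal{R})$ would no longer cover $\mathcal{R}$. The merge threshold must depend on the sizes of the two cells being compared, as in your first, pair-by-pair description. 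Once you keep the size-dependent thresholds, confluence is irrelevant for an existence statement: any order of merging terminates, since each merge strictly decreases the number of cells, and whatever terminal configuration is reached satisfies all the claimed properties --- the separation because no further merge is possible, and the size bound $\leq L$ and the containment of each root in its cell by the invariants you already describe (the latter does need the quantitative bookkeeping the paper records as $|w_a-A(w_1,\ldots,w_b)|\leq\tfrac{b-1}{b}\epsilon_b h$, but this is routine). The ``centre drift'' paragraph can likewise be deleted: only the final configuration matters.
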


Figure \ref{fig:rootCellSketch} is a sketch of $4$ nearby root cells from a root cell covering. Roots are marked with a cross. Note that the larger cells are at a larger distance from adjacent cells.

\begin{figure}[h]
\includegraphics[width=0.9\textwidth]{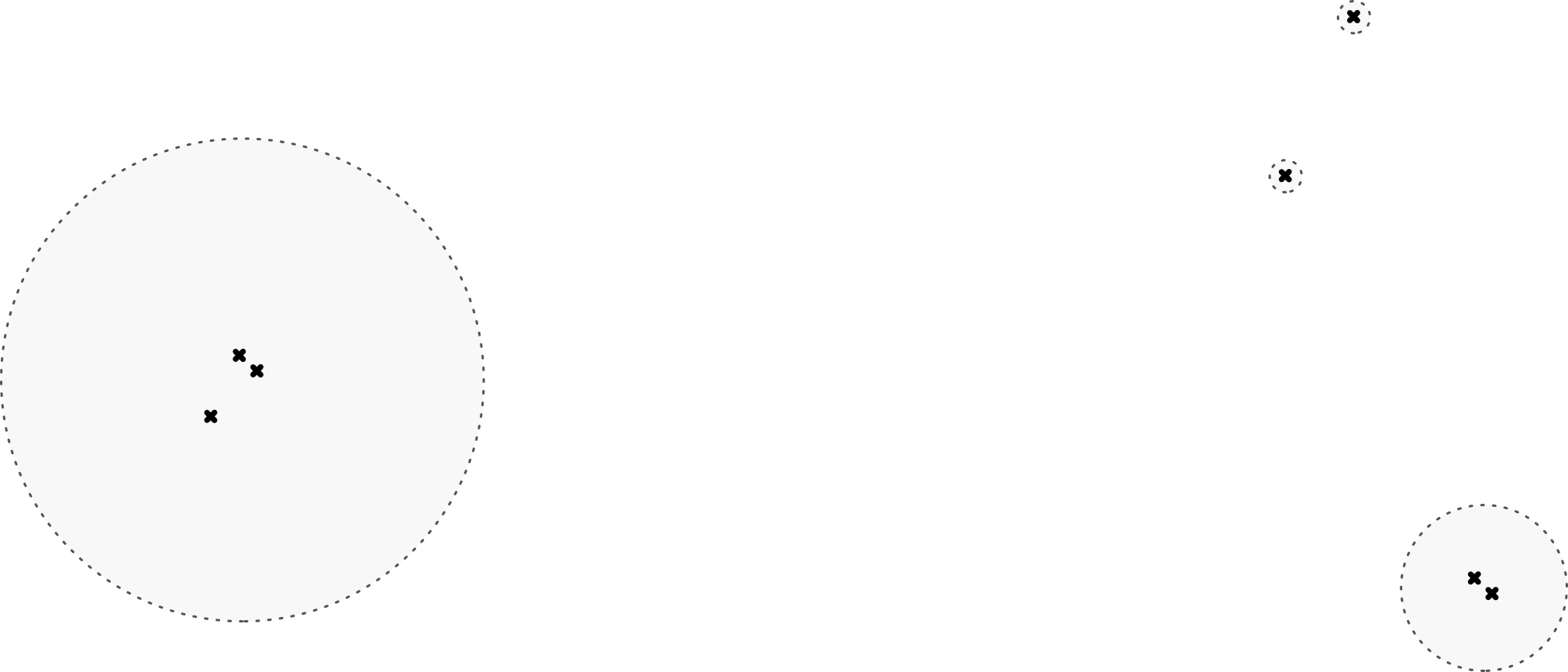}
\centering
\caption{Four cells from a root cell covering.}\label{fig:rootCellSketch}
\end{figure} 

\begin{proof}
Let us carry out the construction in each tier separately. Let $\mathcal{T}=\mathcal{T}_r$ for some $r$ and let $h=h(\mathcal{T}_r)$ denote the corresponding reference height. 

The construction is first outlined with reference to a particular choice of $w_1\in \mathcal{T}$. For each $w_1\in \mathcal{T}$, we construct an appropriate ball $N(w_1)$ containing $w_1$ and a number of other roots in $\mathcal{T}$. The root cell covering $N(\mathcal{R})$ in the lemma statement is then given as the union of the balls $N(w_1)$.

For now, let us fix some $w_1$. If for all remaining $w'\in\mathcal{T}$, $|w_1-w'|\geq  \epsilon_2 h$, then we set $N(w_1)=B(w_1,\epsilon_1 h)$.

Continuing the construction, it remains to consider the case where there exists  $w_2$ with $|w_1-w_2|< \epsilon_2 h$. Fix some choice of such $w_2$. We then divide our analysis with reference to $A\left( w_1,w_2\right)=\frac{w_1+w_2}{2}$. If, for all remaining $w'\in \mathcal{R}$, we have that $|w'-A\left( w_1,w_2\right)|\geq \epsilon_3h$, then we set $N(w_1)=B(A\left( w_1,w_2\right),\epsilon_2h)$. 
Otherwise, distinct from $w_1$ and $w_2$, there exists $w_3$ such that $|w_3-A\left( w_1,w_2\right)|\leq \epsilon_3h$ and we continue as previously, fixing some choice of $w_3$ and then working with reference to $A\left( w_1,w_2,w_3\right)$ and the scale $\epsilon_{4}h$.

The procedure continues; we consider the distance of remaining roots to the average of the roots already picked up by our procedure. Since there are finitely many roots, we know that the construction will terminate and we will refer to the resulting balls as terminal. For each choice of $w_1$, we construct a terminal ball $N(w_1)=B(A\left( w_1,w_2,\ldots,w_b\right), \epsilon_b h)$. By definition, the terminal ball $N(w_1)$ is constructed so that, for $w'\in\mathcal{R}$ with $w'\notin N(w_1)$, $\left|w'-A\left( w_1,w_2,\ldots,w_b\right)\right|\geq \epsilon_{b+1}h$. 

For each $w_1\in\mathcal{T}$, we can construct a ball $N(w_1)$ following the above procedure. The root cell covering $N(\mathcal{R})$ is simply given as the union of the sets $N(w_1)$. It remains to show that each connected component of $N(\mathcal{R})$ is given by a ball containing at most $L(\mathcal{T})$ roots, in particular that it is given by $N(w_1)$ for some $w_1$.

Let us first determine how close roots $w_j$ in a cell are to its centre. From the cell $B(A(w_1,\ldots,w_b),\epsilon_b h(\mathcal{T}_r))$, we take the root $w_a$. We see that \[|w_a-A(w_1,\ldots,w_b)|\leq|w_a-A(w_1,\ldots, w_{a}))|+\sum_{j=a+1}^{b}\left|A(w_1,\ldots, w_{j-1})-A(w_1,\ldots, w_{j})\right|\]
\[=|w_a-\frac{1}{a}\left((a-1)A(w_1,\ldots, w_{a-1})+w_a\right)|\]\[+\sum_{j=a+1}^{b}\frac{1}{j}\left|jA(w_1,\ldots, w_{j-1})-\left((j-1)A(w_1,\ldots, w_{j-1})+w_{j}\right)\right|\]
\[=\frac{a-1}{a}|w_a-A(w_1,\ldots, w_{a-1})|+\sum_{j=a+1}^{b}\frac{1}{j}\left|A(w_1,\ldots, w_{j-1})-w_{j}\right|\]
\[\leq\frac{a-1}{a}\epsilon_{a}h(\mathcal{T}_r)+\sum_{j=a+1}^{b}\frac{1}{j}\epsilon_jh(\mathcal{T}_r)\]
\begin{equation}\label{eq:dRootToCellCentre}\leq \frac{b-1}{b}\epsilon_bh(\mathcal{T}_r).\end{equation}

We can verify that the construction of the terminal balls takes at most $L(\mathcal{T})$ steps. Indeed, suppose that this were not the case and consider the step from $L(\mathcal{T})$ to $L(\mathcal{T})+1$. We then know we can find $L(\mathcal{T})+1$ roots $w_1\ldots w_{L(\mathcal{T})+1}$ contained in $B\left(A\left( w_1,w_2,\ldots,w_{L(\mathcal{T})+1}\right),\epsilon_{L(\mathcal{T})+1} h\right)$. This contradicts the regime specific structure theorem, Theorem \ref{thm:tierStrucGivenRegime}, since \[B\left(A\left( w_1,w_2,\ldots,w_{L(\mathcal{T})+1}\right),\epsilon_{L(\mathcal{T})+1} h\right)\subset B\left( w_1,3\epsilon_{L(\mathcal{T})+1} h\right) \subset B\left(w_1,\epsilon_c h\right)\] and $B\left(w_1,\epsilon_c h\right)$ contains at most $L(\mathcal{T})$ roots. 

We are now able to show that each cell of $N(\mathcal{R})$ is a ball. To this end, let us take two terminal balls $B=B(A(w_1,\ldots,w_b),\epsilon_b h(\mathcal{T}_r))$ and $B'=B(A(w_1',\ldots,w_{b'}'),\epsilon_{b'} h(\mathcal{T}_r))$ obtained by the construction outlined above. We suppose that these balls are such that \begin{equation}\label{eq:cellSetIntersect}B(A(w_1,\ldots,w_b),\epsilon_b h(\mathcal{T}_r))\cap B(A(w_1',\ldots,w_{b'}'),\epsilon_{b'} h(\mathcal{T}_r))\neq \emptyset.\end{equation}
Without loss of generality, suppose that $b'\leq b$. We find that, for any $w_{a'}\in \left\lbrace w_1',\ldots,w_{b'}'\right\rbrace$, \[|w_{a}'-A(w_1,\ldots,w_{b})|\]
\[\leq |w_{a}'-A(w_1',\ldots,w_{b'}')|+|A(w_1',\ldots,w_{b'}')-A(w_1,\ldots,w_{b})|\]
\[\leq \frac{b'-1}{b'}\epsilon_{b'}h(\mathcal{T}_r)+\epsilon_{b'}h(\mathcal{T}_r)+\epsilon_{b}h(\mathcal{T}_r)\]
\begin{equation}\label{eq:cellsIntersectOneSmaller}<\epsilon_{b+1}h(\mathcal{T}_r).\end{equation}
In particular, we must have that $w_{a}'\in \left\lbrace w_1,\ldots,w_{b}\right\rbrace$, because otherwise the construction we outlined above must continue to account for $w_{a'}$. Therefore $\left\lbrace w_1',\ldots,w_{b'}'\right\rbrace\subset \left\lbrace w_1,\ldots,w_{b}\right\rbrace$. It is then easy to verify that $B'\subset B$. This is obvious if $b=b'$. To show this when $b'<b$, let us take any complex number $w\in B'$. Relating $w$ to $B'$ and $B'$ to $w_{1}'$, which we know is an element of $B'$ and of $B$, we find that
\[d(w,A(w_1,\ldots,w_b))\]
\[\leq d(w,A(w_1',\ldots,w_{b'}'))+d(A(w_1',\ldots,w_{b'}'),w_1')+d(w_1',A(w_1,\ldots,w_b))\]
\[\leq \epsilon_{b'}h(\mathcal{T}_r)+\frac{b'-1}{b'}\epsilon_{b'}h(\mathcal{T}_r)+\frac{b-1}{b}\epsilon_{b}h(\mathcal{T}_r)\]
\[<\epsilon_{b}h(\mathcal{T}_r),\]
so that $w\in B$. Therefore, $B'\subset B$.

In fact, the above argument showing that cells are given by terminal balls can be strengthened. We can show that distinct cells are strongly separated. In particular, to conclude, we show that, for cells $B=B(A(w_1,\ldots,w_b),\epsilon_b h(\mathcal{T}_r))$ and $B'=B(A(w_1',\ldots,w_{b'}'),\epsilon_{b'} h(\mathcal{T}_r))$ with $b'\leq b$,
\[d(B,B')\gtrsim \epsilon_{b+1}h(\mathcal{T}_r).\]
Let us suppose, for a contradiction, that there exists a complex number
\begin{equation}\label{eq:fattenedCellSetIntersect}w\in B\left(A(w_1,\ldots,w_b),\frac{1}{3}\epsilon_{b+1} h(\mathcal{T}_r)\right)\cap B\left(A(w_1',\ldots,w_{b'}'),\frac{1}{3}\epsilon_{b'+1} h(\mathcal{T}_r)\right).\end{equation}
We find that
\[d(w_1',A(w_1,\ldots,w_b))\]
\[\leq d(w_1',A(w_1',\ldots,w_{b'}'))+d(A(w_1',\ldots,w_{b'}'),w)+d(w,A(w_1,\ldots,w_b))\]
\[\leq \frac{b'-1}{b'}\epsilon_{b'}h(\mathcal{T}_r)+\epsilon_{b'}h(\mathcal{T}_r)+\epsilon_{b}h(\mathcal{T}_r)\]
\[< \epsilon_{b+1}h(\mathcal{T}_r).\]
This inequality contradicts our assumption that $B=B(A(w_1,\ldots,w_b),\epsilon_b h(\mathcal{T}_r))$ was a terminal ball, since it implies that the construction should continue to account for the root $w_1'\notin \left\lbrace w_1,\ldots,w_{b}\right\rbrace$. Therefore \eqref{eq:fattenedCellSetIntersect} can not hold and, in particular, 
\[d(B,B')\geq \frac{1}{4}\epsilon_{b+1}h(\mathcal{T}_r).\]
\end{proof}

\begin{lemma}\label{lem:factorRootSize}
The roots, $\widetilde{\mathcal{T}_j}$, of the polynomial $\widetilde{\Psi}_j(t)$ are all comparable in magnitude to $h(\mathcal{T}_j)$.
\end{lemma}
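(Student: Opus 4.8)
The plan is to rescale $\widetilde{\Psi}_j$ to unit scale and read off the root sizes from the sizes of its coefficients. Recall from Definition \ref{def:approxFactorisation}, equation \eqref{eq:factorComp}, that $\widetilde{\Psi}_j(t)=\sum_{i=0}^{l_j}a_it^{k_i(\mathcal{T}_j)}$ with $a_i=y_{L-L(j)+i}/y_{L-L(j-1)}$; since $L-L(j)+l_j=L-L(j-1)$ we have $a_{l_j}=1$, so $\widetilde{\Psi}_j$ is monic of degree $k_{l_j}(\mathcal{T}_j)=D(\mathcal{T}_j)$. Writing $h=h(\mathcal{T}_j)$ (which is positive, as $y_0\neq 0$ means $\Psi$ has no zero root) and $N=D(\mathcal{T}_j)$, I would introduce the normalised, rescaled polynomial
\[Q_j(u)\coloneqq h^{-N}\widetilde{\Psi}_j(hu)=\sum_{i=0}^{l_j}a_ih^{k_i(\mathcal{T}_j)-N}u^{k_i(\mathcal{T}_j)},\]
which is again monic of degree $N$ and whose roots are precisely $\{w/h:w\in\widetilde{\mathcal{T}_j}\}$. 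The lemma is then equivalent to the assertion that every root of $Q_j$ has modulus comparable to $1$, with constants depending only on the fixed exponent set.

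For the upper bound I would estimate the coefficients of $Q_j$. For $1\leq i\leq l_j$ the height estimation Lemma \ref{lem:heightEstProcedure}, in the form recorded by \eqref{eq:quotientBndForTierCoeffs}, gives $|a_i|\lesssim h^{D_{l_j-i}(\mathcal{T}_j)}$, and since $\mathcal{K}(\mathcal{T}_j)=D(\mathcal{T}_j)-\mathcal{D}(\mathcal{T}_j)$ (Definition \ref{def:distIndicesK}) we have $k_i(\mathcal{T}_j)=N-D_{l_j-i}(\mathcal{T}_j)$; hence $|a_ih^{k_i(\mathcal{T}_j)-N}|\lesssim 1$. The $i=0$ coefficient is $a_0h^{-N}$, which is $\sim 1$ by the next paragraph. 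Thus $Q_j$ is a monic polynomial of degree $N$ all of whose coefficients are $O(1)$, and the elementary Cauchy-type bound on the roots of a polynomial in terms of its coefficients gives $|u|\lesssim 1$ for every root $u$ of $Q_j$; equivalently $|w|\lesssim h(\mathcal{T}_j)$ for every $w\in\widetilde{\mathcal{T}_j}$.

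For the matching lower bound I would pin down the constant term of $Q_j$, namely $a_0=y_{L-L(j)}/y_{L-L(j-1)}=S_{D(L(j))}(\mathcal{R})/S_{D(L(j-1))}(\mathcal{R})$. Applying \eqref{eq:heightCompMainTerm} to the numerator ($i=j$) and the denominator ($i=j-1$) gives $|S_{D(L(j))}(\mathcal{R})|\sim\prod_{i=1}^{j}h(\mathcal{T}_i)^{D(\mathcal{T}_i)}$ and $|S_{D(L(j-1))}(\mathcal{R})|\sim\prod_{i=1}^{j-1}h(\mathcal{T}_i)^{D(\mathcal{T}_i)}$, so $|a_0|\sim h(\mathcal{T}_j)^{N}$ and the constant term $a_0h^{-N}$ of $Q_j$ is bounded above and below by positive constants. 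Since $Q_j$ is monic of degree $N$, the modulus of its constant term equals $\prod_{w\in\widetilde{\mathcal{T}_j}}|w|/h(\mathcal{T}_j)$; this product is therefore $\sim 1$, while each of its $N$ factors is $\lesssim 1$ by the previous paragraph and $N\leq k_L$ is bounded. Isolating the factor of any fixed root $w_0$, the inequality $\prod_{w}|w|/h(\mathcal{T}_j)\leq (|w_0|/h(\mathcal{T}_j))\,C^{N-1}$ forces $|w_0|\gtrsim h(\mathcal{T}_j)$, and combining the two bounds gives $|w|\sim h(\mathcal{T}_j)$ for all $w\in\widetilde{\mathcal{T}_j}$.

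I do not expect a genuine obstacle here: the only point requiring attention is the index bookkeeping, keeping the coefficient index $L-L(j)+i$, the exponent $k_i(\mathcal{T}_j)$, and the power $D_{l_j-i}(\mathcal{T}_j)$ of $h$ aligned so that the powers of $h$ cancel after rescaling, together with the observation that $N=D(\mathcal{T}_j)\leq k_L$ is controlled by the fixed exponent data, which is precisely what converts the product estimate into an individual lower bound on each root. There is no delicate analysis beyond the coefficient comparisons already established.
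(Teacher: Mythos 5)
Your argument is correct, and it takes a different route from the paper. The paper's proof is a one-line appeal to the height estimation procedure, Lemma \ref{lem:heightEstProcedure}, applied to $\widetilde{\Psi}_j$ itself: the quotients of coefficients that the procedure produces for $\widetilde{\Psi}_j$ are literally the same expressions as those produced for the tier-$j$ portion of $\Psi$, so the procedure returns a single tier of height $\sim h(\mathcal{T}_j)$ and the conclusion follows. You instead avoid re-running that machinery on the factor polynomial: you rescale to $Q_j(u)=h^{-N}\widetilde{\Psi}_j(hu)$, check via \eqref{eq:quotientBndForTierCoeffs} and the index identity $k_i(\mathcal{T}_j)=D(\mathcal{T}_j)-D_{l_j-i}(\mathcal{T}_j)$ that $Q_j$ is monic with $O(1)$ coefficients (giving the upper bound on roots by the Cauchy bound), and via \eqref{eq:heightCompMainTerm} that its constant term is $\sim 1$ (giving the lower bound from the product of roots, since the degree $N\leq k_L$ is bounded). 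Both proofs rest on the same underlying coefficient comparisons, but yours is more self-contained and makes explicit what the paper's proof leaves implicit, namely why the procedure applied to $\widetilde{\Psi}_j$ yields exactly one tier; the cost is a little more index bookkeeping, which you carry out correctly.
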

\begin{proof}
This is a simple consequence of Lemma \ref{lem:heightEstProcedure}, which can be applied with reference to $\Psi$ or to $\widetilde{\Psi}_j$. In either case, we see exactly the same expressions appearing as height estimates and the lemma tells us that these are comparable to $h(\mathcal{T}_j)$.
\end{proof}

We are now ready to prove Theorem \ref{thm:approxFact}.
\begin{proof}
In the case that $s=1$, the factorisation is our initial polynomial and there is nothing to prove. Henceforth, we suppose that $s>1$.

We make use of Lemma \ref{lem:factorRootSize} and consider the factorisations of $ \Psi(t)$ and $\widetilde{\Psi}(t)$ in terms of their roots:
\begin{equation}\label{eq:PsiandTildePsiRootFactorisation} \Psi(t)=\prod_{j=1}^s\prod_{w\in\mathcal{T}_j}(t-w)\quad\text{and}\quad\widetilde{\Psi}(t)=\prod_{j=1}^{s}\widetilde{\Psi}_j(t)=\prod_{j=1}^{s}\prod_{\tilde{w}\in\widetilde{\mathcal{T}}_j}(t-\tilde{w}).\end{equation}

 Throughout, we appeal to the regime specific structure theorem, Theorem \ref{thm:tierStrucGivenRegime}, and the root cell covering lemma, Lemma \ref{lem:cellCovering}. Let us consider a specific $t\notin N(\mathcal{R})$ with $|t|\sim h(\mathcal{T}_r)$. By the definition of our covering in Lemma \ref{lem:cellCovering}, the closest $t$ can be to a root $w\in \mathcal{T}_r$ is $\epsilon_1h(\mathcal{T}_r)$ and, furthermore, we can check that there are at most $L(\mathcal{T}_r)$ roots $w$ with $\epsilon_1h(\mathcal{T}_r)<|t-w|<\epsilon_ch(\mathcal{T}_r)$. Indeed, if $w\in\mathcal{T}_r\cap B(t,\epsilon_c h( \mathcal{T}_r))$, then $B(t,\epsilon_c h( \mathcal{T}_r))\subset B(w,3\epsilon_c h( \mathcal{T}_r))$, which can contain at most $L(\mathcal{T}_r)$ roots, by Theorem \ref{thm:tierStrucGivenRegime} and our definition of $\epsilon_c$. The remaining roots in $\mathcal{T}_r$ are roots $w'\notin B(t,\epsilon_c h( \mathcal{T}_r))$ and we know there are at least $D(\mathcal{T}_r)-L(\mathcal{T}_r)$ of these. We thus find, using the factorisation \eqref{eq:PsiandTildePsiRootFactorisation}, that, for $t\notin N(\mathcal{R})$ with $|t|\sim h(\mathcal{T}_r)$, \[| \Psi(t)|\gtrsim \left(\epsilon_c^{D(\mathcal{T}_{r})-L(\mathcal{T}_r)}\epsilon_1^{L(\mathcal{T}_r)} h(\mathcal{T}_r)^{D(\mathcal{T}_{r})}\right)\prod_{i=r+1}^{s}h(\mathcal{T}_r)^{D(\mathcal{T}_i)}\prod_{i=1}^{r-1}h(\mathcal{T}_i)^{D(\mathcal{T}_i)}\] 
 \[\gg\epsilon_f h(\mathcal{T}_r)^{D(\mathcal{T}_{r})+D(\mathcal{T}_{r+1})+\ldots+D(\mathcal{T}_{s})}\prod_{i=1}^{r-1}h(\mathcal{T}_i)^{D(\mathcal{T}_i)},\]
provided $\epsilon_f$ has been taken small enough, since $\epsilon_1^{L(\mathcal{T}_r)}\epsilon_c^{D(\mathcal{T}_r)-L(\mathcal{T}_r)}= \epsilon_{f}^{\frac{L(\mathcal{T}_r)}{L}}\epsilon_c^{D(\mathcal{T}_r)-L(\mathcal{T}_r)}\gg \epsilon_f$. We now have an explicit lower estimate on the size of $ \Psi(t)$ for $t\notin N(\mathcal{R})$. It is now possible to make sense of Lemma \ref{lem:factErr} as an error expression. Indeed, for $|t|\sim h(\mathcal{T}_r)$, we see that
\begin{equation}\label{eq:errEPsiTildePsi}|E(t)|\leq \epsilon_f  h(\mathcal{T}_r)^{D(\mathcal{T}_{r})+D(\mathcal{T}_{r+1})+\ldots+D(\mathcal{T}_{s})}\prod_{i=1}^{r-1}h(\mathcal{T}_i)^{D(\mathcal{T}_i)},\end{equation} 
so that for $|t|\sim h(\mathcal{T}_r)$ with $t\notin N(\mathcal{R})$, 
\[|E(t)|\ll | \Psi(t)|.\] 
In particular, for $t\notin N(\mathcal{R})$, \[\widetilde{\Psi}(t)=
 \Psi(t)+E(t)\neq 0\]
so that, for the roots of $\widetilde{\Psi}$,
\[\widetilde{\mathcal{R}}\subset N(\mathcal{R}).\]

It remains for us to show that each cell of $N(\mathcal{R})$ contains the same number of roots of $\Psi$ and  $\widetilde{\Psi}$. If there was no error term and all roots were isolated, this would be easy as both functions would be equal to zero on $\mathcal{R}\subset N(\mathcal{R})$. In fact, the argument requires more precision. In order to account for the error term and cells containing multiple roots, we must consider the size of the functions $ \Psi$ and $\widetilde{\Psi}$ at a suitable distance from the roots $\mathcal{R}$. In particular, we estimate the size of the functions close to the boundary of $N(\mathcal{R})$. We know that $\mathcal{R}\subset N(\mathcal{R})$ and also that $\widetilde{\mathcal{R}}\subset N(\mathcal{R})$, which will allow us to estimate the size of the functions using their factorisations, \eqref{eq:PsiandTildePsiRootFactorisation}.

For the remainder of the proof, we fix a cell $B=B(u,R)$, containing $m$ roots, with centre $u$ and radius $R=\epsilon_mh(\mathcal{T}_r)$. We consider the size of the functions $ \Psi$ and $\widetilde{\Psi}$ at the boundary of $B^*$, where $B^*=B(u,2R)$ is the double of $B$. For $t\in \partial B^*$, using the factorisation \eqref{eq:PsiandTildePsiRootFactorisation} and the fact that $d(t, B)=\epsilon_mh(\mathcal{T}_r)$ with $B$ containing $m$ roots, 
\begin{equation}\label{eq:PsitUpper}| \Psi(t)|\lesssim\epsilon_m^m h(\mathcal{T}_r)^{D(\mathcal{T}_s)+\ldots+D(\mathcal{T}_r)}\prod_{i=1}^{r-1}h(\mathcal{T}_i)^{D(\mathcal{T}_i)}.\end{equation}

By the covering Lemma \ref{lem:cellCovering}, we can verify that points on $\partial B^*$ are well separated from cells other than $B$. Indeed, $d(\partial B^*, B)=R=\epsilon_mh(\mathcal{T}_r)$ and, if we take a distinct cell $B'$, then Lemma \ref{lem:cellCovering} tells us that $d(B,B')\geq\frac{1}{4}\epsilon_{m+1}h(\mathcal{T}_r)$ so that $d(\partial B^*, B')\gtrsim \epsilon_{m+1}h(\mathcal{T}_r)$. In particular, since $\widetilde{\mathcal{R}}\subset N(\mathcal{R})$, for $t\in \partial B^*$ and any root $\tilde{w}'\in \widetilde{\mathcal{T}}_r$ with $\tilde{w}'\notin B$,
\begin{equation}\label{eq:distRootsInOtherCells}|w'-t|\gtrsim \epsilon_{m+1}h(\mathcal{T}_r).\end{equation}
 We also note that, by Theorem \ref{thm:tierStrucGivenRegime}, for $t\in \partial B^*$, $B(t,\epsilon_ch(\mathcal{T}_r))$ can contain at most $L(\mathcal{T}_r)$ roots of $\widetilde{\Psi}$. Let us now suppose that the given cell, $B$, contains only $\tilde{m}<m$ roots of $\widetilde{\Psi}$. For $t\in \partial B^*\subset N(\mathcal{R})^c$, there can be at most $L(\mathcal{T}_r)-\tilde{m}$ roots $\tilde{w}'\notin B$ for which $\tilde{w}' \in B(t,\epsilon_c h(\mathcal{T}_r))$.  We know that, for $t\in \partial B^*$, $|t|\sim h(\mathcal{T}_r)$. Therefore, for $t\in \partial B^*$, using the factorisation \eqref{eq:PsiandTildePsiRootFactorisation} and also the distance estimate \eqref{eq:distRootsInOtherCells},
 \[\left|\widetilde{\Psi}(t)\right|\gtrsim \left(\epsilon_c^{D(\mathcal{T}_r)-L(\mathcal{T}_r)}\epsilon_m^{\tilde{m}}\epsilon_{m+1}^{L(\mathcal{T}_r)-\tilde{m}} h(\mathcal{T}_r)^{D(\mathcal{T}_r)}\right)h(\mathcal{T}_r)^{D(\mathcal{T}_{s})+\ldots+D(\mathcal{T}_{r+1})}\prod_{i=1}^{r-1}h(\mathcal{T}_i)^{D(\mathcal{T}_i)}\]
  \[\gtrsim \left(\epsilon_c^{D(\mathcal{T}_r)-L(\mathcal{T}_r)}\epsilon_m^{m-1}\epsilon_{m+1}^{L(\mathcal{T}_r)-(m-1)} h(\mathcal{T}_r)^{D(\mathcal{T}_r)}\right)h(\mathcal{T}_r)^{D(\mathcal{T}_{s})+\ldots+D(\mathcal{T}_{r+1})}\prod_{i=1}^{r-1}h(\mathcal{T}_i)^{D(\mathcal{T}_i)}\]
\begin{equation}\label{eq:TildePsiLower}\gg\epsilon_m^{m} h(\mathcal{T}_r)^{D(\mathcal{T}_s)+\ldots+D(\mathcal{T}_r)}\prod_{i=1}^{r-1}h(\mathcal{T}_i)^{D(\mathcal{T}_i)},\end{equation}
because $\epsilon_c^{D(\mathcal{T}_r)-L(\mathcal{T}_r)}\epsilon_{m+1}^{L(\mathcal{T}_r)-(m-1)}\gg \epsilon_m$. This contradicts \eqref{eq:errEPsiTildePsi}: considering \eqref{eq:PsitUpper} and \eqref{eq:TildePsiLower} together, we have that \[|E(t)|\gtrsim \left|\widetilde{\Psi}(t)\right|-\left|\Psi(t)\right|\gg \epsilon_m^{m} h(\mathcal{T}_r)^{D(\mathcal{T}_s)+\ldots+D(\mathcal{T}_r)}\prod_{i=1}^{r-1}h(\mathcal{T}_i)^{D(\mathcal{T}_i)}.\] Therefore, $B$ must contain $m$ roots of $\widetilde{\Psi}$. 
\end{proof}

\part{Oscillatory integral estimates}\label{part:oscInt}
In this section, we prove the oscillatory integral estimates given by Theorems \ref{thm:oscIntEst} and \ref{thm:oscIntEstFine}. Let us first present an example which shows why the condition $k_{L-s}\geq s+1$ is required in Theorem \ref{thm:oscIntEstFine}. 

\begin{proposition}\label{prop:oscIntSharpEx}For $k\geq m$, define $\Phi(t)$ by \begin{equation}\label{eq:unifEstExt}\Phi(0)=0\text{ and }\Phi'(t)=y_L(t^{k}-1)^{L-m+1}.\end{equation}
We set $y_1=\ldots y_{m-1}=0$, the remaining $(x,y)$ parameters are defined implictly by $\Phi'(t)=x+\sum_{j=m}^Ly_jt^{(j-m+1)k}$.
For this polynomial, we have that \[\left|\int e^{i\Phi(t)}dt\right|\gtrsim |y_m|^{-\frac{1}{L-m+2}},\]
for $(x,y)$ in a region $R$ containing arbitrarily large $y_m$. In particular, for the estimate \eqref{eq:oscIntEstI} to hold in the region $R$, we require that $k=k_m\geq L-m+1$.
\end{proposition}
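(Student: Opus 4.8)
The plan is to compute the oscillatory integral for this explicit phase directly by a scaling argument, and then observe that the lower bound obtained is incompatible with the claimed estimate \eqref{eq:oscIntEstI} unless $k_m \ge L-m+1$. First I would record what the coefficients are: since $\Phi'(t) = y_L(t^k-1)^{L-m+1}$, expanding by the binomial theorem gives $\Phi'(t) = y_L\sum_{i=0}^{L-m+1}\binom{L-m+1}{i}(-1)^{L-m+1-i}t^{ik}$, so that $x = (-1)^{L-m+1}y_L$ and $y_{m-1+i} = (-1)^{L-m+1-i}\binom{L-m+1}{i}y_L$ for $1\le i\le L-m+1$; in particular $y_m = (-1)^{L-m}(L-m+1)y_L$, so $|y_m| \sim |y_L|$ with constants depending only on $L-m$. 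Thus it suffices to produce the lower bound in terms of $|y_L|$. I would then note that $\Phi(t) = y_L\int_0^t(\tau^k-1)^{L-m+1}d\tau =: y_L \phi(t)$, where $\phi$ is a fixed polynomial independent of the parameters (it depends only on $k$, $L$, $m$).

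The main computation is a stationary-phase-free scaling estimate near the degenerate critical point $t=1$ (a zero of $\Phi'$ of order exactly $L-m+1$). Near $t=1$ one has $\phi(t) - \phi(1) = c(t-1)^{L-m+2} + O((t-1)^{L-m+3})$ with $c = k^{L-m+1}/(L-m+2) \ne 0$. Choosing a small fixed interval $I$ around $t=1$ on which $|\phi(t)-\phi(1)| \le A|t-1|^{L-m+2}$ for a fixed constant $A$, I would restrict the integral to the sub-interval where $|t-1| \le (|y_L|A)^{-1/(L-m+2)}$; on this set $|y_L(\phi(t)-\phi(1))| \le 1$, so $\Re(e^{iy_L\phi(t)}e^{-iy_L\phi(1)}) \ge \cos 1 > 0$, and hence
\[
\left|\int_{\RR} e^{i\Phi(t)}\,dt\right| = \left|\int_{\RR} e^{iy_L\phi(t)}\,dt\right| \gtrsim (|y_L|A)^{-\frac{1}{L-m+2}} \sim |y_m|^{-\frac{1}{L-m+2}},
\]
valid once $|y_L|$ is large enough that the sub-interval lies inside $I$. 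This is the required lower bound, and the region $R$ is simply $\{(x,y): y_L \text{ large}, \text{ coefficients as above}\}$, which visibly contains arbitrarily large $y_m$.

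Finally I would compare with \eqref{eq:oscIntEstI}: that estimate asserts $|I(x,y)| \lesssim \min_j |y_j|^{-1/(k_j+1)} \le |y_m|^{-1/(k_m+1)}$. Combined with the lower bound, this forces $|y_m|^{-1/(L-m+2)} \lesssim |y_m|^{-1/(k_m+1)}$ for arbitrarily large $|y_m|$, which is only possible if $\frac{1}{L-m+2} \ge \frac{1}{k_m+1}$, i.e. $k_m+1 \ge L-m+2$, i.e. $k_m \ge L-m+1$. I expect the main obstacle to be purely bookkeeping: making sure the implied constants in the Taylor expansion of $\phi$ near $t=1$ and in the passage $|y_m|\sim|y_L|$ are genuinely independent of the parameters (they depend only on $k, L, m$, which are fixed), and confirming that the lower bound from the single critical point $t=1$ is not cancelled by contributions elsewhere — here it is cleanest simply to lower-bound by integrating over the explicit positivity sub-interval, so no cancellation argument is needed at all.
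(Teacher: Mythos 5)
Your identification of the coefficients, the scaling $\Phi = y_L\phi$ with $\phi$ a fixed polynomial, the Taylor expansion of $\phi$ to order $L-m+2$ at the critical point $t=1$, and the final comparison with \eqref{eq:oscIntEstI} yielding $k_m\geq L-m+1$ are all correct and match the intended argument (the paper defers the proof to \cite{mythesis}, describing it as ``a standard non-stationary phase analysis''). However, there is a genuine gap at the central step: you cannot lower-bound $\left|\int_{\RR}e^{i\Phi(t)}\,dt\right|$ by the integral over the positivity sub-interval $I=\{|t-1|\leq (A|y_L|)^{-1/(L-m+2)}\}$ alone. The inequality $|\int_{\RR}f|\geq |\int_I f|$ is simply false in general, and the real-part trick $|\int_{\RR}f|\geq \int_{\RR}\Re(e^{-i\Phi(1)}f)$ does not help either, since the integrand is not sign-definite off $I$. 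Your closing remark that ``no cancellation argument is needed at all'' is therefore exactly where the proof breaks: the contribution of $\RR\setminus I$ must be bounded from above, and this is precisely the non-stationary phase analysis the paper alludes to. Concretely, on $\RR\setminus I$ (away also from the other real root of $t^k=1$) one has $|\Phi'(t)|\gtrsim |y_L|\,\mathrm{dist}(t,\{t^k=1\})^{L-m+1}$, and an integration by parts (or first-derivative van der Corput estimate on monotonicity intervals of $\Phi'$) shows the tail contributes $\lesssim A^{-(L-m+1)/(L-m+2)}|y_L|^{-1/(L-m+2)}$, which is dominated by the main term $\sim A^{-1/(L-m+2)}|y_L|^{-1/(L-m+2)}$ once $A$ is chosen large (using $L-m+1>1$; the case $L=m$ needs separate, easier treatment).

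A second, smaller omission: when $k$ is even, $t=-1$ is also a real zero of $\Phi'$ of order $L-m+1$, so a neighbourhood of $t=-1$ contributes a term of the \emph{same} order $|y_L|^{-1/(L-m+2)}$ with a different phase factor $e^{iy_L\phi(-1)}$, and in principle the two main terms could cancel for particular values of $y_L$. You must either restrict to odd $k$, compute both local contributions and check their sum is bounded below (using $\phi(-1)=-\phi(1)\neq\phi(1)$ and choosing $y_L$ in a suitable unbounded set, which is permitted since the proposition only asserts the bound on a region $R$ containing arbitrarily large $y_m$), or otherwise argue the contributions do not conspire. With these two points repaired the argument goes through and is the standard one.
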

\begin{remark}Proposition \ref{prop:oscIntSharpEx} is a direct consequence of the following (equivalent) proposition, which amounts to a change in notation, and our testing the inequality
\[|y_m|^{-\frac{1}{L-m+2}}\lesssim |y_m|^{-\frac{1}{k_m+1}},\]
 as $|y_m|\rightarrow \infty$. The inequality leads to the necessary condition $k_m\geq L-m+1$.
\end{remark}
\begin{proposition*}Here, we set $\tilde{L}=L-m+1$. For $k\geq m$, define $\Phi(t)$ by \begin{equation}\label{eq:unifEstExt2}\Phi(0)=0\text{ and }\Phi'(t)=y_{\tilde{L}}(t^{k}-1)^{l}.\end{equation}
The $(x,y)$ parameters are defined implictly by $\Phi'(t)=x+\sum_{j=1}^{\tilde{L}}y_jt^{jk}$.
For this polynomial, we have that \[\left|\int e^{i\Phi(t)}dt\right|\gtrsim |y_1|^{-\frac{1}{l+1}},\]
for $(x,y)$ in a region $R$ containing arbitrarily large $y_1$. 
\end{proposition*}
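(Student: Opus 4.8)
The plan is to exhibit the region $R$ explicitly and then obtain the lower bound for the oscillatory integral directly by a scaling argument, bypassing the cluster machinery. The cleanest approach is to prescribe the polynomial via $\Phi'(t)=y_{\tilde L}(t^k-1)^l$, so that all of the critical points of $\Phi$ cluster near the $k$th roots of unity: the derivative has $kl$ roots, grouped into $k$ clusters of $l$ repeated roots each, all of size $\sim 1$. I would take $R$ to be the set of coefficient tuples $(x,y)$ arising this way as $y_{\tilde L}$ ranges over, say, $[1,\infty)$, so that $y_1 = \binom{l}{l-1}y_{\tilde L}(-1)^{l-1}$ (up to sign/binomial factor) is comparable to $y_{\tilde L}$ and hence can be taken arbitrarily large; the remaining coefficients $x,y_2,\dots,y_{\tilde L}$ are then determined and all comparable to $y_{\tilde L}$ as well.

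Next I would perform the rescaling $t = |y_{\tilde L}|^{-1/(lk+1)}\sigma$ near a critical point, or more precisely localise the integral to a neighbourhood of one $k$th root of unity, say $t=1$, and write $t = 1 + |y_{\tilde L}|^{-1/(l+1)}\sigma$. Near $t=1$ we have $\Phi'(t) = y_{\tilde L}(t-1)^l\big((t^{k-1}+\dots+1)\big)^l \approx y_{\tilde L} k^l (t-1)^l$, so $\Phi(t) - \Phi(1) \approx \frac{y_{\tilde L}k^l}{l+1}(t-1)^{l+1}$. Under the substitution the phase becomes $\approx \frac{k^l}{l+1}\sigma^{l+1} + (\text{error})$, a quantity that is $O(1)$ on a $\sigma$-interval of length $\sim 1$, provided we can control the contribution of the neglected factor $(t^{k-1}+\dots+1)^l$ and of the Taylor remainder — this is where one checks that on the rescaled interval the phase really is a bounded perturbation of $c\sigma^{l+1}$. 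Then a smooth cutoff $\phi$ supported near $t=1$ gives
\[
\left|\int e^{i\Phi(t)}\phi(t)\,dt\right| = |y_{\tilde L}|^{-\frac{1}{l+1}}\left|\int e^{i(\frac{k^l}{l+1}\sigma^{l+1} + O(|y_{\tilde L}|^{-1/(l+1)}))}\psi(\sigma)\,d\sigma\right| \gtrsim |y_{\tilde L}|^{-\frac{1}{l+1}} \sim |y_1|^{-\frac{1}{l+1}},
\]
since the inner integral converges to a nonzero constant (a Fresnel-type integral of $\sigma^{l+1}$) as $|y_{\tilde L}|\to\infty$.

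To upgrade the localised bound to a bound on the full integral $\int_{\RR}e^{i\Phi(t)}dt$ one would either observe that $\Phi$ is a genuine polynomial phase so van der Corput / the Phong--Stein bound controls the tails $|t|\gg 1$ and the regions between clusters (these contribute $O(|y_{\tilde L}|^{-1/(l+1)})$ or smaller), or — more robustly — prove the statement for $\int e^{i\Phi}\phi\,dt$ with a fixed bump $\phi$ as in the complex statement and note that the sharpness claim only requires this. The main obstacle, and the one deserving genuine care, is the second step: verifying that after rescaling the phase is uniformly (in $y_{\tilde L}$) a small perturbation of the model phase $c\,\sigma^{l+1}$ on an interval of fixed length, so that stationary phase / the Fresnel integral asymptotics apply with a constant bounded below independently of $y_{\tilde L}$. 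Once that is in hand, matching exponents in $|y_1|^{-1/(l+1)} \lesssim |y_1|^{-1/(k_m+1)}$ as $|y_1|\to\infty$ forces $l+1 \le k_m+1$, i.e. $k_m \ge l = L-m+1$, which is the necessary condition claimed (after translating back via $l = \tilde L = L-m+1$ and $k=k_m$).
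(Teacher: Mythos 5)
The paper does not prove this proposition; it defers to the author's thesis with the one-line description ``a standard non-stationary phase analysis,'' and your route --- localise at the degenerate real critical point of $\Phi$, rescale by $|y_{\tilde L}|^{-1/(l+1)}$, reduce to the model phase $c\sigma^{l+1}$, and control the rest by non-stationary phase --- is exactly that standard argument, so in approach you are aligned with what the paper intends. Your identification of the region $R$ (all coefficients comparable to $y_{\tilde L}$, with $y_1=\pm l\,y_{\tilde L}$) and the translation into the necessary condition $k_m\geq L-m+1$ are both correct.

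Two points in your writeup would not survive as stated and need real fixes. First, a lower bound cannot follow from ``main term $\gtrsim|y_{\tilde L}|^{-1/(l+1)}$ plus contributions that are $O(|y_{\tilde L}|^{-1/(l+1)})$ or smaller'': same-order terms can cancel. In fact the dyadic shell $|y_{\tilde L}|^{-1/(l+1)}\ll|t-1|\leq\delta$ around the critical point contributes at \emph{exactly} the order $|y_{\tilde L}|^{-1/(l+1)}$ by the first-derivative test, so the ``non-oscillating core'' heuristic alone is insufficient no matter how the cutoff constants are tuned. The argument must go through the full stationary-phase asymptotic at a critical point of order $l$ (or, equivalently, the convergence of the rescaled integral over its \emph{growing} $\sigma$-support to the improper Fresnel integral $\int_{\RR}e^{ic\sigma^{l+1}}d\sigma$, whose value $\propto\Gamma\bigl(1+\tfrac{1}{l+1}\bigr)$ times a nonzero complex unit must be checked to be nonzero); you gesture at this but your displayed formula has a \emph{fixed} bump $\psi(\sigma)$, for which nonvanishing of $\int e^{ic\sigma^{l+1}}\psi$ is not automatic and needs either a small-support/positive-real-part argument or the asymptotic expansion. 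The genuinely smaller contributions are those of the non-stationary regions, which are $O(|y_{\tilde L}|^{-1})\ll|y_{\tilde L}|^{-1/(l+1)}$ since $l\geq1$. Second, the ``$k$ clusters of $l$ roots each'' are mostly complex and irrelevant to the real integral: the only real critical points are $t=1$ and, for $k$ even, $t=-1$. For even $k$ the two leading stationary-phase contributions carry phases $e^{i\Phi(\pm1)}$ and could cancel for particular $y_{\tilde L}$; you should either take $k$ odd or observe that $\Phi(1)-\Phi(-1)$ is a nonzero multiple of $y_{\tilde L}$, so cancellation is confined to a sparse set and $R$ can be chosen to avoid it while still containing arbitrarily large $y_1$.
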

We refer the reader to \cite{mythesis} for a proof, in which we work via a standard non-stationary phase analysis.

Let us now turn to the proof of the oscillatory integral bounds of Hickman and Wright.
\begin{proof}[Proof of Theorems \ref{thm:oscIntEst} and \ref{thm:oscIntEstFine}]
We first give the proof in the case that $k_m\geq L$ (Theorem \ref{thm:oscIntEst}) and later give the technical case splitting required to obtain the result when $k_m\geq L-m+1$ (Theorem \ref{thm:oscIntEstFine}). 

Without loss of generality, we can prove the result for $\Phi$ as in \eqref{eq:polyPhaseDef} chosen such that $\max_{1\leq j\leq L}{|y_j|}=1$. To see this, suppose we have a phase $\widetilde{\Phi}=\widetilde{\Phi}_{\tilde{x},\tilde{y}}$ of the same form as \eqref{eq:polyPhaseDef},  where $\tilde{y}\in\RR^L$ is unrestricted. We wish to show $|J(\tilde{x},\tilde{y})|\lesssim \min_{j}|\tilde{y}_j|^{-\frac{1}{k_j+1}}$,
where $J(\tilde{x},\tilde{y})=\int e^{i\tilde{\Phi}(s)}ds$, with $\tilde{\Phi}(s)=\tilde{x}s+\frac{\tilde{y}_1}{k_1+1}s^{k_1+1}+\ldots+\frac{\tilde{y}_L}{k_L+1}s^{k_L+1}$.
By making a change of variables in the $s$ coordinate, it suffices to prove $|J(x,y)|\lesssim 1$,
where $J(x,y)=\int e^{i\Phi(s)}ds$, with $\Phi(s)=xs+\frac{y_1}{k_1+1}s^{k_1+1}+\ldots+\frac{y_L}{k_L+1}s^{k_L+1}$ such that $\max_{1\leq j \leq L}|y_j|=1$. Indeed, we set $\sigma =\max_{j}|\tilde{y}_j|^{\frac{1}{k_j+1}}$ and make the change of variables $\sigma s= t$ in the integral expression for $J$. Writing $\widetilde{\Phi}(s)$ in terms of $t$ we see that \[\widetilde{\Phi}(t)=\tilde{x}t+\frac{\tilde{y}_1}{k_1+1}t^{k_1+1}+\ldots+\frac{\tilde{y}_L}{k_L+1}t^{k_L+1}\]
\[=xs+\frac{y_1}{k_1+1}s^{k_1+1}+\ldots+\frac{y_L}{k_L+1}s^{k_L+1},\]
where $x=\sigma^{-1}\tilde{x}$ and $y_j=\sigma^{-(k_j+1)}\tilde{y}_j$. By definition of $\sigma$, $\max_{1\leq j \leq L}|y_j|=1$.

We now use the root structure Theorem \ref{thm:tierStruc} to prove the desired inequality \[\left|J(x,y)\right|=\left|\int e^{i\Phi(t)}dt\right|\lesssim 1.\]

Let us consider the case where $\max_{1\leq j \leq L}|y_j|^{\frac{1}{k_j}}=|y_m|^{\frac{1}{k_m}}=1$. Here, the appropriate cluster estimate to consider is the $k_m$-cluster estimates, namely the bound we desire is obtained with a suitable choice of $\mathcal{C}$ such that $|\mathcal{C}|=k_m$. We work to show that, for any $z_j$, there exists a $k_m$-cluster $\mathcal{C}$ such that \begin{equation}\label{eq:clusterCompProdRoots}\prod_{l\notin \mathcal{C}}|z_j-z_l|\gtrsim |z_1z_2\ldots z_{k_L-k_m}|.\end{equation}Once this has been established we note that \[ |z_1z_2\ldots z_{k_L-k_m}|\gtrsim |S_{k_L-k_m}|=\left| \frac{y_m}{y_L}\right|=\left| \frac{1}{y_L}\right|\] so that we can apply the Phong and Stein estimate, Theorem \ref{thm:stePho}, to establish that \[\left|J(x,y)\right|\lesssim \left(\frac{1}{|y_Lz_1z_2\ldots z_{k_L-k_m}|}\right)^\frac{1}{k_m+1}\lesssim 1.\]

By Theorem \ref{thm:tierStruc}, given a root of $\Phi'$, $z$, there are \emph{at most} $L$ other roots in $B(z,\epsilon|z|)$, where $\epsilon$ is some suitable small constant. For a root $z_j\notin B(z,\epsilon|z|)$, \begin{equation}\label{eq:sizeDiffRoots}|z_j-z|\geq \max\lbrace \epsilon |z|, |z_j|-|z|\rbrace.\end{equation}

We now take an arbitrary root $z$ and construct an appropriate $k_m$-cluster containing $z$. The cluster we will construct will depend on what tier $z$ is in. If there are not enough roots smaller than $z$, then we will just put the smallest $k_m$ roots in the cluster. As we will see, this will necessarily include all those roots in $B(z,\epsilon|z|)$. If there are many roots smaller than $z$, we will choose our cluster to contain all roots in $B(z,\epsilon|z|)$, with the remaining elements taken to be any small roots.

Recall that $D(\mathcal{T}_i)=|\mathcal{T}_i|$. Let $r$ be chosen such that $z\in\mathcal{T}_r$. In the case that $D(\mathcal{T}_{r})+D(\mathcal{T}_{r+1})+\ldots+D(\mathcal{T}_{s})\leq k_m$ then we choose our cluster $\mathcal{C}=\left\lbrace z_{k_L},z_{k_L-1},\ldots, z_{k_L-k_m+1}\right \rbrace\supset \mathcal{T}_{r}\cup\mathcal{T}_{r+1}\cup\ldots\cup\mathcal{T}_{s}$ so that 
\[\prod_{j\notin \mathcal{C}}|z-z_j|\sim |z_{1}z_{2}\ldots z_{k_L-k_m}|.\]

It remains to consider the case that $D(\mathcal{T}_{r})+D(\mathcal{T}_{r+1})+\ldots+D(\mathcal{T}_{s}) > k_m$. After taking roots $\mathcal{C}_r=B(z,\epsilon|z|)\cap \mathcal{T}_r$, any choice of $k_m-|\mathcal{C}_r|$ roots from $ \mathcal{T}_r\cup \ldots \cup  \mathcal{T}_s$ suffices to complete our cluster $\mathcal{C}$. Indeed, we find, by \eqref{eq:sizeDiffRoots}, that
\[\prod_{z_j\notin\mathcal{C}}|z-z_j|\]
\[=\left(\prod_{z_j\in \mathcal{T}_{1}\cup\mathcal{T}_{2}\cup\ldots\cup\mathcal{T}_{r-1}}|z-z_j|\right)\left(\prod_{j\notin \mathcal{C},z_j\in \mathcal{T}_r\cup\mathcal{T}_{r+1}\cup\ldots \cup\mathcal{T}_{s}}|z-z_j|\right)\]
\[\gtrsim_{\epsilon} \left(|z_1z_2\ldots z_{D(\mathcal{T}_1)+\ldots +D(\mathcal{T}_{r-1})}|\right)\left(|z|^{D(\mathcal{T}_{r})+\ldots +D(\mathcal{T}_{s})-k_m}\right)\]
\[\gtrsim |z_1z_2\ldots z_{k_L-k_m}|.\]

For these calculations to be valid we require that there are enough spaces in $\mathcal{C}$ to contain all of those roots in $B(z,\epsilon|z|)$, we require $|\mathcal{C}|=k_m\geq L$, which is true by assumption. 

Now, we consider the case where we can weaken the condition on $k_m$ to $k_m\geq L-m+1$. It is here we apply Theorem \ref{thm:tierStrucFine}. Recall $|y_m|=1$. Set $\delta_0=1$. Since $|y_m|\geq\delta_0$, Theorem \ref{thm:tierStrucFine} applies relative to $|y_m|$ and $\delta_0$ provided that, \[\delta_1 \geq |y_n|^\frac{1}{k_n},\quad \text{for}\quad n>m,\] with a suitable constant $\delta_1=\delta(\delta_0)>0$. However, in general we only have that  $1 \geq |y_n|^\frac{1}{k_n}$ for $n>m$. Nevertheless, we will still be able to apply Theorem \ref{thm:tierStrucFine} by a suitable inductive procedure. Let $m(0)=m$, $\delta_0=1$, and $\delta_1=\delta(\delta_0)$ be as above. Suppose, for induction, that $m(0)<m(1)<\ldots<m(j)$ and $\delta_0,\delta_1,\ldots,\delta_{j}$ have already been defined by the inductive procedure and that $|y_{m(j)}|^{\frac{1}{k_{m(j)}}}>\delta_{j}=\delta(\delta_{j-1})$. There are two cases. In the first case, the coefficients are such that \[|y_n|^{\frac{1}{k_n}}\leq \delta_{j+1},\quad \text{for}\quad n>m(j),\] where $\delta_{j+1}=\delta(\delta_j)$ is such that Theorem \ref{thm:tierStrucFine} applies relative to the coefficient $|y_{m(j)}|>\delta_{j}^{k_{m(j)}}$. If we are in this case, we terminate the inductive procedure. Otherwise, in the second case, there exists some $m(j+1)>m(j)$ such that $|y_{m(j+1)}|^{\frac{1}{k_{m(j+1)}}}>\delta_{j+1}$, and we proceed with the induction. The process must terminate, since there are finitely many coefficients. We can thus apply Theorem \ref{thm:tierStrucFine} relative to some $|y_{m'}|^{\frac{1}{k_{m'}}}>\gamma'$, for some $m'\geq m$, with \[|y_n|^{\frac{1}{k_n}}\leq \delta(\gamma'),\quad \text{for}\quad n>m'.\]

By Theorem \ref{thm:tierStrucFine} we know that at most $L-m'+1\leq L-m+1$ roots can be contained in $B(w, \epsilon|w|)$ for roots $w$ in the large tiers $\mathcal{T}_r$. It is also a consequence of that theorem that there are at most $k_{m'}$ roots in the small tiers, with such roots $|w|\lesssim_{\delta_{m'}}1$. By our assumption, we also know that $k_{m}\geq L-m+1$ so that $k_{m'}\geq L-m'+1$. The above argument thus carries through, constructing appropriate $k_{m'}$-clusters $\mathcal{C}$ such that  \[\prod_{l\notin \mathcal{C}}|z_j-z_l|\gtrsim |z_1z_2\ldots z_{k_L-k_{m'}}|.\]
Once this has been established we note that \[ |z_1z_2\ldots z_{k_L-k_{m'}}|\gtrsim |S_{k_L-k_{m'}}|=\left| \frac{y_{m'}}{y_L}\right|\gtrsim_{\delta_1,\ldots,\delta_{m'-1}}\left| \frac{1}{y_L}\right|.\]
The result follows by applying the Phong-Stein estimate, Theorem \ref{thm:stePho}.
\end{proof}

If we further restrict the region we consider, it is possible to strengthen the oscillatory integral bound \eqref{eq:oscIntEstI}.
\begin{proposition} 
Set \[I(x,y)=\int e^{i\Phi(t)}dt.\]
Then, it is possible to bound \[|I(x,y)|\lesssim \epsilon \min|y_j|^{-\frac{1}{k_j+1}},\]
provided we take $|x|\gg \max|y_j|^{\frac{1}{k_j+1}}$ with a sufficiently large constant depending on $\epsilon$.
\end{proposition}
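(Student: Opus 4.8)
The plan is to reduce, by the rescaling already used in the proof of Theorems~\ref{thm:oscIntEst} and~\ref{thm:oscIntEstFine} (replacing $t$ by $\sigma t$ with $\sigma=\max_{1\le j\le L}|y_j|^{1/(k_j+1)}$), to the normalised situation $\max_{1\le j\le L}|y_j|=1$, in which the hypothesis becomes $|x|\ge M(\epsilon)$ and the conclusion becomes $|I(x,y)|\lesssim\epsilon$ (note $\min_{1\le j\le L}|y_j|^{-1/(k_j+1)}\ge 1$ here). So it suffices to produce a fixed $\delta=\delta(k_1,\dots,k_L)>0$ with $|I(x,y)|\lesssim|x|^{-\delta}$ once $|x|$ exceeds a constant depending only on the exponents; choosing $M(\epsilon)$ so that also $M(\epsilon)^{-\delta}<\epsilon$, and then undoing the scaling, gives the statement.

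To obtain $|I(x,y)|\lesssim|x|^{-\delta}$ I would split $\RR=\{|t|\le c|x|^{1/k_L}\}\cup\{|t|>c|x|^{1/k_L}\}$ with $c$ a small fixed constant. On the inner region, $|x|$ large together with $\max_j|y_j|=1$ force $|\Phi'(t)|=\bigl|x+\sum_j y_jt^{k_j}\bigr|\ge|x|/2$, while $\int_{|t|\le c|x|^{1/k_L}}|\Phi''|\lesssim|x|$; a single integration by parts (non-stationary phase) then bounds this contribution by $\lesssim|x|^{-1}$. Every root $z$ of $\Phi'$ lies in the outer region: from $\Phi'(z)=0$ and $\max_j|y_j|=1$ one gets $|z|\gtrsim(|x|/L)^{1/k_L}$, so for $c$ small enough no root is dropped; a further non-stationary-phase truncation at $|t|$ comparable to a constant times the largest root reduces the outer region, up to an $O(|x|^{-1})$ error, to a bounded annular region on which the Phong--Stein cluster bound, Theorem~\ref{thm:stePho}, applies.

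For that bounded annular region I would run Theorem~\ref{thm:stePho} against the tier structure of Theorem~\ref{thm:tierStruc} and Definition~\ref{def:tierDef}. For each root $z_j$, take $\mathcal{C}_j$ to be the roots lying in $B(z_j,\epsilon|z_j|)$, so $|\mathcal{C}_j|\le L$. For $z_l\notin\mathcal{C}_j$ one has $|z_j-z_l|\gtrsim_\epsilon\max(|z_j|,|z_l|)$, and since $\prod_{\text{all }l}|z_l|=|x/y_L|$ this yields $|y_L|\prod_{l\notin\mathcal{C}_j}|z_j-z_l|\gtrsim_\epsilon|x|\big/\prod_{z_l\in\mathcal{C}_j}|z_l|$. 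One then bounds $\prod_{z_l\in\mathcal{C}_j}|z_l|$ through the tier heights, estimated via Lemma~\ref{lem:heightEstProcedure}: every root has modulus $\gtrsim|x|^{1/k_L}$, and whenever some tier sits far above $|x|^{1/k_L}$ the leading coefficient $|y_L|$ is forced to be correspondingly small, so that in every configuration $|y_L|\prod_{l\notin\mathcal{C}_j}|z_j-z_l|\gtrsim|x|^{\delta'}$ for a fixed $\delta'=\delta'(k_1,\dots,k_L)>0$. Feeding this into Theorem~\ref{thm:stePho} gives $\lesssim|x|^{-\delta'/(L+1)}$ on the annular region, which together with the inner estimate completes the proof with $\delta=\min\{1,\delta'/(L+1)\}$.

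The hard part is exactly this last uniformity: making $|y_L|\prod_{l\notin\mathcal{C}_j}|z_j-z_l|\gtrsim|x|^{\delta'}$ hold simultaneously for all roots $z_j$ and all tier configurations --- in particular the degenerate case where $|y_L|$ is extraordinarily small, i.e.\ a whole tier of roots escapes to height $\gg|x|^{1/k_L}$, so that for those roots the Phong--Stein term a priori decays only in $|y_L|$ rather than in $|x|$. I expect to handle this by the device already used in the proof of Theorem~\ref{thm:oscIntEstFine}: peel the highest tier off (its far field contributes $\lesssim|x|^{-1}$ by non-stationary phase) and induct on the number of terms $L$, exploiting that the presence of such a tier forces a quantitative upper bound on $|y_L|$ in terms of $|x|$, which converts the $|y_L|$-decay back into a genuine power of $|x|$.
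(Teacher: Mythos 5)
Your opening reduction (rescale by $\sigma=\max_j|y_j|^{1/(k_j+1)}$ to reach $\max_j|\tilde y_j|=1$, $|\tilde x|\gg1$) and the decision to feed the root structure into Theorem \ref{thm:stePho} coincide with the paper's. But the proof is not complete: the step you yourself flag as ``the hard part'' --- the uniform lower bound $|y_L|\prod_{l\notin\mathcal{C}_j}|z_j-z_l|\gtrsim|x|^{\delta'}$ --- is the entire content of the proposition, and the mechanism you sketch for the degenerate case rests on a false premise. A tier escaping to height $\gg|x|^{1/k_L}$ does \emph{not} force ``a quantitative upper bound on $|y_L|$ in terms of $|x|$'': these parameters are independent (fix $x$ and $y_1,\dots,y_{L-1}$ and let $y_L\to0$; the top reference height, which by Lemma \ref{lem:heightEstProcedure} is comparable to $|y_{L-\alpha(1)}/y_L|^{1/D(\alpha(1))}$, blows up while $|x|$ is unchanged). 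Likewise, ``peel the highest tier off; its far field contributes $\lesssim|x|^{-1}$ by non-stationary phase'' is not available --- the phase is stationary at the top-tier roots, and discarding a tier does not produce a polynomial with fewer terms to induct on (making that precise is exactly the rough factorisation machinery of Section \ref{sec:nearFact}, which you do not invoke and which is not needed here). The true compensation is not between $|y_L|$ and $|x|$ but between $|y_L|$ and the top-tier heights themselves, via the symmetric-function identities (e.g.\ $|y_L|\,h(\mathcal{T}_1)^{D(\alpha(1))}\sim|y_{L-\alpha(1)}|$): the smallness of $|y_L|$ is cancelled exactly by the largeness of the factors $|z_j-z_l|$ contributed by the high tier, and the residual power of $|x|$ comes from the separate facts that every root satisfies $|z|\gtrsim|x|^{1/k_L}$ and that $k_m\geq k_1\geq2$.

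The paper's proof closes this in one stroke by anchoring to the intermediate symmetric function rather than to $S_{k_L}=\tilde x/\tilde y_L$ as you do. After rescaling, it places all roots at the common height $\eta=|\tilde x/\tilde y_L|^{1/k_L}$ via Lemma \ref{lem:heightEstProcedure}, takes a \emph{singleton} cluster, and estimates $\prod_{j\notin\mathcal{C}}|z-z_j|\sim\eta^{k_L-1}=\eta^{k_m-1}\,\eta^{k_L-k_m}\gtrsim\eta^{k_m-1}|S_{k_L-k_m}(\mathcal{R})|=\eta^{k_m-1}/|\tilde y_L|$, where $m$ is an index with $|\tilde y_m|=1$; hence $|\tilde y_L|\prod_{j\notin\mathcal{C}}|z-z_j|\gtrsim\eta^{k_m-1}\geq|\tilde x|^{(k_m-1)/k_L}$, which exceeds any prescribed constant once the constant in $|x|\gg\max_j|y_j|^{1/(k_j+1)}$ is large enough. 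Your route, by contrast, divides $|x/y_L|$ by $\prod_{z_l\in\mathcal{C}_j}|z_l|$, which is what manufactures the apparent degeneracy when the cluster sits in a very high tier; switching the anchor to $S_{k_L-k_m}=\tilde y_m/\tilde y_L$ removes it. Two smaller points: the inner/outer decomposition buys you nothing, since Theorem \ref{thm:stePho} already bounds the integral over all of $\RR$ and its right-hand side requires control at every root regardless of where it lies; and if you prefer to avoid the single-tier reduction (whose hypothesis --- that the constant coefficient dominates every step of the height-estimation procedure --- deserves a check rather than being assumed), you must still carry out the tier-by-tier bookkeeping with the identities above, which is precisely the uniformity your proposal leaves open.
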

\begin{proof}
We rescale as we have done previously, setting $s=\sigma t$ where $\sigma=\max_j|y_j|^\frac{1}{k_j+1}$ and $\tilde{y_j}=\sigma^{-(k_j+1)}y_j$. Note that, after rescaling, $|\tilde{x}|\gg 1$ by our assumption. It suffices for us to prove that \[\left|J(\tilde{x},\tilde{y})\right|=\left|\int e^{i\widetilde{\Phi}(s)}ds\right|\lesssim \epsilon,\]
provided $|\tilde{x}|\gg 1$ with a large enough constant.

By Lemma \ref{lem:heightEstProcedure}, we know that all roots of $\widetilde{\Phi}'$ are comparable to \[\eta=\left|\frac{\tilde{x}}{\tilde{y}_L}\right|^\frac{1}{k_L}.\]

Given a root $z$, we can construct a singleton cluster such that \[\prod_{j\notin \mathcal{C}}|z-z_j|\sim \eta^{k_L-1}\gg \eta^{k_L-k_m}\gtrsim|S_{k_L-k_m}|=\left|\frac{1}{\tilde{y}_L}\right|.\]
Applying the Phong-Stein bound then gives the required result.
\end{proof}

\bibliographystyle{plain}
\bibliography{rtStrREFS}
\end{document}